\newtheorem{theo}{Theorem}[section] 
\newtheorem{lem}{Lemma}[section]
\def\build#1_#2^#3{\mathrel{
\mathop{\kern 0pt#1}\limits_{#2}^{#3}}}
\def\Annexe{{\large Annexe}}
\newcommand{\bqa}{\begin{eqnarray}}
\newcommand{\eqa}{\end{eqnarray}}
\newcommand{\bdesc}{\begin{description}}
\newcommand{\edesc}{\end{description}}
\newcommand{\bqan}{\begin{eqnarray*}}
\newcommand{\eqan}{\end{eqnarray*}}
\def\norm#1{\left\| #1 \right\|}
\def\abs#1{\left| #1 \right|}
\def\pa#1{\left( #1 \right)}
\def\cro#1{\left[ #1 \right]}
\def\acco#1{\left \{  #1\right \}}
\def\ndx2{n\,d_x^2}
\def\bkF{\mbox{{\rm I\kern-.17em F}}}
\def\bkNp{\mbox{{\small\rm I\kern-.20em N}}}
\def\bkO{{\rm \kern.24em
            \vrule width.05em height1.4ex depth-.05ex
            \kern-.26em O}}
\def\bkZ{\mbox{{\rm Z\kern-.32em Z}}}
\def\non{\mbox{$^{\mbox{\bf --}}$\kern-.1em \vrule width.05em height1.5ex}}
\def\emptysq{\mathbin{\vbox{\hrule\hbox{\vrule height1ex \kern.5em 
       \vrule height1ex}\hrule}}}
\newtheorem{assumption}{Assumption}
\begin{document}

\footnotetext[1]{antoine.godichon$\_$baggioni@upmc.fr,   
	  Laboratoire de Probabilités, Statistique et Modélisation (LPSM),
	Sorbonne Université,
	4 Place Jussieu, 75005 Paris, France.}
\footnotetext[2]{Laboratoire de Mathématiques de l'INSA Rouen Normandie,
	INSA Rouen Normandie,
	BP 08 - Avenue de l'Université, 76800 Saint-Etienne du Rouvray, France}

\date{}
\title{A Full Adagrad algorithm with $O(Nd)$ operations}

\author{ Antoine Godichon-Baggioni   \footnotemark[1],	 
  Wei Lu\footnotemark[2] and 
	  Bruno Portier\footnotemark[2]
}

\maketitle

\begin{abstract}
A novel approach is given to overcome the computational challenges of the full-matrix Adaptive Gradient algorithm (Full AdaGrad) in stochastic optimization. By developing a recursive method that estimates the inverse of the square root of the covariance of the gradient, alongside a streaming variant for parameter updates, the study offers efficient and practical algorithms for large-scale applications. This innovative strategy significantly reduces the complexity and resource demands typically associated with full-matrix methods, enabling more effective optimization processes. Moreover, the convergence rates of the proposed estimators and their asymptotic efficiency are given. Their effectiveness is demonstrated through numerical studies.
\end{abstract}

\medskip
\textbf{Keywords: }
Stochastic Optimization; Robbins-Monro algorithm; AdaGrad; Online estimation

\section{Introduction}\label{sec::intro}

Stochastic optimization plays a crucial role in machine learning and data science, particularly relevant in the context of high-dimensional data \citep{genevay2016stochastic,bottou2018optimization,sun2019survey}. 
This paper focuses on the stochastic gradient-based methods.
It targets on a scalar objective function $f(X,\theta)$, where $X$ is a random variable taking values in a measurable space $\mathcal{X}$  and $\theta$ is a parameter vector in $\mathbb{R}^d$.
This function is assumed to be differentiable with respect to $\theta$. 
Our goal is to minimize the expected value of this function, 
denoted as $F(\theta):=\mathbb{E}\cro{f(X,\theta)}$, in relation to $\theta$. 
The realizations of $X$ at different time steps are denoted as $X_1, \cdots,X_t, \cdots$, 
and $g_t(\theta):=\nabla_\theta f(X_t,\theta)$ refers to the gradient of $f(X_t,\cdot)$.

A popular approach in addressing this problem of optimization is Stochastic Gradient Descent(SGD), introduced by \cite{robbins1951stochastic}. 
It  recursively updates the parameter estimate based on the last estimate of the gradient, i.e.
$$\theta_{t} = \theta_{t-1} - \nu_tg_t(\theta_{t-1}),$$
where $\nu_t$ is the learning rate and $\theta_0$ is arbitrarily chosen. 
Despite its computational efficiency and favorable convergence properties, 
SGD faces limitations, particularly in adapting the learning rate to the varying scales of features \citep{ruder2016overview}.

To address these limitations, many extensions of SGD have been proposed
. A widely used variant is the Adaptive Gradient algorithm (AdaGrad) introduced by \cite{duchi2011adaptive}. 
It adapts the learning rate for each parameter, offering improved performances on problems with sparse gradients. 
The full-matrix version of AdaGrad can be expressed as follows:
$$\theta_{t}= \theta_{t-1}-\nu_t\mathcal{G}_t^{-1/2}g_t(\theta_{t-1}),$$
where $\mathcal{G}_t:= \sum^t_{k=1}g_k(\theta_{k-1})g_k(\theta_{k-1})^T$ 
is a recursive estimate of the covariance matrix of the gradient 
and $\mathcal{G}_t^{-1/2}$ is the inverse of the square root of it. 
{
However, one major challenge of AdaGrad is that for every new data point, it is necessary to compute the square root of the inverse of $G_t$ in an online manner, which is computationally heavy.
}
This computation is particularly demanding in terms of computational resources, with a complexity of order $\mathcal{O}(d^3)$. 
Such complexity is often prohibitive, especially in scenarios involving -dimensional data. 
To deal with it, a diagonal version of AdaGrad was proposed, simplifying the process by using only the diagonal elements of $\mathcal{G}_t$, i.e
\begin{equation}\label{DefAdaDiag}
	\theta_{t}= \theta_{t-1}-\nu_t \text{diag}\pa{\mathcal{G}_t}^{-1/2}g_t(\theta_{t-1}).
\end{equation}
In practice, this approach is more feasible and  is broadly applied for machine learning tasks \citep{dean2012large,seide20141,smith2017cyclical}. 
Furthermore, \cite{defossez2022a}  establishes the standard convergence rate for Adagrad in the non convex case.
Despite being more practical, the diagonal version of AdaGrad inherently loses information compared to the full-matrix version, especially in the case where the gradient have coordinates highly correlated.  
\vspace{1ex}

Our work focuses on the full-matrix version of AdaGrad, 
proposing a recursive method to estimate the inverse of the square root of the covariance matrix $\Sigma := \mathbb{E}\cro{\nabla_\theta f(X,\theta^*)\nabla_\theta f(X,\theta^*)^T},$ 
where $\theta^*$ minimizes the function $F$. 
Unlike the original Full AdaGrad, which uses $G_t$ to estimate $\Sigma$ and then computes $G_t^{-1/2}$, 
we will directly estimate $\Sigma^{-1/2}$.
Using the fact that 
\[
\Sigma^{-1/2}\Sigma\Sigma^{-1/2} - I_d = \mathbb{E}\cro{\Sigma^{-1/2}\nabla_\theta f(X,\theta^*)\nabla_\theta f(X,\theta^*)^T \Sigma^{-1/2} - I_{d}} = 0,
\]
we introduce a Robbins-Monro algorithm to 
{recursively (or online)}
 estimate $\Sigma^{-1/2}$. 
This estimator,  denoted as $A_t$, is defined for all $t\geq 1$, by:
\begin{align*}
	A_{t} = A_{t-1}-\gamma_{t}\pa{A_{t-1}g_t(\theta_{t-1})g_t(\theta_{t-1})^TA_{t-1}-I_d}, \label{def::AtSansIndication}
\end{align*}
where $A_0 = I_d$ 
and $(\gamma_t)_{t \geq 1}$ is a sequence of positive real numbers, decreasing towards 0. 
This estimate is used in updating the estimate of $\theta$:
\begin{align*}
	\theta_{t} = \theta_{t-1}-\nu_{t}A_{t-1}g_t(\theta_{t-1}).
\end{align*}
Consequently, this approach enables us to avoid the expensive computation of the square root of the inverse of $\mathcal{G}_t$, enhancing the computational efficiency of the algorithm. 
Nevertheless, $A_{t}$ is not necessarily positive definite, 
and we so propose a slight modification in this sense. 
In addition, $\theta_{t}$ cannot be asymptotically efficient, 
and we so introduced its (weighted) averaged version \citep{polyak1992acceleration,pelletier2000asymptotic,mokkadem2011generalization,boyer2023asymptotic}.

Although the propose approach to estimate $\Sigma^{-1/2}$ enables to reduce the calculus time, this only enables to achieve a total complexity of order $O(Nd^{2})$, where $N$ is the sample size. 
Then, we propose a Streaming {(also called online mini-batch)} version of our algorithm, updating the estimate of $\Sigma^{-1/2}$ and $\theta$ only after observing every $n$ gradients and using their average. 
This approach further reduces the algorithm's complexity, making it more practical for large-scale applications. 
More precisely, a good choice of $n$ ($n=d$ for instance) enables to obtain asymptotically efficient estimates with a complexity of order $O(Nd)$, i.e. with the same complexity as for Adagrad algorithm.
\vspace{1ex}

The paper is organized as follows. 
The general framework is introduced in   Section \ref{sec::framewk}. 
In Section \ref{sec::FullNonStr}, we present a detailed description of the proposed Averaged Full AdaGrad algorithm before establishing  its asymptotic efficiency. 
Following this, we introduce a streaming variant of the Full AdaGrad algorithm in \ref{sec::FullStreaming} and we obtain the asymptotic efficiency of the proposed estimates. In Section \ref{sec::simu}, we illustrate the practical applicability of our algorithms through numerical studies. The proofs are postponed in Section \ref{proof}.


\section{Framework}\label{sec::framewk}
Let us recall that the aim is to minimize the functional $F : \mathbb{R}^d \longrightarrow \mathbb{R}$ defined for all $\theta \in \mathbb{R}^d$ by:
$$F(\theta):=\mathbb{E}\cro{f(X,\theta)}, $$
where $f: \mathcal{X} \times \mathbb{R}^d \to \mathbb{R}$.
In all the sequel, we suppose that the following assumptions are fulfilled:
\begin{assumption}\label{ass1}
{The function $F$ is convex, twice continuously differentiable, there is $\theta^* \in \mathbb{R}^d$ such that $\nabla F(\theta^*)=0$, and is locally strongly convex, i.e $\nabla^{2}G(\theta^{*})$ is positive.} 
\end{assumption}
This assumption ensures that $\theta^{*}$ is the unique minimizer of the functional $F$ and legitimates the use of gradient-type methods.
 
\begin{assumption}\label{ass::moment}
	There exists an integer $p \ge 1$ and a positive constant $C_p < \infty$ such that for all $\theta \in \mathbb{R}^d$
	\[
	\mathbb{E}\left[ \left\|\nabla_{\theta}f(X,\theta)\right\|^{2p} \right] \leq C_{p} + C_p \left( F(\theta)- F(\theta^{*}) \right)^{p}.
	\]
\end{assumption}
In the literature on stochastic gradient algorithms, 
it is common to consider moments of order 2 ($p=1$) or 4 ($p=2$) for the gradient of $f$ (see, e.g., \cite{pelletier1998almost,pelletier2000asymptotic}).
However,  due to some hyperparameters within our algorithm,
we must strongly constraint the moment order of the gradient of $f$ when determining the convergence rate of our estimates.
The specific value of $p$ will be delineated in the theorem statements. {
Observe that one could consider the following alternative assumption and all the stated results remain true:
there are positive constant $C_{1},C_{p},C_{M}$ such that for all $\theta \in \mathbb{R}^{d}$,
\[
\mathbb{E} \left[ \left\|  \nabla_{\theta}f\left( X, \theta \right) \right\|^{2} \right] \leq C_{1} + C_{1} \left( F(\theta) - F(\theta^{*} ) \right) \quad \quad \text{and} \quad \quad \mathbb{E} \left[ \left\| \nabla_{\theta}f \left( X, \theta \right) \right\|^{2p}\mathbf{1}_{\left\| \theta - \theta^{*} \right\| \leq C_M } \right] \leq C_{p} .
\]
}
\begin{assumption}\label{ass::sigma::lip}
	The function $\Sigma: \theta \longmapsto \mathbb{E}\left[  \nabla_{\theta}f(X,\theta) \nabla_{\theta}f(X,\theta)^{T} \right]$ is $L_{\Sigma}$-Lipschitz and $\Sigma \left( \theta^{*} \right)$ is positive.
\end{assumption}
This assumption is quite specific to our work on FullAdagrad as it ensures the convergence of estimates of the  variance, and more specifically in our case, of the square root of their inverse. It is worth noting that this assumption is quite common in the literature, particularly when considering the estimation of asymptotic covariance \citep{zhu2023online,GBL2024}.

The above are assumptions regarding the first-order derivatives of $F$. 
Next, we present some necessary assumptions concerning the second-order derivatives of the function.

\begin{assumption}\label{ass::hess}
The Hessian of $F$ is uniformly bounded by $L_{\nabla F}$.
\end{assumption}

This assumption ensures that the gradient of $F$ is $L_{\nabla F}$-Lipschitz which is crucial to obtain the consistency of the estimates (via a Taylor's expansion of the gradient at order $2$).

\begin{assumption}\label{ass::hess::lip}
The Hessian of $F$ is Locally Lipschitz: there exists $\eta>0  $ and $L_{\eta}>0$ such that for all $\theta \in \mathcal{B}\left( \theta^{*} , \eta \right)$,
\[
\left\| \nabla F(\theta) - \nabla^{2}F \left( \theta^{*} \right) \left( \theta - \theta^{*} \right) \right\| \leq L_{\eta} \left\| \theta - \theta^{*} \right\|^{2}.
\]
\end{assumption}
These assumptions are close to those found in the literature \citep{pelletier2000asymptotic,GADAT2023312,boyer2023asymptotic}. 
The main differences come from Assumption \ref{ass::moment} and \ref{ass::sigma::lip}. These last ones are crucial for the theoretical study of the estimates of $\Sigma^{-1/2}$, i.e. to prove their strong consistency.

\section{A Full AdaGrad algorithm with  $\mathcal{O}(td^{2})$ operations}\label{sec::FullNonStr}
In this section, we introduce a Full AdaGrad algorithm with  $\mathcal{O}(td^{2})$ operations. We focus on recursively estimating $\Sigma^{-1/2}$ using a Robbins-Monro algorithm, in order to refine estimates of $\theta^*$ while ensuring computational performance. 

\subsection{Estimating $\Sigma^{-1/2}$ with the help of a Robbins-Monro algorithm}
First, we focus on  recursive estimates of the matrix $\Sigma^{-1/2}$.  In all the sequel, let  
 $X_1,\ldots , X_t,\ldots$ be  i.i.d. copies of $X$ 
and for all $\theta \in \mathbb{R}^d$, we denote $g_t(\theta):=\nabla_\theta f(X_t,\theta)$. 
Let us recall that the Robbins-Monro algorithm  for estimating $\Sigma^{-1/2}$, described in the Introduction, is defined recursively for all $t \geq 0$ by
$$	A_{t+1} = A_{t}-\gamma_{t+1}\pa{A_{t}g_{t+1}(\theta_{t})g_{t+1}(\theta_{t})^TA_{t}-I_d}, $$
where $A_0$ is a symmetric positive definite matrix, $(\theta_{t})_{t\ge0}$ is a sequence of estimates of $\theta^*$, and $\gamma_{t}=c_\gamma t^{-\gamma}$ with $c_\gamma >0$ and $1/2<\gamma<1$.
Observe that $A_{t+1}g_t(\theta_{t})$ is a vector, implying that the complexity of this operation is of order $O(d^2)$. However, we cannot ensure that the matrix $A_t$ is always positive definite. 
Nevertheless, in Full AdaGrad, $A_t$ must always be positive to guarantee that at each step, we go in the direction of the gradient (in average). 
To address this issue, we propose a slightly modified version of $A_t$  defined for all $t \geq 0$ by
\begin{equation*}
	A_{t+1} = A_t - \gamma_{t+1}\pa{A_tg_{t+1}\pa{\theta_{t}}g_{t+1}\pa{\theta_{t}}^TA_t -Id}\mathbf{1}_{\acco{g_{t+1}\pa{\theta_{t}}^TA_tg_{t+1}\pa{\theta_{t}}\le\beta_{t+1}}},
\end{equation*}
where $\beta_t = c_\beta t^\beta$ with $0<\beta<1/2$ and $0<c_\beta c_\gamma<1$. 
In fact, $g_{t+1}(\theta_{t})^T A_t g_{t+1}(\theta_{t})$ is the unique positive eigenvalue of the rank-1 matrix $A_t g_{t+1}(\theta_{t}) g_{t+1}(\theta_{t})^T$. 
We update $A_t$ only when this value is not excessively large and thanks to this modification, $A_t$ is positive definite for any $t \geq 0$.

 \subsection{Full AdaGrad algorithms with  $\mathcal{O}(td^{2})$ operations}
We can now propose a Full AdaGrad algorithm defined for all $t \geq 0$ by
\begin{align}
	\theta_{t+1} &= \theta_{t} - \nu_{t+1}A_tg_{t+1}\pa{\theta_{t}} \label{thetaTNoWA}, \\
	A_{t+1} &= A_t - \gamma_{t+1}\pa{  A_tg_{t+1}\pa{\theta_{t}}g_{t+1}\pa{\theta_{t}}^TA_t -Id}\mathbf{1}_{\acco{g_{t+1}\pa{\theta_{t}}^TA_tg_{t+1}\pa{\theta_{t}}\le\beta_{t+1}}}, \label{ATNoWA}
\end{align}
where $\theta_0$ is arbitrarily chosen.
Although our numerical studies show that this algorithm performs well (see Section \ref{sec::simu}), the obtained estimates are not asymptotically efficient. 
Therefore, to ensure  the asymptotic optimality  of the estimates, and to enhance the performance of the algorithm  in practice, we  follow  the idea of \cite{mokkadem2011generalization,boyer2023asymptotic}. 
More precisely,  we introduce the Weighted Averaged Full AdaGrad (WAFA for short) defined recursively  for all $t \geq 0$ by

\begin{align}
	\theta_{t+1} &= \theta_{t} - \nu_{t+1}{\hat{A_t}}g_{t+1}\pa{\theta_{t}} \label{thetaT} \\ 
	\theta_{t+1,\tau} &= \pa{1-\frac{\ln (t+1)^{\tau}}{\sum_{k=0}^t \ln (k+1)^{\tau}}}\theta_{t,\tau}+\frac{\ln (t+1)^{\tau}}{\sum_{k=0}^t \ln (k+1)^{\tau}}\theta_{t+1} \label{thetaTAU}\\ 
	A_{t+1} &= A_t - \gamma_{t+1}\pa{  A_tg_{t+1}\pa{\theta_{t,\tau}}g_{t+1}\pa{\theta_{t,\tau}}^TA_t -Id}\mathbf{1}_{\acco{g_{t+1}\pa{\theta_{t,\tau}}^TA_tg_{t+1}\pa{\theta_{t,\tau}}\le\beta_{t+1}}} \label{AT}\\
	A_{t+1,\tau '} &= \pa{1-\frac{\ln (t+1)^{\tau'}}{\sum_{k=0}^t \ln (k+1)^{\tau'}}}A_{t,\tau'}+\frac{\ln (t+1)^{\tau'}}{\sum_{k=0}^t \ln (k+1)^{\tau'}}A_{t+1} \label{ATAU}
\end{align}
with $\theta_{0,\tau}=\theta_0$, $A_{0,\tau'}=A_0$ and $\tau,\tau' \geq 0$. {{Note that one can take \(\hat{A}_t = A_t\) or \(A_{t,\tau}\). The proofs are provided only for the case \(\hat{A}_t = A_{t,\tau}\), but the other case follows straightforwardly. Using the weighted average estimates \(A_{t,\tau}\) should help accelerate convergence, though we do not establish this result formally.
}} Finally, observe that when $\tau, \tau'=0$, we obtain the usual averaged estimates. However, taking both  greater than zero allows to place more weight on the recent estimations, which are supposed to be better.   The following theorem gives the strong consistency of the Full Adagrad estimates of $\theta^*$.

\begin{theo}\label{theo::consistency}
Suppose Assumptions \ref{ass1}, \ref{ass::moment}   and \ref{ass::hess} hold. Suppose also that $ 2\gamma +2\nu > 3$ and $\nu + \beta < 1$.
 Then  $\theta_{t}$ and $\theta_{t,\tau}$ defined by \eqref{thetaT} and \eqref{thetaTAU} converge almost surely to $\theta^{*}$.
\end{theo}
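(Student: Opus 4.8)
The plan is to treat the coupled system $(\theta_t,\theta_{t,\tau},A_t,A_{t,\tau})$ in two stages: first establish that the matrix recursion $(A_t)$ stays well-behaved (in particular bounded and bounded away from $0$ in the sense of positive definiteness) along almost every trajectory, \emph{independently} of whether $\theta_t$ has converged yet, and then feed this into a Robbins–Monro analysis of the $\theta$-recursion \eqref{thetaT}. The key structural observation, already noted after \eqref{ATNoWA}, is that the truncation indicator $\mathbf{1}_{\{g_{t+1}(\theta_{t,\tau})^T A_t g_{t+1}(\theta_{t,\tau}) \le \beta_{t+1}\}}$ forces each rank-one update to move the unique positive eigenvalue of $A_t g g^T$ by at most $\gamma_{t+1}\beta_{t+1} = c_\gamma c_\beta\, t^{\beta-\gamma}$, and since $c_\gamma c_\beta < 1$ and $\gamma>\beta$ this keeps $A_{t+1}$ symmetric positive definite for all $t$; one also gets a deterministic-type upper bound $\lambda_{\max}(A_t) \le \lambda_{\max}(A_0) + c_\gamma c_\beta \sum_{k\le t} k^{\beta-\gamma}\cdot(\text{something summable or slowly growing})$, so $\lambda_{\max}(A_t)$ grows at most polynomially, and combined with a lower-bound argument on $\lambda_{\min}$ one controls $\|A_t\|$ and $\|A_t^{-1}\|$. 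This is the part I expect to be the main obstacle, because the $A_t$ recursion is driven by gradients evaluated at $\theta_{t,\tau}$, which need not be near $\theta^*$ during a transient phase, so Assumption~\ref{ass::moment} must be invoked in its general (non-localized) form together with the descent properties of the $\theta$-recursion to get a usable moment bound on $g_{t+1}(\theta_{t,\tau})$; this is exactly why the hypotheses $2\gamma+2\nu>3$ and $\nu+\beta<1$ appear.

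Granting control of $(A_t)$, the $\theta$-recursion \eqref{thetaT} reads $\theta_{t+1} = \theta_t - \nu_{t+1} \hat A_t g_{t+1}(\theta_t)$, which is a Robbins–Monro scheme with a (random, time-varying) conditioning matrix $\hat A_t$ that is positive definite with controlled extreme eigenvalues. The standard route is to pick the Lyapunov function $V(\theta) = \tfrac12\|\theta-\theta^*\|^2$ (or $V = F(\theta)-F(\theta^*)$, using Assumption~\ref{ass::hess} and convexity) and compute
\[
\EE{V(\theta_{t+1}) \mid \mathcal{F}_t} \le V(\theta_t) - \nu_{t+1}\langle \nabla F(\theta_t), \hat A_t \nabla F(\theta_t)\rangle + \nu_{t+1}^2 \|\hat A_t\|^2 \EE{\|g_{t+1}(\theta_t)\|^2 \mid \mathcal{F}_t},
\]
where the cross term uses $\EE{g_{t+1}(\theta_t)\mid\mathcal{F}_t} = \nabla F(\theta_t)$ and $\hat A_t$ being $\mathcal{F}_t$-measurable. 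The descent term is $\ge \nu_{t+1}\lambda_{\min}(\hat A_t)\|\nabla F(\theta_t)\|^2$, which by local strong convexity (Assumption~\ref{ass1}) dominates near $\theta^*$; the remainder term is handled by Assumption~\ref{ass::moment} with $p=1$, which bounds $\EE{\|g_{t+1}(\theta_t)\|^2\mid\mathcal{F}_t}$ by $C_1 + C_1(F(\theta_t)-F(\theta^*))$, hence by $C_1 + C_1 L_{\nabla F} V(\theta_t)$. The growth of $\|\hat A_t\|$ is at most a slowly varying (logarithmic, from the weighted averaging) or small polynomial factor; combined with $\sum \nu_t^2 \|\hat A_t\|^2 < \infty$ — which is where $\nu+\beta<1$ together with $\gamma$ large enough is used to guarantee the noise series is summable — the Robbins–Siegmund quasi-martingale theorem yields that $V(\theta_t)$ converges a.s.\ and $\sum_t \nu_{t+1}\lambda_{\min}(\hat A_t)\|\nabla F(\theta_t)\|^2 < \infty$ a.s.

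The final step is to upgrade ``$V(\theta_t)$ converges and $\sum \nu_{t+1}\lambda_{\min}(\hat A_t)\|\nabla F(\theta_t)\|^2<\infty$'' to ``$\theta_t \to \theta^*$''. Since $\lambda_{\min}(\hat A_t)$ is bounded below by a (possibly decaying) positive quantity and $\sum \nu_t = \infty$, the summability forces $\liminf \|\nabla F(\theta_t)\| = 0$ along a subsequence; because $V(\theta_t)$ converges and $\nabla F$ vanishes only at $\theta^*$ (global convexity plus $\nabla F(\theta^*)=0$), the limit of $V$ must be $0$, i.e.\ $\theta_t \to \theta^*$ a.s. For $\theta_{t,\tau}$ defined by \eqref{thetaTAU}, note it is a convex combination (a weighted Cesàro-type average with weights $\ln(t+1)^\tau / \sum_{k\le t}\ln(k+1)^\tau$, which sum to one and whose individual masses tend to zero) of the $\theta_{k+1}$; any such average of an a.s.\ convergent sequence converges a.s.\ to the same limit (a Toeplitz/Kronecker lemma argument), so $\theta_{t,\tau}\to\theta^*$ as well. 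I would present stage one (the $A_t$ bounds) as a separate lemma — it is the technically delicate piece and is reused for the averaged matrix $A_{t,\tau}$ via the same Toeplitz argument — and keep stages two and three as a fairly standard Robbins–Monro wrap-up.
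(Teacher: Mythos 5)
Your high-level architecture — first control the stepsize matrix $A_t$, then run a Robbins–Monro/Robbins–Siegmund analysis on the $\theta$-recursion with $V = F(\theta_t)-F(\theta^*)$, and finally pass to the averaged estimate by a Toeplitz argument — matches the paper, which for the $\theta$-part appeals to a consistency theorem from \cite{GBW2023OND} that is proved exactly along the lines of your second and third stages. However, you misdiagnose where the difficulty lies, and that misdiagnosis would send the argument in circles if executed as written.

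You say the eigenvalue control of $A_t$ is ``the main obstacle'' and that ``Assumption~\ref{ass::moment} must be invoked \ldots together with the descent properties of the $\theta$-recursion to get a usable moment bound on $g_{t+1}(\theta_{t,\tau})$''. This is not the case: the point of the truncation indicator $\mathbf{1}_{\{g_{t+1}(\theta_{t,\tau})^T A_t g_{t+1}(\theta_{t,\tau}) \le \beta_{t+1}\}}$ is precisely that it makes the eigenvalue bounds \emph{fully deterministic}, requiring neither Assumption~\ref{ass::moment} nor any knowledge of $\theta_t$. For the largest eigenvalue, $A_t\widehat{\Sigma}_{t+1}A_t$ is positive semidefinite, so regardless of how large the gradient is one has $\lambda_{\max}(A_{t+1}) \le \lambda_{\max}(A_t) + \gamma_{t+1}$, hence $\lambda_{\max}(A_t) = O(\sum_{k\le t}\gamma_k) = O(t^{1-\gamma})$ (your guessed increment $c_\gamma c_\beta k^{\beta-\gamma}$ is not the right one here). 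For the smallest eigenvalue, the factorization $A_{t+1} = A_t^{1/2}(I_d - \gamma_{t+1}A_t^{1/2}\widehat{\Sigma}_{t+1}A_t^{1/2}\mathbf{1})A_t^{1/2} + \gamma_{t+1}I_d\mathbf{1}$ and the fact that the truncated rank-one matrix has operator norm $\le\beta_{t+1}$ give the inductive bound $\lambda_{\min}(A_{t+1}) \ge \lambda_{\min}(A_t)(1-\gamma_{t+1}\beta_{t+1}) + \gamma_{t+1}\mathbf{1}$, from which $\lambda_{\min}(A_t) \ge \lambda_0/\beta_{t+1}$ follows by induction with $\lambda_0 = \min\{1,\lambda_{\min}(A_0)\beta_1^{-1}\}$. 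No moment on the gradient is used in either bound.

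This matters for more than bookkeeping: if the eigenvalue bounds really did depend on descent properties of $\theta_t$, and the $\theta$-descent analysis in turn depends (as you correctly set it up) on the eigenvalue bounds for $\hat A_t$, one would have a circular dependence with no clean entry point. The paper breaks this by making the $A_t$ control unconditional. Assumption~\ref{ass::moment} (with $p=1$) and the constraints $2\gamma+2\nu>3$, $\nu+\beta<1$ enter only in the second stage — the Robbins–Siegmund analysis of the $\theta$-recursion — exactly where you use them to bound $\mathbb{E}[\|g_{t+1}(\theta_t)\|^2\mid\mathcal{F}_t]$ and to make $\sum\nu_t^2\lambda_{\max}(A_{t,\tau'})^2<\infty$ while $\sum\nu_t\lambda_{\min}(A_{t,\tau'}) = \infty$. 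The rest of your proposal (the Lyapunov computation, the local Polyak–Łojasiewicz lower bound on the descent term, and the Toeplitz/Cesàro step for $\theta_{t,\tau}$) is in line with the paper.
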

The proof is given in Section \ref{proof}. 
The hyperparameters constraints introduced here are for technical reasons. These conditions are not necessary in practice (see Section \ref{sec::simu}).
In the following theorem, we establish the strong consistency of the estimates of $\Sigma^{-1}$ and the almost sure convergence rates of the estimates of $\theta^*$. 
\begin{theo}\label{theo::rate}
Suppose Assumptions \ref{ass1}, \ref{ass::sigma::lip}    and \ref{ass::hess}  hold as well as \ref{ass::moment} with $p> \max \left\lbrace  \frac{8-8\gamma}{\gamma + \beta -1} , 2\left( \frac{1}{\gamma}-1 \right) \right\rbrace$. Suppose also that $ 2\gamma +2\nu > 3$, $\nu + \beta < 1$, $2\gamma -2\beta > 1$ and that $\gamma + \beta > 1$.  Then
\[
A_{t} \xrightarrow[t\to + \infty]{a.s} \Sigma^{-1/2}  \quad \quad \text{ and } \quad \quad A_{t,\tau '} \xrightarrow[t\to + \infty]{a.s} \Sigma^{-1/2} 
\]
In addition, 
\[
\left\| \theta_{t} - \theta^{*} \right\|^{2} = O \left( \frac{\ln t}{t^{\nu}} \right) \quad a.s.  \quad \quad \text{ and } \quad \quad \left\| \theta_{t,\tau} - \theta^{*} \right\|^{2} = O \left( \frac{\ln t}{t^{\nu}} \right) \quad a.s.
\]
\end{theo}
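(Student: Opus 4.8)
The strategy is to treat the two coupled recursions in order: first establish the almost sure convergence of $A_t$ (and then of its weighted average $A_{t,\tau'}$) to $\Sigma^{-1/2}$, and only afterwards, exploiting that $A_t$ is eventually close to the positive definite matrix $\Sigma^{-1/2}$, derive the almost sure rate for $\theta_t$ (and $\theta_{t,\tau}$) by a standard stochastic approximation argument. Throughout, one uses Theorem \ref{theo::consistency}, which already provides $\theta_t \to \theta^*$ and $\theta_{t,\tau} \to \theta^*$ almost surely; in particular the sequence $\theta_{t,\tau}$ that drives the recursion \eqref{AT} is consistent. Let $\mathcal{F}_t$ denote the natural filtration, so that $A_t,\theta_t,\theta_{t,\tau}$ are $\mathcal{F}_t$-measurable, $\mathbb{E}\left[g_{t+1}(\theta_t)\mid\mathcal{F}_t\right]=\nabla F(\theta_t)$ and $\mathbb{E}\left[g_{t+1}(\theta_{t,\tau})g_{t+1}(\theta_{t,\tau})^T\mid\mathcal{F}_t\right]=\Sigma(\theta_{t,\tau})$.

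\textbf{Step 1: boundedness of $(A_t)$ and deactivation of the truncation.} Writing $E_{t+1}=\{g_{t+1}(\theta_{t,\tau})^TA_tg_{t+1}(\theta_{t,\tau})\le\beta_{t+1}\}$, the first task is to show that $(A_t)$ is almost surely bounded. On $E_{t+1}$ the increment of $A_t$ is controlled by $C\gamma_{t+1}(\|A_t\|\beta_{t+1}+1)$, because $\|A_tg\|^2=g^TA_t^2g\le\|A_t\|\,g^TA_tg$ for $A_t\succeq0$; since $\gamma_t\beta_t\to0$ (from $c_\beta c_\gamma<1$) and $\sum_t\gamma_t^2\beta_t^2<\infty$ (from $2\gamma-2\beta>1$), a Robbins--Siegmund argument applied to the Lyapunov function $A\mapsto\mathrm{tr}(A\Sigma A)$, whose drift is strongly mean-reverting when $\|A_t\|$ is large, yields boundedness; positive definiteness of $A_t$ is guaranteed by the construction. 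With $(A_t)$ bounded and Assumption \ref{ass::moment} (for the stated $p$), a Borel--Cantelli argument then shows that, almost surely, $\mathbf{1}_{E_{t+1}}=1$ for all $t$ large enough, since $g_{t+1}(\theta_{t,\tau})^TA_tg_{t+1}(\theta_{t,\tau})\le\|A_t\|\,\|g_{t+1}(\theta_{t,\tau})\|^2$ while $\beta_{t+1}=c_\beta(t+1)^\beta\to\infty$; the strong moment condition on $p$ and the constraints $\gamma+\beta>1$, $2\gamma-2\beta>1$ are calibrated precisely so that this series, together with the truncation-bias and martingale-quadratic-variation series appearing below, converge.

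\textbf{Step 2: $A_t\to\Sigma^{-1/2}$ and $A_{t,\tau'}\to\Sigma^{-1/2}$.} Once the truncation is inactive, \eqref{AT} is a Robbins--Monro scheme for the zero of the field $H(A)=A\Sigma A-I_d$, perturbed by the term $A_t\left(\Sigma-\Sigma(\theta_{t,\tau})\right)A_t$ which tends to $0$ almost surely by Theorem \ref{theo::consistency} and the $L_\Sigma$-Lipschitz continuity of $\Sigma$ (Assumption \ref{ass::sigma::lip}). The unique symmetric positive definite zero of $H$ is $\Sigma^{-1/2}$, and writing $A=\Sigma^{-1/2}+E$ with $E$ symmetric one gets $H(A)=\Sigma^{1/2}E+E\Sigma^{1/2}+E\Sigma E$, so $\langle E,H(A)\rangle_F\ge\lambda_{\min}(\Sigma^{1/2})\|E\|_F^2$ for $\|E\|_F$ small: $H$ is locally strongly monotone around $\Sigma^{-1/2}$. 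Since $\sum_t\gamma_t=\infty$, $\sum_t\gamma_t^2<\infty$, the perturbation vanishes, and the truncated martingale increments have summable conditional variance ($\lesssim\sum_t\gamma_t^2\beta_t^2<\infty$, using $(A_t)$ bounded), a perturbed Robbins--Siegmund argument gives $A_t\to\Sigma^{-1/2}$ almost surely. Finally $A_{t,\tau'}$ from \eqref{ATAU} is a weighted average of $A_0,\dots,A_t$ whose weights are non-negative, sum to one, and have vanishing maximum, so $A_{t,\tau'}\to\Sigma^{-1/2}$ almost surely by the Toeplitz lemma.

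\textbf{Step 3: almost sure rate for $\theta_t$ and $\theta_{t,\tau}$, and the main obstacle.} Put $V_t=\langle\theta_t-\theta^*,\Sigma^{1/2}(\theta_t-\theta^*)\rangle$, equivalent to $\|\theta_t-\theta^*\|^2$; the weighting by $\Sigma^{1/2}$ is essential because $\Sigma^{-1/2}\nabla^2F(\theta^*)$ need not have positive definite symmetric part, whereas $\Sigma^{1/2}\cdot\Sigma^{-1/2}\nabla^2F(\theta^*)=\nabla^2F(\theta^*)\succ0$. From \eqref{thetaT},
\[
\mathbb{E}\left[V_{t+1}\mid\mathcal{F}_t\right]=V_t-2\nu_{t+1}\left\langle\theta_t-\theta^*,\,\Sigma^{1/2}\hat{A}_t\nabla F(\theta_t)\right\rangle+\nu_{t+1}^2\,\mathbb{E}\left[\left\langle\hat{A}_tg_{t+1}(\theta_t),\,\Sigma^{1/2}\hat{A}_tg_{t+1}(\theta_t)\right\rangle\mid\mathcal{F}_t\right].
\]
Writing $\Sigma^{1/2}\hat{A}_t=I_d+\Sigma^{1/2}(\hat{A}_t-\Sigma^{-1/2})$, the cross term equals $\langle\theta_t-\theta^*,\nabla F(\theta_t)\rangle$ plus a remainder bounded by $C\|\hat{A}_t-\Sigma^{-1/2}\|\,\|\theta_t-\theta^*\|^2$ (using $\|\nabla F(\theta_t)\|\le L_{\nabla F}\|\theta_t-\theta^*\|$, Assumption \ref{ass::hess}); by convexity and local strong convexity of $F$ (Assumption \ref{ass1}), and since $\hat{A}_t\to\Sigma^{-1/2}$, $\theta_t\to\theta^*$ by Step 2 and Theorem \ref{theo::consistency}, this cross term is $\ge\mu\,V_t$ for some $\mu>0$ and all $t$ large. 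The last term is $O(\nu_{t+1}^2(1+V_t))$ by Assumption \ref{ass::moment} with $p\ge1$ and boundedness of $\hat{A}_t$. Hence, for $t$ large, $\mathbb{E}[V_{t+1}\mid\mathcal{F}_t]\le(1-\mu\nu_{t+1}+O(\nu_{t+1}^2))V_t+O(\nu_{t+1}^2)$, and the refined almost sure analysis of such recursions (as in \cite{GADAT2023312,boyer2023asymptotic}) gives $V_t=O(\ln t/t^\nu)$, i.e. $\|\theta_t-\theta^*\|^2=O(\ln t/t^\nu)$ almost surely. For $\theta_{t,\tau}$, note that \eqref{thetaTAU} expresses $\theta_{t,\tau}$ as a convex combination of $\theta_0,\dots,\theta_t$ with weights of order $\ln(k+1)^\tau/(t\ln(t)^\tau)$, so by convexity $\|\theta_{t,\tau}-\theta^*\|^2\le\sum_{k\le t}w_{t,k}\|\theta_k-\theta^*\|^2=O\!\left((t\ln(t)^\tau)^{-1}\sum_{k\le t}\ln(k)^{\tau+1}k^{-\nu}\right)=O(\ln t/t^\nu)$ almost surely. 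The delicate part of the whole argument is Steps 1--2: controlling the $\beta_t$-truncation (which is what makes $A_t$ positive definite in the first place) while simultaneously keeping $(A_t)$ bounded and showing the truncation eventually switches off, which is exactly where the strong moment assumption on $p$ and the constraints $2\gamma-2\beta>1$, $\gamma+\beta>1$ enter; once $A_t$ is known bounded and consistent, Step 3 is classical.
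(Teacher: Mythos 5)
Your high-level outline (first pin down $A_t$, then deduce the rate for $\theta_t$) agrees with the paper, and Step~3 is essentially the standard SA argument that the paper delegates to Theorem~2 of \cite{GBW2023OND}. But Steps~1--2 contain genuine gaps where you skip exactly the points the paper's proof labors over.

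First, your boundedness-plus-deactivation strategy is not actually carried out. You claim a Robbins--Siegmund argument on $A\mapsto\mathrm{tr}(A\Sigma A)$ shows $(A_t)$ is bounded, with the drift "strongly mean-reverting when $\|A_t\|$ is large" --- but the truncation indicator $\mathbf{1}_{E_{t+1}}$ is itself $A_t$-dependent and becomes more likely to be \emph{zero} precisely when $\|A_t\|$ is large, so the conditional expectation $\mathbb{E}[A_t g g^T A_t\mathbf{1}_{E_{t+1}}\mid\mathcal{F}_t]$ does not reduce to $A_t\Sigma_t A_t$, and the mean-reversion is not obviously preserved. The paper never argues this way: it works with the Lyapunov quantity $\|D_t\|_F^2$ where $D_t = A_t\Sigma_{t-1}A_t - I_d$, applies Robbins--Siegmund to a carefully decomposed one-step inequality (Lemmas~\ref{lem0}--\ref{lemk2}), and only then extracts $\lambda_{\max}(A_t)=O(1)$ a.s.\ from the resulting a.s.\ convergence of $\|D_t\|_F^2$ (equation~\eqref{maj::At::O1}). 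Moreover, the paper never claims the truncation switches off; it keeps the indicator throughout and treats the truncated contribution as a separate summable error $K_{2,t}$ (Lemma~\ref{lemk2}). Your Borel--Cantelli deactivation might be made rigorous, but only \emph{after} boundedness, which you have not established.

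Second, and more seriously, your Step~2 only argues \emph{local} monotonicity of $H(A)=A\Sigma A-I_d$ near $\Sigma^{-1/2}$ ("for $\|E\|_F$ small") and then asserts convergence by a "perturbed Robbins--Siegmund argument" without any mechanism to make $A_t$ enter that local basin. The paper's route is global: it shows the drift term $\langle A_t\Sigma_t\tilde D_t,\tilde D_t\rangle_F = \mathrm{tr}\bigl((A_t\Sigma_t A_t)^{1/2}(\tilde D_t A_t^{-1}\tilde D_t)(A_t\Sigma_t A_t)^{1/2}\bigr)\ge 0$ for every positive definite $A_t$, and then --- this is the crucial ingredient you are missing --- proves an a.s.\ lower bound on $\lambda_{\min}(A_t)$ of order $t^{-(1-\gamma)/4}$ (Lemma~\ref{lem::eig0}), so that $\sum_t\gamma_{t+1}\lambda_{\min}(A_t)^2/\lambda_{\max}(A_t)=\infty$. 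Combined with $\sum_t\mathbb{E}[K_{1,t}\mid\mathcal{F}_t]<\infty$ from Robbins--Siegmund, this forces $\liminf\|D_t\|_F^2=0$ and hence $\|D_t\|_F^2\to0$. Without an analogue of Lemma~\ref{lem::eig0}, nothing in your argument rules out $\lambda_{\min}(A_t)$ decaying so fast that the drift series converges and the Lyapunov value stabilizes away from zero. You should also note that the paper needs a preliminary polynomial rate $\|\theta_{t,\tau}-\theta^*\|^2=O(t^{-\mu})$ (Lemma~\ref{lem4}) to control the $\Sigma_t-\Sigma_{t-1}$ perturbation terms ($R_{3,t},R_{4,t}$); your proposal sidesteps this because you claim the truncation deactivates, but that claim is unsupported.
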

The proof is given in Section \ref{proof}. 
Observe that the conditions on $\gamma , \nu , \beta$ imply that $\nu < \gamma$ and $\gamma > 3/4$. 
These conditions are due to the use of Robbins-Siegmund Theorem 
and should be certainly improved. 
Indeed, we will see in Section \ref{sec::simu} that these conditions do not need to be fulfilled in practice. Finally, under slightly restricted conditions, the following theorem gives  better convergence rates of $\theta^*$.
\begin{theo}\label{theo::tlc}
Suppose Assumptions \ref{ass1}, \ref{ass::sigma::lip}, \ref{ass::hess} and \ref{ass::hess::lip}    hold as well as \ref{ass::moment} with 
$p> \max \left\lbrace  \frac{8-8\gamma}{\gamma + \beta -1} , 2\left( \frac{1}{\gamma}-1 \right) \right\rbrace$. 
Suppose also that $ 2\gamma +2\nu > 3$, $\nu + \beta < 1$, $2\gamma -2\beta > 1$ and that $\gamma + \beta > 1$.  
Then,
\[
\left\| \theta_{t,\tau} - \theta^{*} \right\|^{2} = O \left( \frac{\ln t}{t} \right) \quad a.s. \quad \quad \text{ and } \quad \quad \sqrt{ t} \left( \theta_{t,\tau} - \theta^{*} \right) \xrightarrow[t \to + \infty]{\mathcal{L}} \mathcal{N} \left( 0 , H^{-1}\Sigma H^{-1} \right)
\]
with $\Sigma := \Sigma(\theta^*)$ and $H:= \nabla^2 F (\theta^*)$.
\end{theo}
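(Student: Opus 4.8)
The plan is to establish the central limit theorem by showing that $\theta_{t,\tau}$ behaves asymptotically like a weighted average of a linearized recursion whose driving noise has covariance $\Sigma$. The starting point is Theorem~\ref{theo::rate}, which already gives strong consistency of $\theta_t$, $\theta_{t,\tau}$, $A_t$ and $A_{t,\tau'}$, together with the rate $\|\theta_t-\theta^*\|^2 = O(\ln t / t^\nu)$. First I would write the Robbins--Monro recursion \eqref{thetaT} in its linearized form: using Assumption~\ref{ass::hess::lip} (local Lipschitz Hessian), expand $g_{t+1}(\theta_t) = \nabla F(\theta_t) + \xi_{t+1}$ where $\xi_{t+1} := g_{t+1}(\theta_t) - \nabla F(\theta_t)$ is a martingale difference, and $\nabla F(\theta_t) = H(\theta_t - \theta^*) + R_t$ with $\|R_t\| \lesssim \|\theta_t-\theta^*\|^2$. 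Substituting and writing $A_t = \Sigma^{-1/2} + (A_t - \Sigma^{-1/2})$, the recursion becomes
\[
\theta_{t+1} - \theta^* = \left( I_d - \nu_{t+1}\Sigma^{-1/2}H \right)(\theta_t - \theta^*) - \nu_{t+1}\Sigma^{-1/2}\xi_{t+1} + \nu_{t+1}\rho_{t+1},
\]
where $\rho_{t+1}$ collects the second-order remainder $R_t$, the term $(A_t - \Sigma^{-1/2})g_{t+1}(\theta_t)$ coming from the error in estimating $\Sigma^{-1/2}$, and the truncation/indicator discrepancy. Since $\nu < \gamma$ and $\gamma > 3/4$, the rate from Theorem~\ref{theo::rate} combined with the a.s.\ convergence rate of $A_t$ (which I would need to quantify, at least to $o(1)$ in an $L^2$ or a.s.\ sense sufficient for the averaging lemma) should make $\rho_{t+1}$ negligible after averaging.

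Next I would invoke the weighted-averaging machinery of \cite{mokkadem2011generalization} (or \cite{boyer2023asymptotic}) adapted to logarithmic weights $\ln(t+1)^\tau$. The key structural fact is that $\Sigma^{-1/2}H$ is similar to the symmetric positive definite matrix $\Sigma^{-1/4}H\Sigma^{-1/4}$ (conjugating by $\Sigma^{-1/4}$), hence all its eigenvalues have positive real part, so the deterministic contraction $\prod(I - \nu_k \Sigma^{-1/2}H)$ decays and the standard non-averaged estimate gives $\|\theta_t - \theta^*\|^2 = O(\ln t / t^\nu)$ — consistent with Theorem~\ref{theo::rate}. Then the weighted average $\theta_{t,\tau}$ satisfies an Abel-summation identity; the dominant term is $\frac{1}{\sum_k \ln(k+1)^\tau}\sum_{k}\ln(k+1)^\tau (\theta_k - \theta^*)$, and after exchanging the order of summation and using $\sum_k (I - \nu_j\Sigma^{-1/2}H)\cdots \approx (\nu_{t}\Sigma^{-1/2}H)^{-1}$-type telescoping, the martingale part aggregates to $\frac{1}{t}\sum_{k\le t} H^{-1}\xi_k$ up to lower-order terms. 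A central limit theorem for martingale arrays (Lindeberg condition checked via Assumption~\ref{ass::moment} and the $L^{2p}$ bound, with the predictable quadratic variation converging to $H^{-1}\Sigma H^{-1}$ because $\mathbb{E}[\xi_k\xi_k^T \mid \mathcal{F}_{k-1}] = \Sigma(\theta_{k-1}) \to \Sigma(\theta^*) = \Sigma$ by Assumption~\ref{ass::sigma::lip} and consistency) then yields $\sqrt{t}(\theta_{t,\tau} - \theta^*) \xrightarrow{\mathcal{L}} \mathcal{N}(0, H^{-1}\Sigma H^{-1})$. The improved a.s.\ rate $O(\ln t / t)$ comes in parallel from a law of the iterated logarithm / Robbins--Siegmund argument applied to the averaged sequence.

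The main obstacle I anticipate is controlling the coupling term $(A_t - \Sigma^{-1/2})g_{t+1}(\theta_t)$ and the truncation indicator $\mathbf{1}_{\{g_{t+1}(\theta_{t,\tau})^T A_t g_{t+1}(\theta_{t,\tau}) \le \beta_{t+1}\}}$ inside the averaging: one must show these contribute only $o(1/\sqrt{t})$ after weighted summation, which requires a quantitative (not merely qualitative) rate of convergence $\|A_t - \Sigma^{-1/2}\|$ — presumably of order $t^{-\alpha}$ for some $\alpha$ depending on $\gamma,\beta$, obtained from the Robbins--Monro analysis of \eqref{AT} exploiting $2\gamma - 2\beta > 1$ and $\gamma+\beta>1$ — together with the fact that $\beta_{t+1} = c_\beta t^\beta \to \infty$ so the indicator equals $1$ eventually on the event of interest (by consistency of $A_t$ and boundedness of $\theta_{t,\tau}$, with a Borel--Cantelli argument using Assumption~\ref{ass::moment}). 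Getting the interplay of all the hyperparameter constraints right so that every remainder term is genuinely negligible is where the bulk of the technical work lies; the CLT itself, once the linearization is clean, is a standard application of the Mokkadem--Pelletier weighted-averaging theorem.
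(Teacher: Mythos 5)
The paper does not redo Polyak–Ruppert averaging from scratch: it invokes Theorem~4 of \cite{GBW2023OND}, a black-box CLT for weighted-averaged stochastic gradient schemes with a \emph{random} matrix step-size. That theorem reduces the whole problem to verifying a single condition on the increments of the inverse step matrix,
\[
\frac{1}{\sum_{k\le t}\ln (k+1)^{\tau}}\sum_{k\le t}\ln (k+1)^{\tau+1/2+\delta}\bigl\|A_{k+1,\tau'}^{-1}-A_{k,\tau'}^{-1}\bigr\|_{op}(k+1)^{\gamma/2}=O\bigl(t^{-\nu'}\bigr)\quad\text{a.s.}
\]
for some $\nu'>1/2$. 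Because the paper takes $\hat A_t=A_{t,\tau'}$ (the \emph{averaged} matrix) as the step in \eqref{thetaT}, the increment $A_{k+1,\tau'}-A_{k,\tau'}$ is automatically damped by the averaging weight $\ln(k+1)^{\tau'}/\sum_{j\le k}\ln(j+1)^{\tau'}\sim 1/(k\ln k)$, so $\|A_{k+1,\tau'}^{-1}-A_{k,\tau'}^{-1}\|_{op}=o(1/k)$ a.s.\ follows from the mere a.s.\ convergence of $A_t$ and $A_{t,\tau'}$ to the positive matrix $\Sigma^{-1/2}$ established in Theorem~\ref{theo::rate}. No convergence \emph{rate} for $A_t$ is needed anywhere.

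Your plan essentially reconstructs the internals of that cited theorem (linearization, Abel summation, martingale CLT, Lindeberg), which is fine in spirit, but you misidentify the technical bottleneck. You claim the coupling term $(A_t-\Sigma^{-1/2})g_{t+1}(\theta_t)$ forces you to obtain a quantitative rate $\|A_t-\Sigma^{-1/2}\|=O(t^{-\alpha})$; in fact the Polyak–Ruppert cancellation makes the asymptotic covariance $H^{-1}\Sigma H^{-1}$ independent of the limiting step matrix, and the averaged-estimate analysis only touches the step matrix through its \emph{increments}, not its distance to the limit. The paper never proves (and does not need) a rate on $\|A_t-\Sigma^{-1/2}\|$. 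Insisting on such a rate is both unnecessary and, under the stated assumptions, would be an extra lemma you cannot simply wave in. Two further small points: the truncation indicator $\mathbf{1}_{\{g^T A g\le\beta\}}$ appears only in the update of $A_t$ in \eqref{AT}, not in the $\theta$ recursion \eqref{thetaT}, so it enters the CLT only indirectly through the consistency of $A_t$ (already handled by Theorem~\ref{theo::rate}) and requires no Borel--Cantelli argument inside the averaging; and the paper's proofs take $\hat A_t=A_{t,\tau'}$ rather than $A_t$ as you assume, precisely because the averaged matrix has the small increments that make the verification immediate.
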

The proof is given in Section \ref{proof}. 
Thus, we obtain the asymptotic efficiency of the weighted average estimator. Furthermore, these estimators require only \(O(Nd^{2})\) operations, compared to a complexity of \(O(Nd^{3})\) operations if we directly compute \(\mathcal{G}_{t}^{-1/2}\).

\section{A Streaming Full AdaGrad algorithm with  $\mathcal{O} (N_{t}d)$ operations}\label{sec::FullStreaming}
In this section, following the idea of \cite{GBW2023OND}, we introduce a Streaming Weighted Averaged Full AdaGrad algorithm (SWAFA for short) to reduce the computational complexity of the algorithm. 
We consider that samples arrive (or are dealt with) by blocks of size $n \in \mathbb{N}$. 
More precisely, we suppose that at time $t$, we have $n$ new  i.i.d copies of $X$ denoted as $(X_{t,1},\ldots ,X_{t,n})$. 
Therefore, at time $t$, we will have observed a total of $N_t = nt$ i.i.d copies of $X$.
\vspace{1ex}

In this scenario, let us denote
$g_{t+1}(\theta_{t}) = \frac{1}{n} \sum_{i=1}^{n} \nabla_{\theta} f(X_{t+1,i},\theta_{t})$. Then, the streaming algorithm is defined recursively for all $t\ge0$ by
\begin{align}
	\theta_{t+1} &= \theta_{t} - \nu_{t+1}{\hat{A_t}}g_{t+1}\pa{\theta_{t}} \label{thetaT::stream} \\ 
	\theta_{t+1,\tau} &= \pa{1-\frac{\ln (t+1)^{\tau}}{\sum_{k=0}^t \ln (k+1)^{\tau}}}\theta_{t,\tau}+\frac{\ln (t+1)^{\tau}}{\sum_{k=0}^t \ln (k+1)^{\tau}}\theta_{t+1} \label{thetaTAU::stream}\\ 
	A_{t+1} &= A_t - \gamma_{t+1}\pa{ n A_tg_{t+1}\pa{\theta_{t,\tau}}g_{t+1}\pa{\theta_{t,\tau}}^TA_t -Id}\mathbf{1}_{\acco{ n g_{t+1}\pa{\theta_{t,\tau}}^TA_tg_{t+1}\pa{\theta_{t,\tau}}\le\beta_{t+1}}} \label{AT::stream}\\
	A_{t+1,\tau '} &= \pa{1-\frac{\ln (t+1)^{\tau'}}{\sum_{k=0}^t \ln (k+1)^{\tau'}}}A_{t,\tau'}+\frac{\ln (t+1)^{\tau'}}{\sum_{k=0}^t \ln (k+1)^{\tau'}}A_{t+1} \label{ATAU::stream}
\end{align}
Then, we still have $O(d^{2})$ operations for updating $A_{t},A_{t,\tau '}$ and $\theta_{t}$. Nevertheless, we only have $t = \frac{N_{t}}{n}$ iterations. This leads to total number of operations of order $O (N_{t}d^{2}n^{-1})$ operations detailed as follows:
\[
\underbrace{ N_{t}d +  \frac{N_{t}d^{2}}{n}}_{\text{updating } \theta_{t},A_{t},A_{t,\tau' } } +\underbrace{ \frac{N_{t}d}{n} }_{\text{updating } \theta_{t,\tau} } .
\]

Considering $n = d$ enables the complexity of the algorithm to be reduced to $\mathcal{O}(N_{t}d)$ operations, which is equivalent to the complexity of the AdaGrad algorithm   defined by \eqref{DefAdaDiag} {(up to the computing of the inverse square root for Adagrad)}.  {Note that we use the term 'streaming' here, but one could also use 'online mini-batch' instead.} We next give three theorems that establish the strong consistency, convergence rates, and asymptotic efficiency of the SWAFA estimates.
\begin{theo}\label{theo::consistency::stream}
Suppose Assumptions \ref{ass1}  and \ref{ass::hess} hold. Suppose also that $ 2\gamma +2\nu > 3$ and $\nu + \beta < 1$. Then  $\theta_{t}$ and $\theta_{t,\tau}$ defined by \eqref{thetaT::stream} and \eqref{thetaTAU::stream} converge almost surely to $\theta^{*}$.
\end{theo}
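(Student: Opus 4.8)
The plan is to follow essentially the same strategy as in the proof of Theorem \ref{theo::consistency}, adapting each step to the streaming (mini-batch) recursion \eqref{thetaT::stream}--\eqref{ATAU::stream}. First I would observe that the mini-batched gradient $g_{t+1}(\theta_t) = \frac{1}{n}\sum_{i=1}^n \nabla_\theta f(X_{t+1,i},\theta_t)$ is, conditionally on $\theta_t$ (and on the past $\sigma$-algebra $\mathcal{F}_t$), an unbiased estimate of $\nabla F(\theta_t)$, with a conditional second moment controlled exactly as in Assumption \ref{ass::moment}: by convexity of $\|\cdot\|^2$ and the i.i.d.\ structure of the block, $\mathbb{E}\big[\|g_{t+1}(\theta_t)\|^2 \mid \mathcal{F}_t\big] \leq \frac{1}{n}\mathbb{E}\big[\|\nabla_\theta f(X,\theta_t)\|^2\mid \mathcal{F}_t\big]+\|\nabla F(\theta_t)\|^2 \leq C_1 + C_1(F(\theta_t)-F(\theta^*)) + L_{\nabla F}^2 \|\theta_t-\theta^*\|^2$, so that the moment bound needed for a Robbins--Siegmund argument is unchanged up to constants. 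Importantly, since we only require $p=1$ here (strong consistency, not rates), Assumption \ref{ass::moment} with $p=1$ suffices, which is why it is not listed among the hypotheses.

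Next I would establish that the sequence $(A_t)_{t\ge 0}$ stays bounded and stays positive definite, uniformly over the sample path. The truncation indicator $\mathbf{1}_{\{n\, g_{t+1}(\theta_{t,\tau})^T A_t g_{t+1}(\theta_{t,\tau}) \le \beta_{t+1}\}}$ is designed precisely so that the rank-one update $A_t \mapsto A_t - \gamma_{t+1}(nA_t g g^T A_t - I_d)\mathbf{1}_{\{\cdot\}}$ never makes the unique positive eigenvalue of the perturbation exceed $\gamma_{t+1}\beta_{t+1} \le c_\gamma c_\beta < 1$, so $A_{t+1}$ remains positive definite by the same algebraic argument given in the non-streaming subsection (the factor $n$ is absorbed into the indicator and into the $nA_tgg^TA_t$ term, leaving $\gamma_{t+1}\beta_{t+1}$ as the relevant product). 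A Robbins--Siegmund / supermartingale argument applied to $\operatorname{tr}(A_t)$ or to a suitable Lyapunov functional of $A_t$ then gives $\sup_t \|A_t\| < \infty$ a.s.\ and $\inf_t \lambda_{\min}(A_t) > 0$ a.s. Note that we do not need $A_t \to \Sigma^{-1/2}$ here; only the boundedness and positivity of $A_t$ matter for consistency of $\theta_t$, which is why Assumption \ref{ass::sigma::lip} is absent from the hypotheses.

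Then I would run the standard Robbins--Monro / Robbins--Siegmund machinery on $V_t := \|\theta_t - \theta^*\|^2$. Writing $\theta_{t+1} - \theta^* = \theta_t - \theta^* - \nu_{t+1}\hat A_t g_{t+1}(\theta_t)$ and expanding, the cross term has conditional expectation $-2\nu_{t+1}(\theta_t-\theta^*)^T \hat A_t \nabla F(\theta_t)$; using $\inf_t \lambda_{\min}(\hat A_t)>0$ together with convexity (so $(\theta_t-\theta^*)^T\nabla F(\theta_t) \ge 0$, and strictly positive away from $\theta^*$) gives the required negative drift, while the remainder term is bounded by $\nu_{t+1}^2 \|\hat A_t\|^2 \mathbb{E}[\|g_{t+1}(\theta_t)\|^2\mid\mathcal{F}_t] \le C\nu_{t+1}^2(1 + V_t + F(\theta_t)-F(\theta^*))$. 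Since the condition $2\gamma+2\nu>3$ with $\nu+\beta<1$ forces $\nu > 1/2$, we have $\sum \nu_t^2 < \infty$, and Robbins--Siegmund yields $V_t$ converges a.s.\ and $\sum \nu_{t+1}(\theta_t-\theta^*)^T\hat A_t\nabla F(\theta_t) < \infty$; combined with $\sum\nu_t = \infty$ and local strong convexity this forces $\theta_t \to \theta^*$ a.s. The averaged iterate $\theta_{t,\tau}$ is a weighted Cesàro average of $\theta_t$ with weights $\ln(t+1)^\tau / \sum_{k\le t}\ln(k+1)^\tau$ summing to one at each step; since the per-step weight tends to $0$ and $\theta_t \to \theta^*$, a Toeplitz-type lemma gives $\theta_{t,\tau} \to \theta^*$ a.s.\ as well.

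The main obstacle I expect is the coupling between the three interlocked recursions: the update of $A_t$ uses $\theta_{t,\tau}$ (not $\theta_t$), so the ``noise'' feeding $A_t$ depends on an averaged quantity whose convergence we are simultaneously trying to prove, and conversely $\theta_{t+1}$ uses $\hat A_t$ (which may be $A_{t,\tau}$). Breaking this circularity is the delicate point: one handles it by first proving the a.s.\ boundedness and positivity of $A_t$ \emph{unconditionally} on the behaviour of $\theta_t$ (which the truncation makes possible, since the bound $\gamma_{t+1}\beta_{t+1}<1$ does not depend on where $\theta_{t,\tau}$ is), then feeding that uniform control into the $\theta_t$-recursion, and only afterwards deducing $\theta_{t,\tau}\to\theta^*$. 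Everything else is a routine adaptation of the non-streaming proof, with the block size $n$ entering only through harmless constant factors.
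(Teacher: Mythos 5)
Your plan rests on an intermediate claim that is not actually available: you want to show, \emph{unconditionally on the behaviour of} $\theta_t$, that $\sup_t\|A_t\|<\infty$ a.s.\ and $\inf_t\lambda_{\min}(A_t)>0$ a.s., and then feed these uniform bounds into a Robbins--Siegmund argument on $\|\theta_t-\theta^*\|^2$. But the truncation alone does not give uniform two-sided bounds. What it gives unconditionally are the much weaker estimates that the paper actually proves: $\lambda_{\max}(A_t)\le\lambda_{\max}(A_0)+\sum_{k\le t}\gamma_k=O(t^{1-\gamma})$ (growing without bound), and $\lambda_{\min}(A_t)\ge \lambda_0/\beta_{t+1}$ (decaying to zero since $\beta_t=c_\beta t^\beta\to\infty$). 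The uniform bound $\|A_t\|=O(1)$ is only obtained later (in the proof of Theorem~\ref{theo::rate}, via \eqref{maj::At::O1}), and only \emph{after} using the strong consistency of $\theta_t$ to conclude that $\Sigma_t\to\Sigma$ is positive. So the lemma you propose at step two is precisely the circularity you identify in your last paragraph, and it is not broken by the truncation; your argument as written would not go through.

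The paper's route is different: it establishes only the polynomially-degrading eigenvalue controls $\lambda_{\max}(A_{t,\tau'})=O(t^{1-\gamma})$ and $\lambda_{\max}(A_{t,\tau'}^{-1})=O(\beta_t)$ — both obtainable without any knowledge of $\theta_t$ — and then invokes Theorem~1 of \cite{GBW2023OND}, a result tailored to handle stochastic gradient schemes with an adaptive random stepsize matrix whose conditioning degrades at exactly these rates. This is where the hypotheses $2\gamma+2\nu>3$ and $\nu+\beta<1$ are genuinely used: they compensate, in the $\theta_t$-recursion, for the growth of $\lambda_{\max}(A_{t,\tau'})$ and the decay of $\lambda_{\min}(A_{t,\tau'})$. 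In your sketch these hypotheses play almost no role (beyond the observation that they force $\nu>1/2$), which is a symptom of the missing ingredient. If you repaired the argument by running Robbins--Siegmund on $F(\theta_t)-F(\theta^*)$ with the correct eigenvalue rates (as in the paper's Lemma~\ref{lem4}, with $n$ entering only through constants), you would essentially be reproving the cited black-box theorem; that is legitimate but should be presented as such. Your observation that the mini-batch gradient is conditionally unbiased with the same moment bound (up to constants in $n$) is correct and is indeed the only genuinely new point the streaming case requires.
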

The proof is very similar to the one of Theorem \ref{theo::consistency} and is therefore not given. 

\begin{theo}\label{theo::rate::stream}
Suppose Assumptions \ref{ass1}, \ref{ass::sigma::lip}  and \ref{ass::hess}  hold as well as \ref{ass::moment} with $p> \max \left\lbrace  \frac{8-8\gamma}{\gamma + \beta -1} , 2 \right\rbrace$. Suppose also that $ 2\gamma +2\nu > 3$, $\nu + \beta < 1$, $2\gamma -2\beta > 1$, $6\gamma +2\nu > 7$ and that $\gamma + \beta > 1$.  Then  
\[
A_{t} \xrightarrow[t\to + \infty]{a.s} \Sigma^{-1/2}  \quad \quad \text{ and } \quad \quad A_{t,\tau '} \xrightarrow[t\to + \infty]{a.s} \Sigma^{-1/2} 
\]
In addition, $\theta_{t}$ and $\theta_{t,\tau}$ defined by \eqref{thetaT::stream} and \eqref{thetaTAU::stream} satisfy
\[
\left\| \theta_{t} - \theta^{*} \right\|^{2} = O \left( \frac{\ln t}{t^{\nu}} \right) \quad a.s.  \quad \quad \text{ and } \quad \quad \left\| \theta_{t,\tau} - \theta^{*} \right\|^{2} = O \left( \frac{\ln t}{t^{\nu}} \right) \quad a.s.
\]
\end{theo}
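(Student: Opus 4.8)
The plan is to mirror the structure of the proof of Theorem \ref{theo::rate}, handling the extra averaging over blocks of size $n$ by noting that $g_{t+1}(\theta_{t,\tau}) = \frac1n\sum_{i=1}^n \nabla_\theta f(X_{t+1,i},\theta_{t,\tau})$ still has conditional mean $\nabla F(\theta_{t,\tau})$ and that $n\,g_{t+1}(\theta)g_{t+1}(\theta)^T$ has conditional expectation whose deviation from $\Sigma(\theta)$ can be controlled: writing $h_i = \nabla_\theta f(X_{t+1,i},\theta)$, one has $\E[n\,g_{t+1}g_{t+1}^T\mid\mathcal F_t] = \Sigma(\theta) + (n-1)\nabla F(\theta)\nabla F(\theta)^T$, and near $\theta^*$ the extra term is $O(\|\theta-\theta^*\|^2)$ by Assumption \ref{ass::hess}, hence negligible at the rates in play. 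The truncation indicator $\mathbf{1}_{\{n g_{t+1}^T A_t g_{t+1}\le\beta_{t+1}\}}$ plays exactly the same role as before, keeping $A_t$ positive definite and with bounded operator norm growth. First I would establish a moment bound on $\|n\,g_{t+1}(\theta_{t,\tau})g_{t+1}(\theta_{t,\tau})^T\|^p$ from Assumption \ref{ass::moment} with $p>2$ (Jensen over the block gives the same order as a single gradient, up to constants depending on $n$), which is why $p>2$ replaces $p>2(1/\gamma-1)$ in the non-streaming version.

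The core argument for $A_t\to\Sigma^{-1/2}$ is a Robbins--Siegmund / quasi-martingale analysis of $V_t := \|A_t - \Sigma^{-1/2}\|_F^2$ (or a suitably weighted Frobenius norm adapted to the fixed point). Expanding the recursion \eqref{AT::stream} and using the fixed-point identity $\Sigma^{-1/2}\Sigma\Sigma^{-1/2}=I_d$, the drift term is $-2\gamma_{t+1}\langle A_t-\Sigma^{-1/2},\, \E[\,\cdot\mid\mathcal F_t]\rangle$, which one shows is negative definite in a neighbourhood of the fixed point (this uses that the linearization of the map $A\mapsto A\Sigma A - I$ at $\Sigma^{-1/2}$ is a stable operator on symmetric matrices, together with $\Sigma(\theta^*)$ positive from Assumption \ref{ass::sigma::lip}); the remainder terms involve $\gamma_{t+1}^2$ times the truncated squared gradient (bounded via $\beta_{t+1}$ and the moment bound), plus the perturbation from $\theta_{t,\tau}\ne\theta^*$, controlled by $\|\theta_{t,\tau}-\theta^*\|$ and the Lipschitz property of $\Sigma$ from Assumption \ref{ass::sigma::lip}. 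The conditions $2\gamma-2\beta>1$ and $\gamma+\beta>1$ are precisely what make $\sum\gamma_{t+1}^2\beta_{t+1}<\infty$ and $\sum\gamma_{t+1}\beta_{t+1}^{-(p-1)}\cdot(\text{moment})<\infty$ (so the truncation is eventually inactive along the trajectory); the new condition $6\gamma+2\nu>7$ relative to Theorem \ref{theo::rate} should come from the coupling between the $\theta$-error rate $O(\ln t/t^\nu)$ and the $A$-error: one needs $\sum_t \gamma_t \cdot t^{-\nu/2}$-type cross terms to be summable or of lower order, which tightens the budget once the block averaging has already consumed part of it. Applying Robbins--Siegmund then yields $V_t$ converges a.s.\ and $\sum\gamma_{t+1}V_t<\infty$, whence $V_t\to0$; the same for $A_{t,\tau'}$ follows by the standard Cesàro/weighted-average argument since $A_t\to\Sigma^{-1/2}$.

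For the rate $\|\theta_t-\theta^*\|^2 = O(\ln t/t^\nu)$, I would treat \eqref{thetaT::stream} as a perturbed Robbins--Monro recursion: expanding $\|\theta_{t+1}-\theta^*\|^2$ and using the Taylor expansion of $\nabla F$ at order two (Assumption \ref{ass::hess}), the drift is $-2\nu_{t+1}(\theta_t-\theta^*)^T \hat A_t \nabla F(\theta_t)$, which, since $\hat A_t\to\Sigma^{-1/2}$ is eventually positive definite and $\nabla F$ is strongly monotone near $\theta^*$ (Assumption \ref{ass1}), is bounded above by $-c\,\nu_{t+1}\|\theta_t-\theta^*\|^2$ eventually; the martingale term contributes $\nu_{t+1}^2\|\hat A_t\|^2\|g_{t+1}\|^2$, summable after weighting by $t^\nu/\ln t$ thanks to $p>2$ and the moment assumption, and the condition $2\gamma+2\nu>3$ handles the interplay with the $A_t$-fluctuations. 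A weighted Robbins--Siegmund argument (multiplying by $t^\nu/\ln t$ and checking the increments of this weight sequence are compatible) then gives the stated a.s.\ rate, and the rate for $\theta_{t,\tau}$ transfers through \eqref{thetaTAU::stream} by the weighted-averaging lemma already used for Theorem \ref{theo::rate}.

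The main obstacle I anticipate is not any single estimate but the bookkeeping of the three coupled constraints $2\gamma-2\beta>1$, $\gamma+\beta>1$, $6\gamma+2\nu>7$ simultaneously with the moment threshold $p>\max\{(8-8\gamma)/(\gamma+\beta-1),\,2\}$: one must show the truncation event $\{n g_{t+1}^T A_t g_{t+1}>\beta_{t+1}\}$ occurs only finitely often along the trajectory (Borel--Cantelli with the $p$-th moment bound and $\beta_t=c_\beta t^\beta$), and then that, once truncation is inactive, all the $\gamma_t^2$-remainder series and the $\theta$--$A$ cross-error series converge — the $6\gamma+2\nu>7$ condition is the delicate one, since it is exactly the streaming analogue that appears because the block averaging has already been "spent" and the Robbins--Siegmund slack for the coupling is correspondingly reduced. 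Verifying that these inequalities are jointly feasible (they are: e.g.\ $\gamma$ close to $1$, $\beta$ close to $1/2$, $\nu$ close to $1/2$) and that the remainder bookkeeping closes is where the real work lies; every other step is a routine adaptation of the non-streaming proof.
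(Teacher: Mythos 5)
Your high-level plan matches the paper's: reduce to the non-streaming proof of Theorem~\ref{theo::rate}, identify that the only structurally new term comes from $\mathbb{E}[n\,g_{t+1}g_{t+1}^T\mid\mathcal F_t] = \Sigma(\theta_{t,\tau}) + (n-1)\nabla F(\theta_{t,\tau})\nabla F(\theta_{t,\tau})^T$, observe that the extra rank-one bias is $O(\|\theta-\theta^*\|^2)$ and hence controllable via the preliminary rate from Lemma~\ref{lem4}, and note that the moment threshold drops from $2(1/\gamma-1)$ to $2$ thanks to the block average. All of that is exactly what drives the paper's proof, and you correctly place the origin of $6\gamma+2\nu>7$ in the tension between this perturbation and the available $\theta$-rate budget.

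There is, however, a genuine gap in your core Robbins--Siegmund step. You propose to run the argument on $V_t:=\|A_t-\Sigma^{-1/2}\|_F^2$ and to establish negativity of the drift by \emph{linearizing} $A\mapsto A\Sigma A - I$ at the fixed point $\Sigma^{-1/2}$; that is a local stability argument, and you would then still need to prove that $A_t$ eventually enters and stays in the basin where the linearization controls the sign of the drift. Nothing in your plan supplies that basin-of-attraction step, and without it Robbins--Siegmund does not close. The paper avoids this entirely by working with $D_t := A_t\Sigma_{t-1}A_t - I_d$ (equivalently $\widetilde D_t := A_t\Sigma_t A_t - I_d$) and observing that the dominant drift contribution $\mathbb{E}[K_{1,t}\mid\mathcal F_t] = 2\gamma_{t+1}\langle A_t\Sigma_t\widetilde D_t,\widetilde D_t\rangle_F$ can be rewritten, via a trace identity, as $2\gamma_{t+1}\,\mathrm{tr}\bigl((A_t\Sigma_t A_t)^{1/2}(\widetilde D_t A_t^{-1}\widetilde D_t)(A_t\Sigma_t A_t)^{1/2}\bigr)\ge 0$, which is \emph{globally} nonnegative because all factors are symmetric positive. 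No neighborhood argument is needed, and the subsequent lower bound in terms of $\lambda_{\min}(A_t)^2/\lambda_{\max}(A_t)$ (together with Lemma~\ref{lem::eig0}) then delivers $\|D_t\|_F\to 0$. A second, smaller mismatch: you suggest proving by Borel--Cantelli that the truncation indicator $\mathbf{1}_{\{n g_{t+1}^T A_t g_{t+1}>\beta_{t+1}\}}$ fires only finitely often. The paper neither proves nor needs this; it splits the inner product into $K_{1,t}$ (positive) and $K_{2,t}$ (the contribution from the truncated event), and shows $\sum_t\mathbb{E}[|K_{2,t}|\mid\mathcal F_t]<\infty$ via Markov's inequality with the $p$th moment, which is weaker and suffices. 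So while your identification of the streaming-specific perturbation and of the source of the new hyperparameter constraint is right, the Lyapunov-function choice and drift analysis you sketch would not, as written, establish $A_t\to\Sigma^{-1/2}$.
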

The proof is given in Section \ref{proof}. 
Again, the restricted conditions on $\gamma , \nu , \beta$ are due to the use of Robbins-Siegmund Theorem and should be improved. 

\begin{theo}\label{theo::tlc::stream}
Suppose Assumptions \ref{ass1},  \ref{ass::sigma::lip}, \ref{ass::hess} and \ref{ass::hess::lip}    hold as well as \ref{ass::moment} for $p> \max \left\lbrace  \frac{8-8\gamma}{\gamma + \beta -1} , 2 \right\rbrace$. Suppose also that $ 2\gamma +2\nu > 3$, $\nu + \beta < 1$, $2\gamma -2\beta > 1$, $6\gamma +2\nu > 7$ and that $\gamma + \beta > 1$. Then    $\theta_{t,\tau}$ defined by   \eqref{thetaTAU::stream} satisfy
\[
\left\| \theta_{t,\tau} - \theta^{*} \right\|^{2} = O \left( \frac{\ln nt}{nt} \right) \quad a.s. \quad \quad \text{ and } \quad \quad   \sqrt{nt} \left( \theta_{t,\tau} - \theta^{*} \right) \xrightarrow[t \to + \infty]{\mathcal{L}} \mathcal{N} \left( 0 , H^{-1}\Sigma H^{-1} \right)
\]
with $\Sigma := \Sigma \left( \theta^{*} \right)$ and $H:= \nabla^{2}F \left( \theta^{*} \right)$.
\end{theo}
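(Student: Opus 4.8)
The plan is to mimic the proof of Theorem \ref{theo::tlc} while keeping careful track of the block size $n$, which now enters both the recursion for $A_t$ (through the factor $n$ multiplying the rank-one term and inside the indicator) and the effective sample size $N_t = nt$. First I would invoke Theorem \ref{theo::rate::stream} to get the strong consistency $A_t \to \Sigma^{-1/2}$ and $A_{t,\tau'} \to \Sigma^{-1/2}$ a.s., together with the preliminary rate $\|\theta_{t,\tau} - \theta^*\|^2 = O(\ln t / t^{\nu})$ a.s. The key point to exploit is that, since $g_{t+1}(\theta_{t,\tau})$ is now an \emph{average} of $n$ i.i.d.\ gradients, its conditional covariance around $\theta^*$ is $\tfrac{1}{n}\Sigma(\theta_{t,\tau})$, and the factor $n$ in \eqref{AT::stream} exactly compensates this so that $A_t$ still targets $\Sigma^{-1/2}$; meanwhile the martingale increment driving $\theta_{t+1}$ has conditional variance of order $1/n$ smaller, which is what produces the $\sqrt{nt}$ (rather than $\sqrt{t}$) normalization.

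Second, I would write the Robbins--Monro recursion for $\theta_t$ around $\theta^*$: using Assumption \ref{ass::hess::lip} (the local Lipschitz behaviour of $\nabla F$) to linearize, $\nabla F(\theta_t) = H(\theta_t - \theta^*) + O(\|\theta_t - \theta^*\|^2)$, so that
\begin{align*}
\theta_{t+1} - \theta^* = (I_d - \nu_{t+1}\hat A_t H)(\theta_t - \theta^*) - \nu_{t+1}\hat A_t \xi_{t+1} - \nu_{t+1}\hat A_t R_{t+1},
\end{align*}
where $\xi_{t+1} := g_{t+1}(\theta_t) - \nabla F(\theta_t)$ is a martingale increment with $\mathbb{E}[\xi_{t+1}\xi_{t+1}^T \mid \mathcal{F}_t] = \tfrac{1}{n}\Sigma(\theta_t) \to \tfrac{1}{n}\Sigma$ a.s., and $R_{t+1}$ collects the Taylor remainder and the bias from $\hat A_t \neq \Sigma^{-1/2}$, which is $o(1)\cdot O(\|\theta_t-\theta^*\|) + O(\|\theta_t-\theta^*\|^2)$. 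From here the argument is the standard Polyak--Ruppert / weighted-averaging analysis: first upgrade the rate to $\|\theta_{t,\tau} - \theta^*\|^2 = O(\ln(nt)/(nt))$ a.s.\ by a Robbins--Siegmund / induction argument on the recursion (this is where the extra constraint $6\gamma + 2\nu > 7$ is consumed, controlling the coupling between the error of $A_t$ and that of $\theta_t$), and then establish the CLT for the weighted average $\theta_{t,\tau}$.

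Third, for the CLT I would use the by-now classical decomposition of the weighted average (as in \cite{mokkadem2011generalization,boyer2023asymptotic}): write $\theta_{t,\tau} - \theta^* = \sum_k w_{t,k}(\theta_k - \theta^*)$ with weights $w_{t,k} \propto \ln(k+1)^\tau$, substitute the recursion, perform an Abel summation to isolate the leading martingale term $-\sum_k \beta_{t,k} H^{-1}\xi_{k+1}$ (the $\hat A_t$ factor being asymptotically absorbed because $\hat A_t \to \Sigma^{-1/2}$ and $H^{-1}\Sigma^{-1/2}\cdot\tfrac1n\Sigma\cdot\Sigma^{-1/2}H^{-1} = \tfrac1n H^{-1}\Sigma H^{-1}$), check that the remainder terms are $o_P(1/\sqrt{nt})$ using the $O(\ln(nt)/(nt))$ rate and the hyperparameter constraints, and apply the martingale CLT (Lindeberg conditions follow from Assumption \ref{ass::moment} with $p > 2$). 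The variance bookkeeping gives conditional covariance $\tfrac1{nt} H^{-1}\Sigma H^{-1}$ summed over the $t$ blocks, i.e.\ $\sqrt{nt}(\theta_{t,\tau}-\theta^*) \xrightarrow{\mathcal{L}} \mathcal{N}(0, H^{-1}\Sigma H^{-1})$.

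The main obstacle I anticipate is \emph{not} the CLT mechanics — those are routine once the rate is known — but controlling the interplay between the two coupled stochastic approximations: showing that the error $\hat A_t - \Sigma^{-1/2}$, whose rate is only governed by $\gamma$ and $\beta$ via Theorem \ref{theo::rate::stream}, decays fast enough that the term $\nu_{t+1}(\hat A_t - \Sigma^{-1/2})H(\theta_t - \theta^*)$ and the bias it induces in the averaged estimate are genuinely negligible at scale $1/\sqrt{nt}$. This is precisely what forces the additional condition $6\gamma + 2\nu > 7$ (absent from Theorem \ref{theo::tlc}), and handling the indicator truncation $\mathbf{1}_{\{n g_{t+1}^T A_t g_{t+1} \le \beta_{t+1}\}}$ — showing it is eventually inactive a.s., so it does not perturb the asymptotics — requires combining the consistency of $A_t$, the rate on $\theta_{t,\tau}$, and the moment Assumption \ref{ass::moment}, exactly as in the non-streaming case but now with the $n$-dependent threshold.
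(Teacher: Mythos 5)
The paper's proof of this theorem is one line: ``very similar to Theorem~\ref{theo::tlc}.'' That earlier proof in turn does \emph{not} rerun a Polyak--Ruppert analysis from scratch; it invokes Theorem~4 of \cite{GBW2023OND}, which delivers both the $O(\ln t /t)$ rate and the CLT provided one verifies a summability/decay condition on $\left\| A_{k+1,\tau'}^{-1} - A_{k,\tau'}^{-1} \right\|_{\text{op}}$, and then verifies that condition using the a.s.\ convergence of $A_{t}$ and $A_{t,\tau'}$ to the positive matrix $\Sigma^{-1/2}$. Your proposal takes a genuinely different, self-contained route: you redo the full Abel-summation / weighted-averaging argument by hand, writing out the linearized recursion $\theta_{t+1}-\theta^* = (I_d - \nu_{t+1}\hat A_t H)(\theta_t - \theta^*) - \nu_{t+1}\hat A_t\xi_{t+1} - \nu_{t+1}\hat A_t R_{t+1}$ and then extracting the dominant martingale term. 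This is longer, but it makes the $n$-scaling mechanism fully explicit: the paper's external theorem buries this inside \cite{GBW2023OND}, whereas you lay bare that the $n$-factor in \eqref{AT::stream} exactly compensates the $1/n$ factor in the conditional covariance of the block-averaged gradient, so that $A_t$ still targets $\Sigma^{-1/2}$ while the effective martingale variance per step is $\frac{1}{n}\Sigma$, producing the $\sqrt{nt}$ normalization. Both approaches are valid; the paper's is shorter because it outsources the Polyak--Ruppert machinery.

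Two remarks on points that need tightening. First, the parenthetical algebra ``$H^{-1}\Sigma^{-1/2}\cdot\tfrac1n\Sigma\cdot\Sigma^{-1/2}H^{-1} = \tfrac1n H^{-1}\Sigma H^{-1}$'' is wrong: the left side equals $\tfrac1n H^{-2}$ since $\Sigma^{-1/2}\Sigma\Sigma^{-1/2}=I_d$. The reason the preconditioner cancels is not a ``sandwich'' identity of this kind; it is that the Abel summation produces the factor $(\hat A_t H)^{-1}\hat A_t\xi_{k+1} = H^{-1}\hat A_t^{-1}\hat A_t\xi_{k+1} = H^{-1}\xi_{k+1}$, and then $\mathrm{Var}(\xi_{k+1}\mid\mathcal F_k)\to\tfrac1n\Sigma$ gives the covariance $\tfrac1n H^{-1}\Sigma H^{-1}$ directly, with \emph{no} $\Sigma^{\pm 1/2}$ surviving. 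Your final answer is correct, but the stated justification is not. Second, the constraint $6\gamma + 2\nu > 7$ is not consumed in the CLT step; it is consumed earlier, in the proof of Theorem~\ref{theo::rate::stream}, specifically in showing that the additional streaming term $(n-1)\nabla F(\theta_{t,\tau})\nabla F(\theta_{t,\tau})^T$ (arising from the off-diagonal products of independent gradients in a block) decays fast enough — one needs a preliminary rate $\|\theta_{t,\tau}-\theta^*\|^2 = O(t^{-\mu})$ with $\mu>4-4\gamma$, which given $\mu<2\gamma+2\nu-3$ requires exactly $6\gamma+2\nu>7$. Your intuition about ``coupling'' is in the right spirit, but the actual constraint is about this extra cross-product term, which does not exist in the non-streaming recursion.
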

The proof is very similar to the one of Theorem \ref{theo::tlc} and is therefore not given.  Note that we ultimately obtain $\|\theta_{t,\tau} - \theta^{*} \|^2 = O\left(\frac{\ln N_t}{N_t}\right)$ a.s., which means the convergence rate is the same as the one of the WAFA algorithm and the estimates are still asymptotically efficient, but we drastically reduce the calculus time.

\section{Applications}\label{sec::simu}
In this section, we carry out some numerical experiments to investigate the performance of our proposed Full AdaGrad and Streaming Full AdaGrad  algorithms. 
Our investigation begins with the application of these algorithms to the linear regression model on simulated data. 
The choice of linear regression is strategic. 
Indeed, with this model we are able to obtain the exact values of the matrix $\Sigma = \Sigma(\theta^*)$, which allows us to also evaluate the performances of our estimates of $\Sigma^{-1/2}$. 
 {
We then extend our experiments
by applying our algorithms to the logistic regression framework
on seven real data sets
(see Table \ref{tab:datasets} for details).
We thereby test the adaptability of our methods in handling complex, real-life data.
}
Throughout these comparative experiments, we employ the AgaGrad algorithm defined in \eqref{DefAdaDiag} and its weighted averaged version as a benchmark. 
The Weighted Averaged AdaGrad (WAA) is formulated following the same principles as those outlined for $\theta_{t,\tau}$ in \eqref{thetaTAU}.

\subsection{Discussion about the hyper-parameters involved in the different algorithms}
Although in the previous sections, we imposed several restrictions on hyperparameters $\beta$, $\gamma$, and $\nu$ purely for technical reasons to derive the convergence rates of the algorithms theoretically, in our experiments, we simply set $\beta = \gamma = \nu = \frac{3}{4}$. 
We will demonstrate that such a choice of hyperparameters does not affect the practical performance of the algorithms. 
Furthermore, for Full AdaGrad, we choose $c_\beta = c_\gamma = c_\nu = 1$, 
but for Full AdaGrad Streaming, while $c_\beta$ and $c_\gamma$ are still set to 1, we set $c_\nu = \sqrt{n}$. 
Since Full AdaGrad Streaming updates $\theta_t$ only $\frac{1}{n}$ times as often as Full AdaGrad and AdaGrad , we increase the step size of each $\theta_t$ update in Full AdaGrad Streaming by choosing a larger $c_\nu$.
However, for the AdaGrad algorithm defined in \eqref{DefAdaDiag}, 
we set $\nu_t = t^{-1/4}$, since $\left\{\mathcal{G}_t\right\}^{-1/2}$ 
inherently converges a rate of $1/\sqrt{t}$.
For the Full AdaGrad algorithms, we always initialize $A_0$ as $0.1I_d$. 
Finally, we set $\tau,\tau'=2$ for all weighted averaged estimates. 

\subsection{Linear regression on simulated data}
We first perform experiments with simulated data, considering the linear regression model. 
Let $(X, Y)$ be a random vector taking values in $\mathbb{R}^d \times \mathbb{R}$. 
Consider the case where $X$ is a centered Gaussian random vector 
and 
$$Y = X^T \theta^* + \varepsilon,$$
where $\theta^*$ is a parameter of $\mathbb{R}^d$ 
and $\varepsilon \sim \mathcal{N}(0,1)$ is independent from $X$. 
If the matrix $\mathbb{E}[XX^{T}]$ is positive, $\theta^{*}$ is the unique minimizer of the function $F$ defined for all $h \in \mathbb{R}^p$ by
$$F(h) = \frac{1}{2} \mathbb{E}\left[(Y - h^T X)^2\right].$$
In the upcoming simulations,  we simulate i.i.d copies of $X \sim \mathcal{N}(0, \Sigma_X)$ for each sample, where $\Sigma_X$ is a positive definite covariance matrix given later. Note that in this case  the variance of the gradient satisfy $\Sigma =\Sigma_X$. 
 Parameter $\theta^*$ is randomly selected as a realization from a uniform distribution over the hypercube $[-2, 2]^{d}$. 
 We then estimate $\theta^*$ using the different algorithms and compare their performances.

\subsubsection{AdaGrad vs. Full AdaGrad}
We first compare the performance of Full AdaGrad, AdaGrad and their weighted averaged versions. 
We consider two different structures for $\Sigma_X$. 
The first one is $\Sigma_X = I_d$, 
leading to the case of independent   predictors.
The second one is $\Sigma_X = R$ with $R_{i,j} = 0.9^{\abs{i-j}}$, 
leading to strong correlation between predictors. 
To compare the two algorithms, we compute the mean-squared error of the distance from $\theta_t$ to $\theta^*$ by averaging over 100 samples. 
We initialize $\theta_0$ as $\theta_0=\theta^* + \frac{1}{2}E$, where $E \sim \mathcal{N}(0, I_d)$ for both algorithms.
Figures~\ref{graph::courbe} and \ref{graph::courbe2} depict the evolution of the mean squared error with respect to the sample size {for $d=200$ and $d=400$, respectively.}
\begin{figure}[h!]
	\centering
\includegraphics[width=0.7\textwidth]{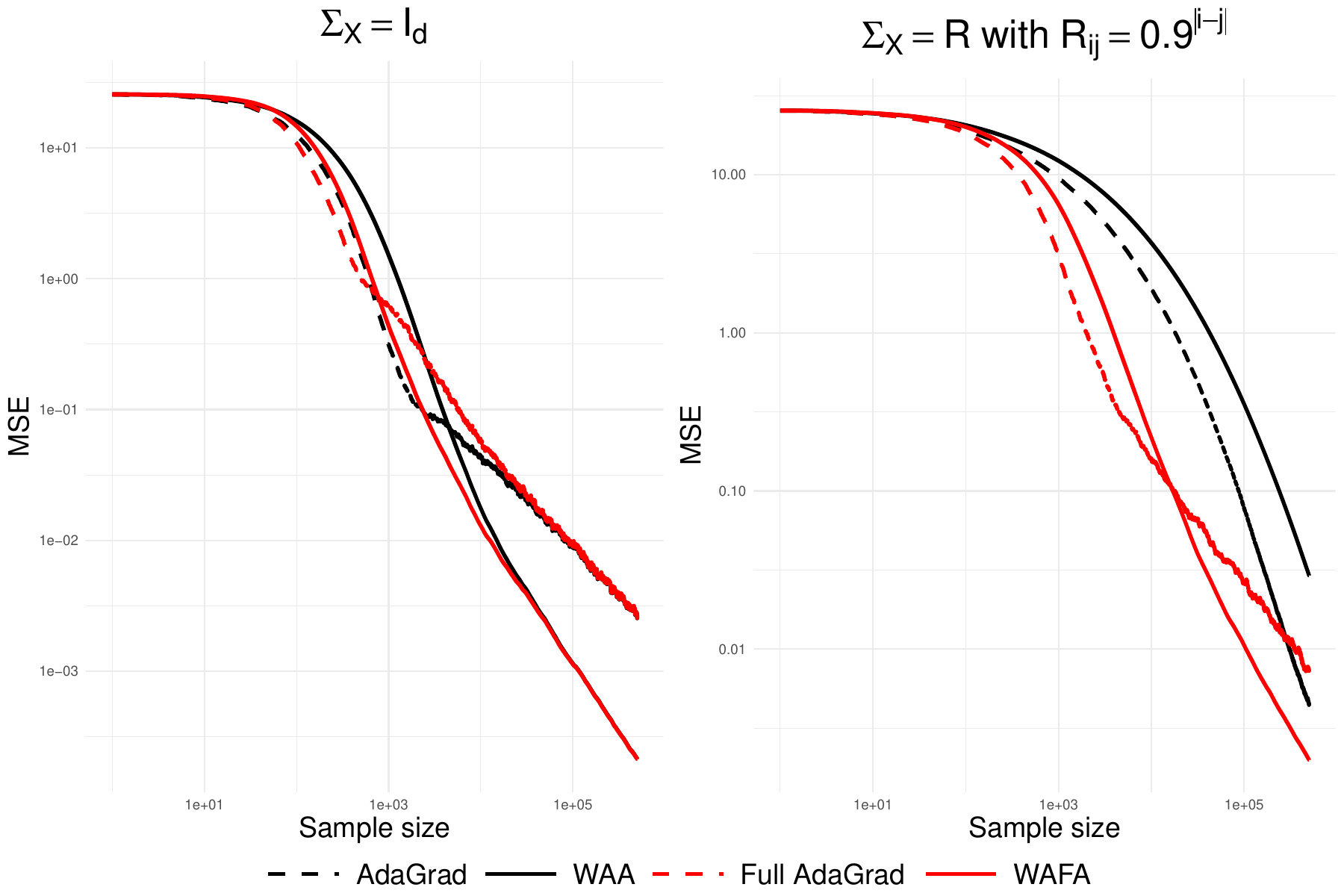}
\caption{{Linear regression case with $(N,d)=(500000,200)$. 
Mean squared error with respect to the sample size for AdaGrad and Full AdaGrad algorithms with their weighted averaged versions.
Two values  of $\Sigma_{X}$ are considered: $\Sigma_X = I_d$ (one the left) and $\Sigma_{X} = R$ (on the right).}}
\label{graph::courbe}
\end{figure}
\begin{figure}[h!]
	\centering
	\includegraphics[width=0.7\textwidth]{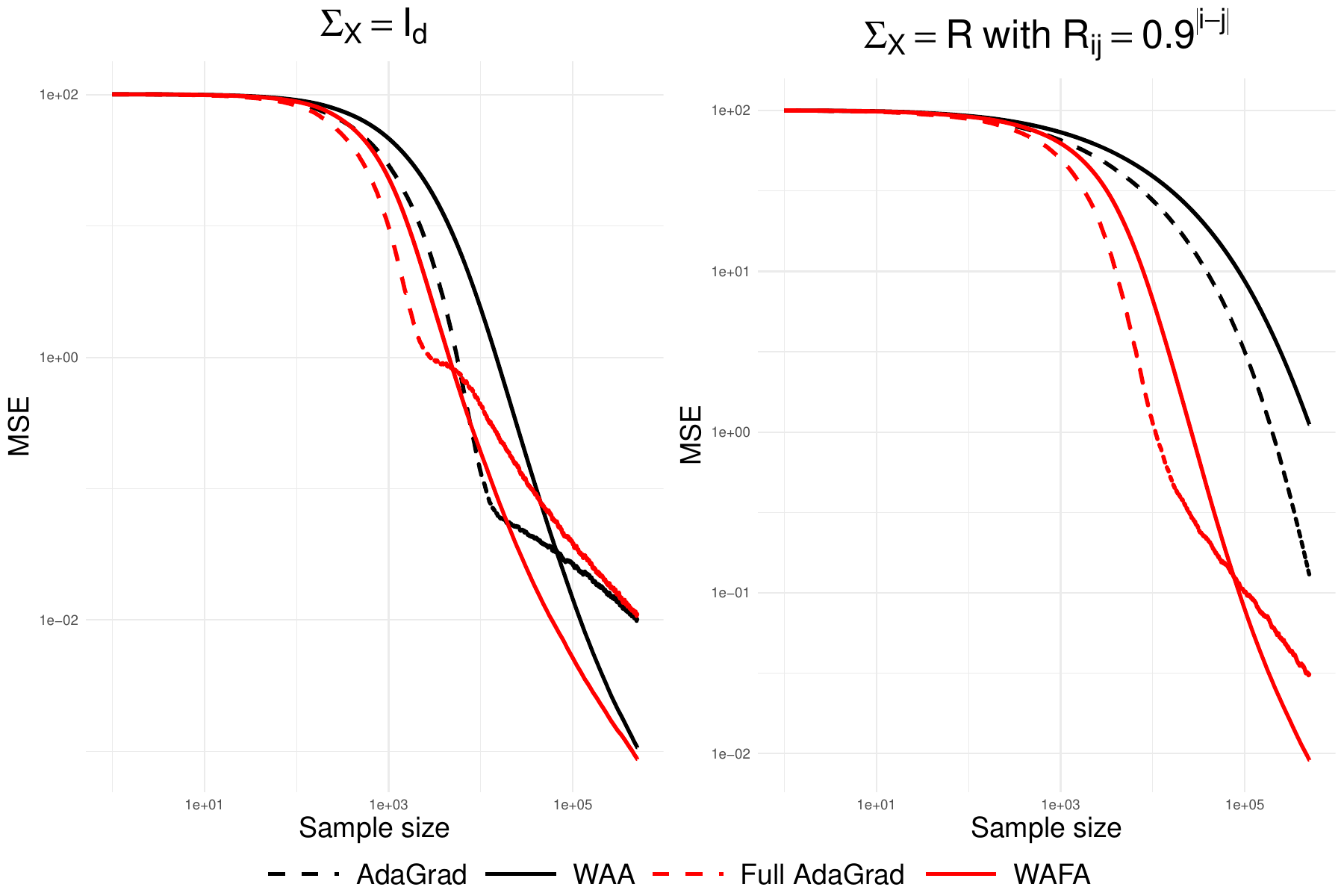}
	\caption{{Linear regression case with $(N,d)=(500000,400)$. 
		Mean squared error with respect to the sample size for AdaGrad and Full AdaGrad algorithms with their weighted averaged versions.
		Two values  of $\Sigma_{X}$ are considered: $\Sigma_X = I_d$ (one the left) and $\Sigma_{X} = R$ (on the right).}}
	\label{graph::courbe2}
\end{figure}
When $\Sigma_X$ is the identity matrix, AdaGrad and FullAdaGrad perform almost identically, and without surprise, the weighted averaged estimates enables to accelerate the convergence.
In this case, $\Sigma$ is a diagonal matrix, hence when AdaGrad only uses the diagonal elements, it does not lose any information. 
However, when there are strong correlations between predictors, 
as the off-diagonal elements of $\Sigma$ are no longer zero, Full AdaGrad significantly outperforms AdaGrad. 
{
In these cases, Full AdaGrad notably converges faster. 
From Figure \ref{graph::courbe2}, we can observe that as the dimensionality increases, Full AdaGrad’s advantage over standard AdaGrad becomes more pronounced. }
This highlights the significance of using Full AdaGrad over AdaGrad when addressing non-diagonal variance.

\subsubsection{Study of the full Adagrad streaming version.}
In this section, we demonstrate that the SWAFA can run in shorter time on the same dataset compared to WAFA, while achieving comparable results. 
We consider three different block sizes: $n=d$, $n=\sqrt{d}$, and $n=1$. 
Note that in the case $n=1$, SWAFA and WAFA  algorithms are the same.
We simulate the data in exactly the same manner as in the previous paragraph.
Through 100 samples, we plot the algorithm's running time, 
and the estimation error of $\theta$ given by $\norm{\theta_{t,\tau}-\theta^*}$ 
for the three different block sizes. 
Moreover, since we have the exact values of $\Sigma$, 
we  also evaluate the estimates of $\Sigma^{-1/2}$ 
by computing the error defined by
$\norm{A_{t,\tau}-\Sigma^{-1/2}}_{F}$.
\begin{figure}[H]
	\centering
	\includegraphics[width=0.7\textwidth]{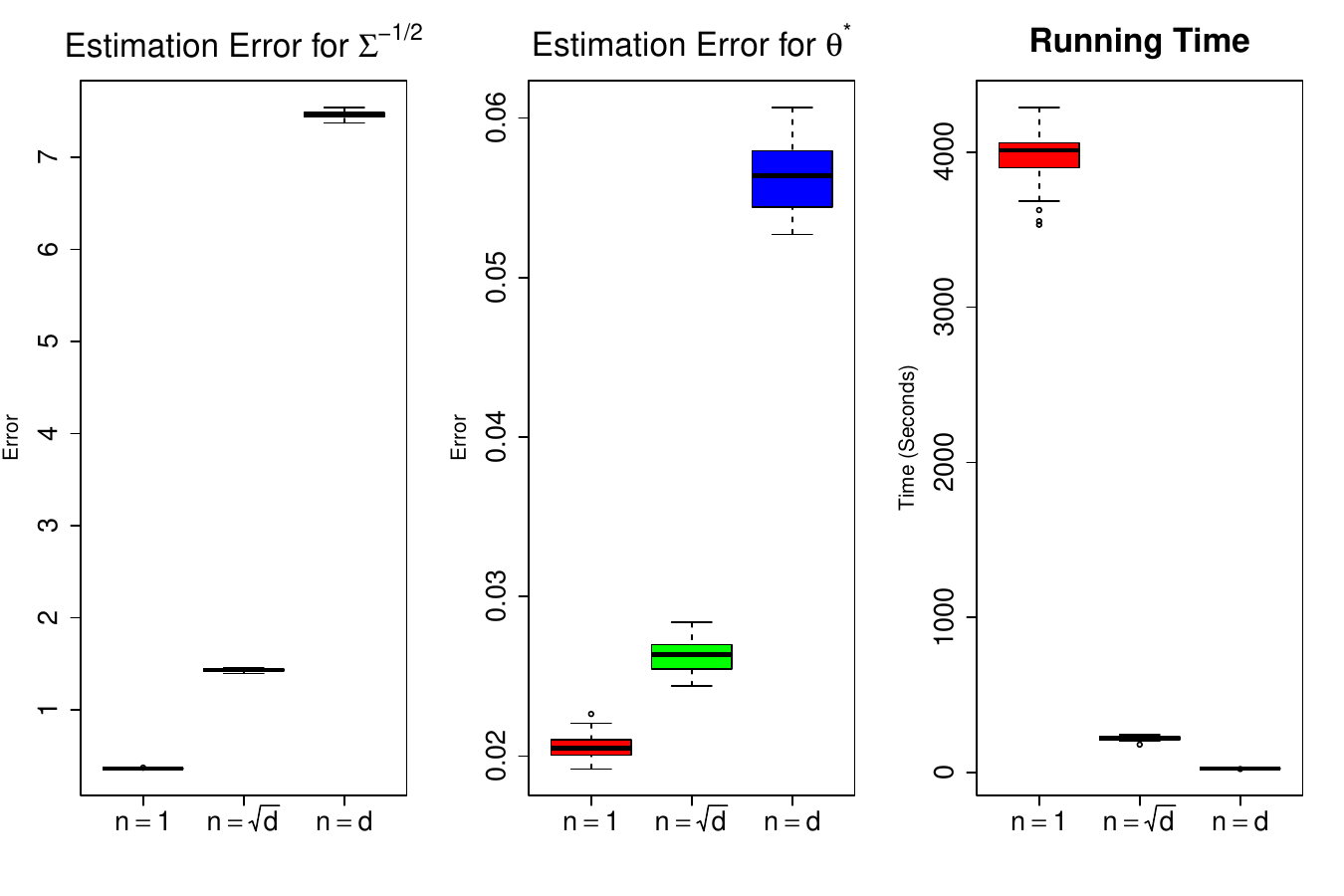}
	\caption{{From the left to the right:  boxplots  of the estimation errors for $\Sigma^{-1/2}$, boxplot of the  estimation errors for $\theta$ and 
boxplots of running time. In each case, $\Sigma_{X} = I_d$,  $(N,d)=(1000000,400)$ and three possible values of the streaming batch size are considered: $n=1,\sqrt{d},d$.}}
\label{graph::boxplot}
\end{figure}
\begin{figure}[H]
	\centering
	\includegraphics[width=0.7\textwidth]{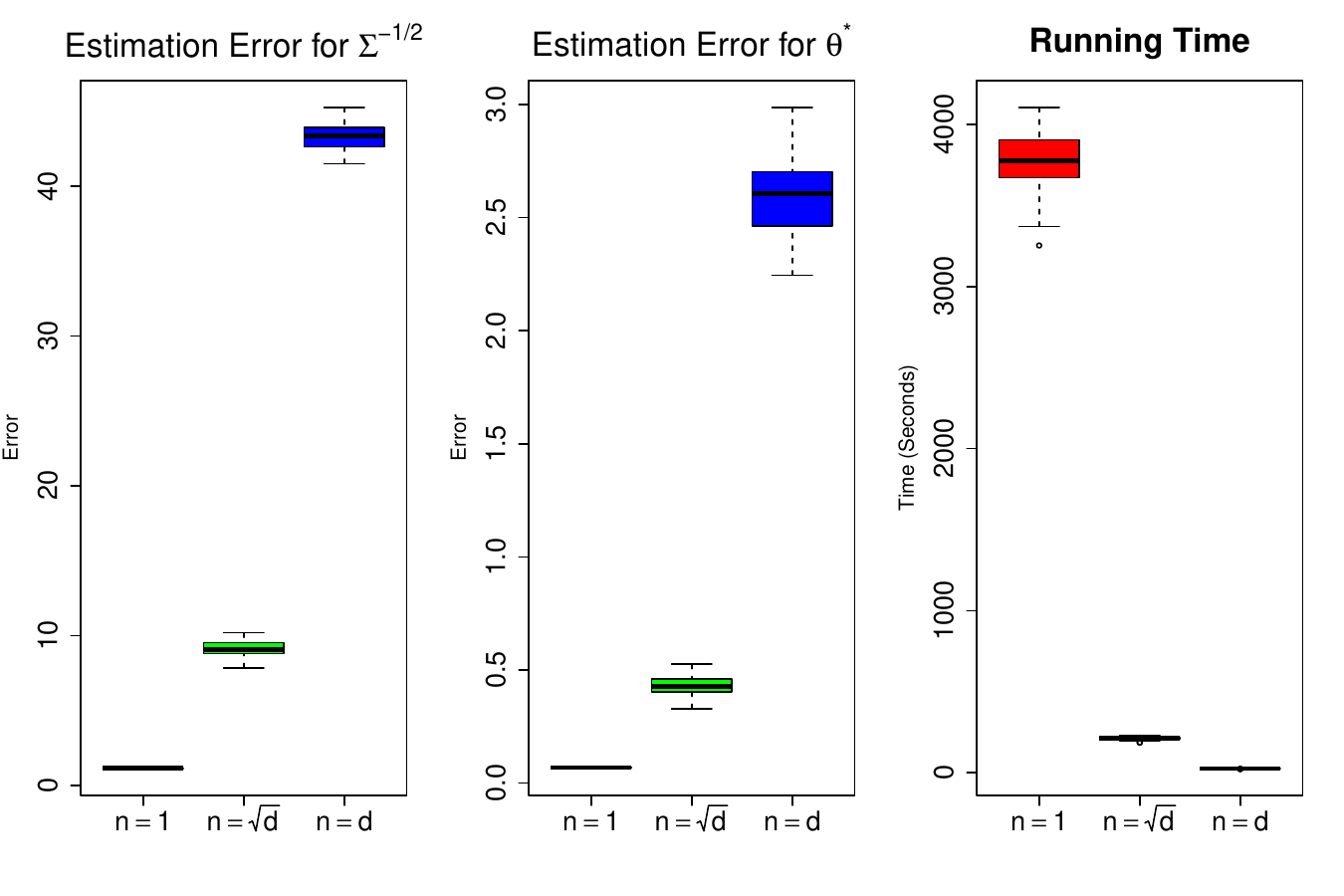}
	\caption{{From the left to the right:  boxplots  of the estimation errors for $\Sigma^{-1/2}$, boxplot of the  estimation errors for $\theta$ and 
		boxplots of running time. In each case, $\Sigma_{X} = R$, $(N,d)=(1000000,400)$ and three possible values of the streaming batch size are considered: $n=1,\sqrt{d},d$.}}
	\label{graph::boxplotId}
\end{figure}
We can see from Figures~\ref{graph::boxplot} and \ref{graph::boxplotId} 
that SWAFA significantly reduces computation time. 
Indeed, when $n = d$, 
most of the computation time is spent on reading the data and estimating the gradient. 
Considering $\Sigma_{X} = I_d$, SWAFA has a larger estimation error for 
$\Sigma^{-1/2}$ compared to WAFA, but it can still accurately estimate $\theta^*$. 
{
However, when we consider $\Sigma_{X} = R$, 
the estimation error of SWAFA with 
$n = d$ is much higher than that of WAFA, 
whereas SWAFA with 
$n = \sqrt{d}$ can strike a balance between running speed and estimation accuracy. 
In the face of ill-conditioned problems, choosing a smaller value of $n$ for SWAFA yields more stable results.
}

\subsection{Logistic regression on real data}
In this study, we evaluate several algorithms on a variety of real-world datasets. 
{
Table \ref{tab:datasets} outlines specifics about the data sets used in these experiments.
}
\begin{table}[H]
	\centering
	\label{tab:datasets}
	\begin{tabular}{lccc}
		\hline
		Dataset   & \# Features & Training Set Size & Testing Set Size \\
		\hline
		COVTYPE   & 54          & 290,506           & 290,505          \\
		MNIST     & 784         & 12,214            & 2,037           \\
		Mushrooms & 112          & 6,499             & 1,625           \\
		Adult     & 123          & 32,561            & 16,281           \\
		Epsilon   & 2000        & 400,000           & 100,000          \\
		Phishing  & 68          & 8,844               & 2,211              \\
		Web Page  & 300         & 3,470             & 46,279            \\
		\hline
	\end{tabular}
	\caption{{Description of Datasets}}
\end{table}
The \textbf{COVTYPE} dataset, originally collected by \cite{blackard1998comparison}, contains forest cover information. Note that COVTYPE includes multiple cover types. Our focus is on the most common forest cover type, “Spruce/Fir,” which accounts for about half of the dataset. We have simplified the “covertype” variable for our analysis by marking “Spruce/Fir” as 1 and all other types as 0. 
{
The \textbf{MNIST} dataset is a well-known dataset of handwritten digits \citep{lecun1998gradient}. It comprises 10 classes representing the handwritten digits 0--9, and we perform binary classification for digits 7--9. 
The \textbf{Mushrooms} dataset include various features of mushrooms for edibility classification \citep{lichman2013uci}. \textbf{Adult} contains census data used to predict income levels \citep{dua2017uci}, with eight categorical features one-hot encoded into binary features. 
\textbf{Epsilon} is part of the LIBSVM data collection \citep{chang2011libsvm}, which is a large-scale dataset for binary classification tasks. 
\textbf{Phishing} includes features of websites for detecting phishing attacks \citep{phishing_websites_327}, which contains only categorical features. We use binary encoding to generate feature vectors. 
\textbf{Web Page} is used for categorizing webpages, originally collected by \cite{platt199912} in his work on SVM-based text classification. 
These datasets are widely used to benchmark binary classification algorithms \citep{toulis2017asymptotic,juan2016field,yuan2012improved}. While MNIST, Adult, Epsilon, and Web Page come with predefined training and testing splits, we partition COVTYPE, Mushrooms, and Phishing equally into training and testing sets.
} 
The objective is to use logistic regression model for predicting the binary dependent variable of each dataset. 
We apply AdaGrad, Full AdaGrad, WAA, WAFA, SWAFA and SGD. 
For each algorithm, we initialize $\theta_0 = (0, \ldots,0)$. 
{
Table \ref{tab:acc}
presents for each data set and each algorithm the percentage of accuracy.
	SWAFA is split into two variants:  	$n=d$ and $n=\sqrt{d}$.
}

\begin{table}[H]
	\centering
	\label{tab:acc}
	\small
	\begin{tabular}{llccccccc}
		\hline
		\multirow{2}{*}{Data set} & \multirow{2}{*}{Set} & \multirow{2}{*}{AdaGrad} & \multirow{2}{*}{WAA} & \multirow{2}{*}{Full AdaGrad} & \multirow{2}{*}{WAFA} & \multicolumn{2}{c}{SWAFA} & \multirow{2}{*}{SGD} \\
		&                     &                          &                           &                     &                     & $n=d$ & $n=\sqrt{d}$ &                      \\
		\hline
		\multirow{2}{*}{COVTYPE}   & Train & 75.71 & 75.56 & 75.67 & 75.58 & 75.59 & 75.58 & 75.60  \\
		& Test  & 75.74 & 75.58 & 75.69 & 75.61 & 75.62 & 75.61 & 75.69  \\
		\hline
		\multirow{2}{*}{MNIST}     & Train & 95.77 & 95.60 & 95.34 & 95.86 & 92.92 & 95.64 & 95.04  \\
		& Test  & 95.88 & 95.97 & 95.68 & 95.88 & 93.57 & 95.83 & 95.14  \\
		\hline
		\multirow{2}{*}{Mushrooms} & Train & 99.60 & 98.89 & 99.15 & 98.54 & 97.75 & 98.48 & 97.54  \\
		& Test  & 99.75 & 99.02 & 99.32 & 98.83 & 98.34 & 98.83 & 98.15  \\
		\hline
		\multirow{2}{*}{Adult}     & Train & 84.76 & 84.75 & 84.78 & 84.69 & 84.42 & 84.76 & 83.98  \\
		& Test  & 85.09 & 85.09 & 85.14 & 85.19 & 84.79 & 85.17 & 84.08  \\
		\hline
		\multirow{2}{*}{Epsilon}   & Train & 87.18 & 86.54 & 85.98 & 84.81 & 77.58 & 83.16 & 71.79  \\
		& Test  & 87.05 & 86.40 & 85.86 & 84.69 & 77.46 & 82.99 & 71.66  \\
		\hline
		\multirow{2}{*}{Phishing}  & Train & 93.06 & 93.05 & 92.87 & 92.65 & 90.26 & 92.48 & 88.08  \\
		& Test  & 92.76 & 92.54 & 92.34 & 92.22 & 89.69 & 91.95 & 87.92  \\
		\hline
		\multirow{2}{*}{Web Page}  & Train & 97.00 & 97.00 & 96.95 & 97.12 & 96.92 & 96.92 & 96.92  \\
		& Test  & 97.04 & 97.05 & 97.04 & 97.14 & 97.04 & 97.04 & 97.04  \\
		\hline
	\end{tabular}
	\caption{{Train and Test accuracy (in \%)  	for AdaGrad, WAA, Full AdaGrad, WAFA, 
SWAFA ($n = d$ and $n = \sqrt{d}$) and SGD algorithmes.}}
\end{table}

{
results of Table \ref{tab:acc}
show that the proposed methods achieve higher classification rates compared to SGD, 
but the difference from AdaGrad is not significant. 
As discussed earlier, the advantage of Full AdaGrad is only evident when there is strong correlation among different dimensions of the gradient, which is not the case for these classical datasets.
} 
Overall, the results indicate that the proposed methods are applicable to real-world data.
 {
 Furthermore, when setting \( n = d \) for Streaming Full AdaGrad, we observe a slight drop in accuracy on some datasets where the dimension is too large, particularly on Epsilon. 
 However, when choosing \( n = \sqrt{d} \), the results remain more stable across all datasets. 
 Therefore, in practical applications, if the sample size is not significantly larger than the dimensionality, 
 selecting $ n = \sqrt{d}$ may be a more robust choice. 
 }

 \section*{Conclusion}
This work propose novel approaches to Full AdaGrad algorithms. The core innovation   lies in applying a Robbins-Monro type algorithm for estimating the inverse square root of the variance of the gradient. By proving the convergence rate of the proposed estimates, we lay a theoretical foundation that establishes the reliability of our approach. Through numerical studies, we have shown that our approach offers substantial advantages over traditional AdaGrad algorithms that rely solely on diagonal elements. Moreover, we introduce a streaming variant of our method, which further reduces computational complexity. We show that the streaming estimates are also asymptotically efficient. An extension of this work would be to understand the possible impact of the dimension of the behavior of the estimates, maybe through a non asymptotic theoretical study. 

 \section*{Aknowledgements} 
The authors would like to thank Guillaume Sallé for his assistance in improving the proofs.

\section{Proofs}\label{proof}
To simplify our notation, in the following we denote $\widehat{\Sigma}_t = g_t(\theta_{t-1,\tau})g_t(\theta_{t-1,\tau})^T$ ,
$W_t := g_{t+1}\pa{\theta_{t,\tau}}g_{t+1}\pa{\theta_{t,\tau}}^TA_t$ and  $Q_t =  A_t^{1/2}\widehat{\Sigma}_{t+1}A_t^{1/2}$ with $ \norm{Q_t}_F = g_{t+1}\pa{\theta_{t,\tau}}^TA_tg_{t+1}\pa{\theta_{t,\tau}} $. 
\subsection{Proof of Theorem \ref{theo::consistency}}\label{sec::proConsisTheta}
{
The goal is to apply Theorem 1 in \cite{GBW2023OND}. Notably, the proof of this theorem does not rely on any continuity assumption for the function $\Sigma$. It states that, under our assumptions, if we can control the eigenvalues of $A_{t,\tau'}$, specifically if  
\[
\lambda_{\max} \left( A_{t,\tau '} \right) = \mathcal{O} \left( t^{1-\gamma } \right) \quad a.s. \quad \text{and} \quad \lambda_{\min} \left( A_{t,\tau '} \right) = \mathcal{O} \left( \beta_{t} \right) \quad a.s.,
\]
then the estimates $\theta_{t}$ are strongly consistent, and consequently, so are $\theta_{t,\tau}$.  
 To apply this theorem, we first  control the eigenvalues of the random stepsequence $A_{t} $, and so, without requiring knowledge on the behavior of the estimate $\theta_{t}$.  In this aim let us recall that $A_{t}$ is defined recursively as follows:
 \begin{equation*}
	A_{t+1} = A_t - \gamma_{t+1}\pa{A_tg_{t+1}\pa{\theta_{t}}g_{t+1}\pa{\theta_{t}}^TA_t -Id}\mathbf{1}_{\acco{g_{t+1}\pa{\theta_{t}}^TA_tg_{t+1}\pa{\theta_{t}}\le\beta_{t+1}}},
\end{equation*}
}

\paragraph*{Study on the largest eigenvalue of $A_t$ and $A_{t,\tau'}$.} 
It is obvious that the matrix $A_t\widehat{\Sigma}_{t+1}A_t$ is positive semi-definite, so that
\begin{align*}
	\lambda_{\max}(A_{t+1}) \le \lambda_{\max}\pa{A_t + \gamma_{t+1}I_d\mathbf{1}_{\acco{\norm{Q_t}_F\le\beta_{t+1}}}} 
	\le \lambda_{\max}\pa{A_t + \gamma_{t+1}I_d}
\end{align*}
Therefore,

\begin{equation}
\label{maj::an}\lambda_{\max}(A_{t+1}) \le \lambda_{\max}\pa{A_0} +\sum_{k=0}^{t}\gamma_{k+1} = \mathcal{O}\pa{t^{1-\gamma}} , 
\end{equation}
and one can derive, since $A_{t,\tau '}$ is a convex combination of $A_{k}, k=0 , \ldots , t $, that $\lambda_{\max} \left( A_{t,\tau '} \right) = O \left( t^{1-\gamma}\right) $ a.s.

\paragraph*{Study on the smallest eigenvalue of $A_t$ and $A_{t,\tau '}$.}
We now provide an asymptotic bound of $\lambda_{\min}(A_t)^{-1}$, without necessitating knowledge on the behavior of the estimate $\theta_{t}$.  
Thanks to the truncation term ($\mathbf{1}_{\left\| Q_{t} \right\|_{F} \leq \beta_{t+1}}$), {the matrix $I_d - \gamma_{t+1}A_t^{1/2}\widehat{\Sigma}_{t+1}A_t^{1/2}\mathbf{1}_{\acco{\norm{Q_t}_F\le\beta_{t+1}}} $ is symmetric and positive, and one can easily verify that $A_t$ is positive  for all $t\geq 0$ (one can see equality \eqref{decompositionAt} to be convinced)}. We now give a better lower bound of its eigenvalues. First, remark that since $A_{t}$ is symmetric and positive, one can rewrite $A_{t+1}$  as
\begin{align}
	A_{t+1} &= A_t - \gamma_{t+1}A_t\widehat{\Sigma}_{t+1}A_t\mathbf{1}_{\acco{\norm{Q_t}_F\le\beta_{t+1}}} + \gamma_{t+1}I_d\mathbf{1}_{\acco{\norm{Q_t}_F\le\beta_{t+1}}} \nonumber\\
	&= A_t^{1/2}\pa{I_d - \gamma_{t+1}A_t^{1/2}\widehat{\Sigma}_{t+1}A_t^{1/2}\mathbf{1}_{\acco{\norm{Q_t}_F\le\beta_{t+1}}}}A_t^{1/2} + \gamma_{t+1}I_d\mathbf{1}_{\acco{\norm{Q_t}_F\le\beta_{t+1}}} \label{decompositionAt}
\end{align}
Note that by definition of $\widehat{\Sigma}_{t+1}$, the matrix $A_t^{1/2}\widehat{\Sigma}_{t+1}A_t^{1/2}$ is of rank $1$ and $$\norm{A_t^{1/2}\widehat{\Sigma}_{t+1}A_t^{1/2}}_{op} = \norm{A_t^{1/2}\widehat{\Sigma}_{t+1}A_t^{1/2}}_F = \norm{Q_t}_F.$$
Thus, since for all symmetric and positive matrices $A,B$ one has $\lambda_{\min}(ABA) \geq \lambda_{\min}\left( A^{2} \right) \lambda_{\min}(B)$, 
\begin{align*}
	\lambda_{\min}(A_{t+1}) &\ge \lambda_{\min}\pa{A_t^{1/2}\pa{I_d - \gamma_{t+1}A_t^{1/2}\widehat{\Sigma}_{t+1}A_t^{1/2}\mathbf{1}_{\acco{\norm{Q_t}_F\le\beta_{t+1}}}}A_t^{1/2}} + \gamma_{t+1}\mathbf{1}_{\acco{\norm{Q_t}_F\le\beta_{t+1}}}\\
	&\ge \lambda_{\min}(A_t)\pa{1-\gamma_{t+1}\norm{A_t^{1/2}\widehat{\Sigma}_{t+1}A_t^{1/2}}_{op}\mathbf{1}_{\acco{\norm{Q_t}_F\le\beta_{t+1}}}} + \gamma_{t+1}\mathbf{1}_{\acco{\norm{Q_t}_F\le\beta_{t+1}}}\\
	&\ge \lambda_{\min}(A_t)\pa{1-\gamma_{t+1}\beta_{t+1}} + \gamma_{t+1}\mathbf{1}_{\acco{\norm{Q_t}_F\le\beta_{t+1}}} .
\end{align*}
Let us now prove by induction that $\lambda_{\min} \left( A_{t} \right) \geq \frac{\lambda_{0}}{\beta_{t+1}}$ where $\lambda_{0} := \min \left\lbrace 1 , \lambda_{\min} \left( A_{0} \right) \beta_{1}^{-1} \right\rbrace$. By definition of $\lambda_{0}$, the property is clearly satisfied for $t=0$ and we suppose that is now the case for $t \geq 0$, i.e that $\lambda_{\min}(A_{t})>\frac{1}{\beta_{t+1}}$. Then, if $\norm{Q_t}_F>\beta_{t+1}$, one has
$$\lambda_{\min}(A_{t+1}) = \lambda_{\min}(A_{t}) \geq \frac{1}{\beta_{t+1}}>\frac{1}{\beta_{t+2}}.$$
If $\norm{Q_t}_F\le\beta_{t+1}$, one has
\begin{align*}
	\lambda_{\min}(A_{t+1}) &\ge \lambda_{\min}(A_t)\pa{1-\gamma_{t+1}\beta_{t+1}} + \gamma_{t+1}\\
	&\ge \frac{\lambda_{0}}{\beta_{t+1}}\pa{1-\gamma_{t+1}\beta_{t+1}} + \gamma_{t+1}\\
	&\geq \frac{\lambda_{0}}{\beta_{t+1}},
\end{align*}
where the last inequality comes from the fact that $\lambda_{0} \leq 1$. {Then, 
\[
\lambda_{\max} \left( A_{t}^{-1} \right) = O( \beta_{t}) \quad a.s.
\]
and since $A_{t,\tau'}$ is a convex combination of $A_{k}$, $k= 0 , \ldots , t$, it comes} 
\begin{equation}\label{min::lambda}
\lambda_{\max} \left( A_{t,\tau '}^{-1} \right) = O( \beta_{t}) \quad a.s.
\end{equation}
Then, applying  Theorem 1 in \cite{GBW2023OND}, it comes that $\theta_{t}$ and $\theta_{t,\tau}$ converge almost surely to $\theta^{*}$.

\subsection{Proof of Theorem \ref{theo::rate}}
Let $(D_t)$ be a sequence defined by $D_t := A_{t}\Sigma_{t-1}A_{t} - I_d$ where $\Sigma_t := \Sigma\pa{\theta_{t,\tau}}$.
By definition of $A_{t+1}$, we have
\begin{align*}
	D_{t+1}&=\pa{A_t-\gamma_{t+1}\pa{A_tW_t-I_d}\mathbf{1}_{\acco{\norm{Q_t}_F\le\beta_{t+1}}}}\Sigma_{t}\pa{A_t-\gamma_{t+1}\pa{A_tW_t-I_d}\mathbf{1}_{\acco{\norm{Q_t}_F\le\beta_{t+1}}}}  - I_d\\
&=A_t\Sigma_{t}A_t - I_d -\gamma_{t+1}\pa{\pa{A_tW_t-I_d}\Sigma_{t} A_t  + A_t\Sigma_{t} \pa{A_tW_t-I_d}}\mathbf{1}_{\acco{\norm{Q_t}_F\le\beta_{t+1}}}\\
&\quad+\gamma_{t+1}^2A_tW_t\Sigma_{t} A_tW_t\mathbf{1}_{\acco{\norm{Q_t}_F\le\beta_{t+1}}}-\gamma^2_{t+1}\pa{A_tW_t-I_d}\Sigma_{t}\mathbf{1}_{\acco{\norm{Q_t}_F\le\beta_{t+1}}}\\
&\quad - \gamma^2_{t+1}\Sigma_{t}\pa{A_tW_t-I_d}\mathbf{1}_{\acco{\norm{Q_t}_F\le\beta_{t+1}}} + \gamma^2_{t+1}\Sigma_{t}\mathbf{1}_{\acco{\norm{Q_t}_F\le\beta_{t+1}}}.
\end{align*}
In all the sequel let us denote 
\begin{align*}
R_{t} & =  \left\|  \gamma_{t+1}^2A_tW_t\Sigma_{t} A_tW_t\mathbf{1}_{\acco{\norm{Q_t}_F\le\beta_{t+1}}}\right\|_{F} + \left\| \gamma^2_{t+1}\pa{A_tW_t-I_d}\Sigma_{t}\mathbf{1}_{\acco{\norm{Q_t}_F\le\beta_{t+1}}} \right\|_{F}\\
&   + \left\| \gamma^2_{t+1}\Sigma_{t}\pa{A_tW_t-I_d}\mathbf{1}_{\acco{\norm{Q_t}_F\le\beta_{t+1}}}\right\|_{F} + \left\|  \gamma^2_{t+1}\Sigma_{t}\mathbf{1}_{\acco{\norm{Q_t}_F\le\beta_{t+1}}} \right\|_{F} .
\end{align*}
Then, using Cauchy-Schwarz inequality, it comes
\begin{align*}
\left\| D_{t+1} \right\|_{F}^{2} & \leq  \left\| A_{t}\Sigma_{t}A_{t} - I_{d} \right\|_{F}^{2} +  R_{t}^{2} + 2 R_{t}    \left\| A_{t}\Sigma_{t}A_{t} - I_{d} \right\|_{F}  \\
& + 2 R_{t}\gamma_{t+1} \left\| \pa{A_tW_t-I_d}\Sigma_{t} A_t -A_t\Sigma_{t} \pa{A_tW_t-I_d}\right\|_{F}\mathbf{1}_{\acco{\norm{Q_t}_F\le\beta_{t+1}}} \\
&  -4 \gamma_{t+1} \left\langle \pa{A_tW_t-I_d}\Sigma_{t} A_t \mathbf{1}_{\acco{\norm{Q_t}_F\le\beta_{t+1}}}, A_{t}\Sigma_{t}A_{t} - I_{d} \right\rangle_{F} \\
& + 2 \gamma_{t+1}^{2}\left\| \pa{A_tW_t-I_d}\Sigma_{t} A_t + A_t\Sigma_{t} \pa{A_tW_t-I_d}\right\|_{F}^{2}\mathbf{1}_{\acco{\norm{Q_t}_F\le\beta_{t+1}}} .
\end{align*} 
Then, applying inequality $ab \leq \frac{1}{2c}a^{2} + \frac{c}{2}b^{2}$ (with $a,b,c > 0$), it comes
\begin{align*}
2 R_{t}    \left\| A_{t}\Sigma_{t}A_{t} - I_{d} \right\|_{F} \leq \frac{1}{\gamma_{t+1}^{2}\beta_{t+1}^{2}} R_{t}^{2} + \gamma_{t+1}^{2}\beta_{t+1}^{2}  \left\| A_{t}\Sigma_{t}A_{t} - I_{d} \right\|_{F}^{2}
\end{align*}
and
\begin{align*}
2 R_{t} & \gamma_{t+1} \left\| \pa{A_tW_t-I_d}\Sigma_{t} A_t -A_t\Sigma_{t} \pa{A_tW_t-I_d}\right\|_{F}\mathbf{1}_{\acco{\norm{Q_t}_F\le\beta_{t+1}}} \\
 & \leq R_{t}^{2} + \gamma_{t+1}^{2} \left\| \pa{A_tW_t-I_d}\Sigma_{t} A_t +A_t\Sigma_{t} \pa{A_tW_t-I_d}\right\|_{F}^{2}\mathbf{1}_{\acco{\norm{Q_t}_F\le\beta_{t+1}}} .
\end{align*}
Then
\begin{align*}
\left\| D_{t+1} \right\|_{F}^{2} & \leq \left( 1+   \gamma_{t+1}^{2}\beta_{t+1}^{2} \right) \left\| A_{t}\Sigma_{t}A_{t} - I_{d} \right\|_{F}^{2} + \left( 2 + \frac{1}{\gamma_{t+1}^{2}\beta_{t+1}^{2}} \right) R_{t}^{2} \\
&  -4 \gamma_{t+1} \left\langle \pa{A_tW_t-I_d}\Sigma_{t} A_t \mathbf{1}_{\acco{\norm{Q_t}_F\le\beta_{t+1}}}, A_{t}\Sigma_{t}A_{t} - I_{d} \right\rangle_{F} \\
& + 2 \gamma_{t+1}^{2}\left\| \pa{A_tW_t-I_d}\Sigma_{t} A_t + A_t\Sigma_{t} \pa{A_tW_t-I_d}\right\|_{F}^{2}\mathbf{1}_{\acco{\norm{Q_t}_F\le\beta_{t+1}}}
\end{align*} 
The aim now, considering the filtration $\left( \mathcal{F}_{t} \right)$ defined for all $t \geq 1$ by $\mathcal{F}_{t} = \sigma \left( X_{1} , \ldots , X_{t} \right)$,  is to use Robbins-Siegmund theorem \citep{robbins1971convergence}. To do so, we have to upper bound the conditional expectation of each term on the right-hand side of previous inequality. First,the following lemma which  gives the rate of convergence of  $\mathbb{E} \left[ R_{t}^{2} |\mathcal{F}_{t} \right]$.
\begin{lem}\label{lem0}
Under the same assumptions as in Theorem \ref{theo::rate} there are random positive sequences $R_{1,t}$ and $R_{t}^{*}$
satisfying
\[
\sum_{t \geq 1 } R_{1,t} < + \infty \quad a.s. \quad \quad \text{and} \quad \quad 
\sum_{t \geq 1 } R_{t}^{*} < + \infty \quad a.s.
\]
such that
\begin{align*}
\mathbb{E} \left[ R_{t}^{2} |\mathcal{F}_{t} \right]\leq \left( 1+ R_{1,t} \right) \left\| D_{t} \right\|_{F}^{2} + R_{t}^{*}.
\end{align*}

\end{lem}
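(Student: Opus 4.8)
The plan is to bound $\mathbb{E}[R_t^2\,|\,\mathcal{F}_t]$ term by term, using the crude deterministic bounds coming from the truncation together with the moment assumption on the gradient. Recall that $R_t$ is a sum of four Frobenius norms, each carrying a factor $\gamma_{t+1}^2$, and each multiplied by the indicator $\mathbf{1}_{\{\|Q_t\|_F\le\beta_{t+1}\}}$. The key observations are: (i) on the event $\{\|Q_t\|_F\le\beta_{t+1}\}$ we have $\|A_tW_t\|_{op}=\|A_t^{1/2}(A_t^{1/2}\widehat\Sigma_{t+1}A_t^{1/2})A_t^{1/2}\|_{op}\le \lambda_{\max}(A_t)\,\|Q_t\|_F\le \lambda_{\max}(A_t)\beta_{t+1}$, which by \eqref{maj::an} is $O(t^{1-\gamma}\beta_{t+1})=O(t^{1-\gamma+\beta})$; and (ii) $\|\Sigma_t\|_{op}=\|\Sigma(\theta_{t,\tau})\|_{op}$ is controlled via the $L_\Sigma$-Lipschitz Assumption~\ref{ass::sigma::lip} by $\|\Sigma(\theta^*)\|_{op}+L_\Sigma\|\theta_{t,\tau}-\theta^*\|$, and the latter is $o(1)$ a.s.\ (indeed $O(\sqrt{\ln t/t^\nu})$ by Theorem~\ref{theo::rate}, but for this lemma we only need it to be a.s.\ bounded, which already follows from Theorem~\ref{theo::consistency}). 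Combining these, each of the four terms in $R_t$ is bounded on the relevant event by $\gamma_{t+1}^2$ times a polynomial-in-$t$ factor of the type $\|A_tW_t\|_{op}^2\|\Sigma_t\|_{op}\|A_t\|_{op}^2$ etc., i.e.\ ultimately by a deterministic sequence times $\|A_t\|_{op}^{4}$ or similar; the dominant one is of order $\gamma_{t+1}^2 (t^{1-\gamma+\beta})^2 \cdot O(1) = O(t^{-2\gamma}t^{2-2\gamma+2\beta}) = O(t^{2-4\gamma+2\beta})$.

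Next I would split the bound on $\mathbb{E}[R_t^2\,|\,\mathcal{F}_t]$ into a part proportional to $\|D_t\|_F^2$ and a summable remainder $R_t^*$. The idea is that the factor $I_d$ appearing inside $A_tW_t-I_d$ and the standalone $\Sigma_t$ term produce pieces that do not vanish, but the ``main'' pieces of $R_t$ involve $A_tW_t$, which one rewrites using $A_t\widehat\Sigma_{t+1}A_t = D_t + I_d - A_t(\Sigma_t-\widehat\Sigma_{t+1})A_t$ — no wait, more directly one writes $A_tW_t - I_d$ and notes $\mathbb{E}[\widehat\Sigma_{t+1}\,|\,\mathcal{F}_t]=\Sigma_t$, so that in conditional expectation the leading contribution of the squared terms is $A_t\Sigma_tA_t-I_d=D_t$ up to the (summable, by $\gamma_{t+1}^2$ prefactor and the conditions $2\gamma-2\beta>1$ etc.) higher-order fluctuation terms. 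Concretely, I would use $\|A_tW_t-I_d\|_F^2 \le 2\|A_t\widehat\Sigma_{t+1}A_t-I_d\|_F^2 + 2\|A_t(\widehat\Sigma_{t+1}-\Sigma_t)A_t\|_F^2$ only where helpful, and otherwise bound the fourth-order-in-$\widehat\Sigma_{t+1}$ terms by their conditional moments using Assumption~\ref{ass::moment} with $p$ large enough that $\mathbb{E}[\|g_{t+1}(\theta_{t,\tau})\|^{2p}\,|\,\mathcal{F}_t]$ is a.s.\ bounded (here the consistency of $\theta_{t,\tau}$ and the alternative form of Assumption~\ref{ass::moment} enter). After taking conditional expectations one collects: a coefficient $1+R_{1,t}$ in front of $\|D_t\|_F^2$ where $R_{1,t}=O(\gamma_{t+1}^2\beta_{t+1}^2)=O(t^{-2\gamma+2\beta})$, summable because $2\gamma-2\beta>1$; and a remainder $R_t^*$ gathering all terms with no $\|D_t\|_F^2$ factor, of order at most $\gamma_{t+1}^2\beta_{t+1}^k$ for suitable $k$, times the bounded conditional moment, which is summable under the stated hypotheses (in particular using $p>\frac{8-8\gamma}{\gamma+\beta-1}$ and $\gamma+\beta>1$ to absorb the worst polynomial factor $t^{2-4\gamma+2\beta}$ multiplied by the moment bound).

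The main obstacle will be the bookkeeping for the fourth-order term $\gamma_{t+1}^2 A_tW_t\Sigma_tA_tW_t$: one must verify that its conditional expectation, after extracting $\|D_t\|_F^2$ where possible, leaves a genuinely summable remainder. This is where the precise exponent condition $p>\frac{8-8\gamma}{\gamma+\beta-1}$ is needed — it is exactly what makes $\gamma_{t+1}^2 \lambda_{\max}(A_t)^{?}\beta_{t+1}^{?}$ times $\mathbb{E}[\|g\|^{4}\,|\,\mathcal{F}_t]$ (or higher powers, bounded by the $2p$-th moment via Jensen) summable once one substitutes $\gamma_{t+1}=c_\gamma t^{-\gamma}$, $\beta_{t+1}=c_\beta t^\beta$ and $\lambda_{\max}(A_t)=O(t^{1-\gamma})$. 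I would therefore organize the proof as: (1) deterministic bounds on each summand of $R_t$ on the truncation event in terms of $\lambda_{\max}(A_t)$, $\|\Sigma_t\|_{op}$, $\beta_{t+1}$, $\gamma_{t+1}$ and powers of $\|g_{t+1}(\theta_{t,\tau})\|$; (2) take conditional expectation, using $\mathbb{E}[\widehat\Sigma_{t+1}|\mathcal{F}_t]=\Sigma_t$ and Assumption~\ref{ass::moment} in its truncated form with the a.s.\ consistency of $\theta_{t,\tau}$; (3) separate the $\|D_t\|_F^2$-proportional part from the rest, identifying $R_{1,t}$ and $R_t^*$; (4) check summability of $R_{1,t}$ (immediate from $2\gamma-2\beta>1$) and of $R_t^*$ (the delicate step, using the lower bound on $p$ and $\gamma+\beta>1$). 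Since the statement only asserts existence of such sequences, no sharp constants are needed, so generous use of $a,b,c$-type Young inequalities is permitted throughout.
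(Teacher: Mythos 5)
Your high-level plan --- expand $R_t^2$ into its four pieces, use the truncation and the moment assumption, then peel off a summable coefficient of $\|D_t\|_F^2$ and a summable remainder --- does match the structure of the paper's proof, which handles the three distinct pieces through Lemmas \ref{lem1}, \ref{lem2} and \ref{lem3}. However, your central decomposition
\[
\|A_tW_t-I_d\|_F^2 \le 2\|A_t\widehat{\Sigma}_{t+1}A_t-I_d\|_F^2 + 2\|A_t(\widehat{\Sigma}_{t+1}-\Sigma_t)A_t\|_F^2
\]
is vacuous as written: since $W_t=\widehat{\Sigma}_{t+1}A_t$, the left-hand side and the first term on the right are the same quantity. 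You presumably meant $\|A_t\Sigma_tA_t-I_d\|_F^2$ there, but even after that correction your plan says nothing about how to control $\mathbb{E}\bigl[\|A_t(\widehat{\Sigma}_{t+1}-\Sigma_t)A_t\|_F^2\,|\,\mathcal{F}_t\bigr]$, which a priori costs a factor $\|A_t\|_F^4=O(t^{4-4\gamma})$ by \eqref{maj::an} and must still be traded against $\|D_t\|_F^2$.

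The paper's actual device for this conversion --- and what is missing from your plan --- is inequality \eqref{maj::an::sauvage}: using that $\Sigma$ is positive and that $\Sigma_t\to\Sigma$ a.s., one gets
\[
\|A_t\|_F^4 \le \frac{8}{\lambda_{\min}(\Sigma)^2}\|A_t\Sigma_tA_t-I_d\|_F^2 + \frac{8d}{\lambda_{\min}(\Sigma)^2} + \|A_t\|^4\,\mathbf{1}_{\lambda_{\min}(\Sigma_t)<\lambda_{\min}(\Sigma)/2},
\]
which turns the fourth power of $A_t$ into the desired $\|A_t\Sigma_tA_t-I_d\|_F^2$ plus an a.s.\ eventually bounded remainder. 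Without this (or an equivalent substitute), your appeal to $\mathbb{E}[\widehat{\Sigma}_{t+1}\,|\,\mathcal{F}_t]=\Sigma_t$ does not extract $D_t$; what appears after conditioning is $\|A_t\|_F^4$ times a conditional moment, not $\|D_t\|_F^2$. A second gap: the target has $\|D_t\|_F^2=\|A_t\Sigma_{t-1}A_t-I_d\|_F^2$, whereas your bounds naturally produce $\|A_t\Sigma_tA_t-I_d\|_F^2$; passing between the two requires the Lipschitz property of $\Sigma$ together with a preliminary rate on $\theta_{t,\tau}$ (Lemma \ref{lem4}) --- this is precisely the role of Lemma \ref{lemlink}, which your plan never mentions. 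Finally, a minor misattribution: the stringent $p>\frac{8-8\gamma}{\gamma+\beta-1}$ is not what drives this lemma ($p=2$ suffices for its proof); that exponent enters later, to control $\mathbb{P}[\|Q_t\|_F>\beta_{t+1}\,|\,\mathcal{F}_t]$ by Markov's inequality in Lemma \ref{lemk2}.
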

The proof is given in Section \ref{sec::proof}.
Furthermore, one has to give the link between $ \norm{A_{t}\Sigma_{t}A_{t} - I_{d}}_F^2$ and $\norm{D_{t}}_F^2$. This is given by the following lemma.

\begin{lem}\label{lemlink}
Under the same assumptions as in Theorem \ref{theo::rate} there are random positive sequences $R_{3,t}$ and $R_{4,t}$
satisfying
\[
\sum_{t \geq 1 } R_{3,t} < + \infty \quad a.s. \quad \quad \text{and} \quad \quad 
\sum_{t \geq 1 } R_{4,t} < + \infty \quad a.s.
\]
such that

\begin{align*}
\left\| A_{t}\Sigma_{t}A_{t} - I_{d} \right\|_{F}^{2} \leq \left( 1+ R_{3,t} \right) \left\| D_{t} \right\|_{F}^{2} + R_{4,t}.
\end{align*}

\end{lem}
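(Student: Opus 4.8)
The plan is to isolate the only discrepancy between $A_{t}\Sigma_{t}A_{t}-I_{d}$ and $D_{t}=A_{t}\Sigma_{t-1}A_{t}-I_{d}$, which is the variation of $\Sigma$ between $\theta_{t-1,\tau}$ and $\theta_{t,\tau}$. Writing $E_{t}:=A_{t}\pa{\Sigma_{t}-\Sigma_{t-1}}A_{t}$ we have the exact identity $A_{t}\Sigma_{t}A_{t}-I_{d}=D_{t}+E_{t}$, so the whole lemma reduces to showing that $E_{t}$ is a summable perturbation, namely $\sum_{t}\norm{E_{t}}_{F}<\infty$ a.s.

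To bound $\norm{E_{t}}_{F}$ I would avoid the crude estimate $\lambda_{\max}(A_{t})=O(t^{1-\gamma})$ coming from \eqref{maj::an} (it makes the remaining series converge only under an artificially strong lower bound on $\gamma$) and instead factor $A_{t}=\pa{A_{t}\Sigma_{t-1}^{1/2}}\Sigma_{t-1}^{-1/2}$ on both sides:
\[
\norm{E_{t}}_{F}\le \norm{A_{t}\Sigma_{t-1}^{1/2}}_{op}^{2}\,\norm{\Sigma_{t-1}^{-1/2}\pa{\Sigma_{t}-\Sigma_{t-1}}\Sigma_{t-1}^{-1/2}}_{F}.
\]
The first factor equals $\lambda_{\max}\pa{A_{t}\Sigma_{t-1}A_{t}}=1+\lambda_{\max}(D_{t})\le 1+\norm{D_{t}}_{F}$; the second is at most $\lambda_{\min}(\Sigma_{t-1})^{-1}L_{\Sigma}\norm{\theta_{t,\tau}-\theta_{t-1,\tau}}$ by Assumption \ref{ass::sigma::lip} (up to a dimensional factor), and $\lambda_{\min}(\Sigma_{t-1})^{-1}$ is bounded a.s. because $\theta_{t-1,\tau}\to\theta^{*}$ (Theorem \ref{theo::consistency}), $\Sigma$ is continuous, and $\Sigma(\theta^{*})$ is positive. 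Hence $\norm{E_{t}}_{F}\le\pa{1+\norm{D_{t}}_{F}}\xi_{t}$ with $\xi_{t}:=C_{\omega}\norm{\theta_{t,\tau}-\theta_{t-1,\tau}}$ for an a.s. finite random constant $C_{\omega}$. Expanding $\norm{D_{t}+E_{t}}_{F}^{2}\le\pa{\norm{D_{t}}_{F}(1+\xi_{t})+\xi_{t}}^{2}$ and using $2ab\le a(b^{2}+1)$ on the cross term yields
\[
\norm{A_{t}\Sigma_{t}A_{t}-I_{d}}_{F}^{2}\le\pa{1+3\xi_{t}+2\xi_{t}^{2}}\norm{D_{t}}_{F}^{2}+\xi_{t}+2\xi_{t}^{2},
\]
so one may take $R_{3,t}=3\xi_{t}+2\xi_{t}^{2}$ and $R_{4,t}=\xi_{t}+2\xi_{t}^{2}$; since $\xi_{t}\to0$, both series are a.s. finite as soon as $\sum_{t}\norm{\theta_{t,\tau}-\theta_{t-1,\tau}}<\infty$ a.s.

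It remains to prove $\sum_{t}\norm{\theta_{t,\tau}-\theta_{t-1,\tau}}<\infty$ a.s. By \eqref{thetaTAU}, $\theta_{t,\tau}-\theta_{t-1,\tau}=\alpha_{t}\pa{\theta_{t}-\theta_{t-1,\tau}}$ with $\alpha_{t}=(\ln t)^{\tau}\big/\sum_{k=0}^{t-1}(\ln (k+1))^{\tau}=O(1/t)$ (integral comparison, $\tau\ge0$), so by the triangle inequality it suffices to check $\sum_{t}t^{-1}\norm{\theta_{t}-\theta^{*}}<\infty$ and $\sum_{t}t^{-1}\norm{\theta_{t-1,\tau}-\theta^{*}}<\infty$ a.s. For the first, Cauchy--Schwarz gives
\[
\sum_{t}t^{-1}\norm{\theta_{t}-\theta^{*}}\le\pa{\sum_{t}t^{\nu-2}}^{1/2}\pa{\sum_{t}t^{-\nu}\norm{\theta_{t}-\theta^{*}}^{2}}^{1/2},
\]
where $\sum_{t}t^{\nu-2}<\infty$ since $\nu<\gamma<1$, and $\sum_{t}t^{-\nu}\norm{\theta_{t}-\theta^{*}}^{2}<\infty$ a.s. is the extra summability byproduct of the Robbins--Siegmund argument behind Theorem \ref{theo::consistency} (namely $\sum_{t}\nu_{t}\pa{F(\theta_{t-1})-F(\theta^{*})}<\infty$ a.s., converted into a bound on $\norm{\theta_{t-1}-\theta^{*}}^{2}$ via the local strong convexity of Assumption \ref{ass1}); if one only tracks $\lambda_{\min}(\hat A_{t})\gtrsim\beta_{t}^{-1}$ one obtains instead the cruder exponent $\nu+\beta$, which by $\nu+\beta<1$ still suffices. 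The same estimate transfers to $\theta_{t,\tau}$ by Jensen's inequality together with a Fubini/summation-by-parts step, which closes the argument.

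The main obstacle is exactly this last reduction: the mere a.s. convergence given by Theorem \ref{theo::consistency} is too weak (for a generic null sequence $\sum_{t}t^{-1}\norm{\theta_{t}-\theta^{*}}$ may diverge), so one genuinely has to carry along the quantitative, weighted-square-summable control of the iterates — and it is in order to keep that requirement as mild as $\nu+\beta<1$, rather than some strong constraint on $\gamma$, that the refined bound on $\norm{E_{t}}_{F}$ through $\norm{D_{t}}_{F}$ (instead of the crude $\lambda_{\max}(A_{t})\lesssim t^{1-\gamma}$) is needed. Two minor technicalities also arise: the $\ln 1=0$ degeneracy in the averaging weights (the sum effectively starts at $k=1$), and the transfer of the summability estimate from $\theta_{t}$ to the weighted average $\theta_{t,\tau}$.
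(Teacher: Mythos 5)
Your decomposition $A_t\Sigma_tA_t-I_d=D_t+E_t$ with $E_t=A_t(\Sigma_t-\Sigma_{t-1})A_t$ is exactly the paper's, and your way of controlling $E_t$ is genuinely sharper than theirs. By writing $\|E_t\|_F\le\|A_t\Sigma_{t-1}^{1/2}\|_{op}^2\|\Sigma_{t-1}^{-1/2}(\Sigma_t-\Sigma_{t-1})\Sigma_{t-1}^{-1/2}\|_F$ and recognizing $\|A_t\Sigma_{t-1}^{1/2}\|_{op}^2=\lambda_{\max}(A_t\Sigma_{t-1}A_t)=1+\lambda_{\max}(D_t)\le 1+\|D_t\|_F$, you get $\|E_t\|_F\le(1+\|D_t\|_F)\xi_t$ directly, whereas the paper first bounds $\|E_t\|_F$ by $\|A_t\|_F^4L_\Sigma\|\theta_{t,\tau}-\theta_{t-1,\tau}\|^2$ and then reconverts $\|A_t\|_F^4$ into $\|D_t\|_F^2$ by the auxiliary inequality \eqref{maj::an::sauvage}, dragging along the spurious term $\|A_t\|^4\mathbf{1}_{\lambda_{\min}(\Sigma_{t-1})<\lambda_{\min}(\Sigma)/2}$. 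Your route avoids that detour. (One small caveat you gloss over: $\lambda_{\min}(\Sigma_{t-1})$ need not be bounded below for all $t$ --- $\Sigma(\theta)$ is only assumed positive at $\theta^*$ --- so the ``a.s.\ finite random constant $C_\omega$'' really needs the same eventual-indicator patch the paper uses; it is harmless but must be said.)

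The genuine gap is in the summability of $\xi_t$. You reduce, correctly, to showing $\sum_t t^{-1}\|\theta_t-\theta^*\|<\infty$ and the analogous sum for $\theta_{t,\tau}$, and then invoke Cauchy--Schwarz against $\sum_t t^{-\nu}\|\theta_t-\theta^*\|^2<\infty$ (or the cruder $\sum_t t^{-\nu-\beta}\|\theta_t-\theta^*\|^2<\infty$), asserting this is an ``extra summability byproduct of the Robbins--Siegmund argument behind Theorem~\ref{theo::consistency}.'' But there is no such byproduct available from the text: the proof of Theorem~\ref{theo::consistency} is a black-box appeal to Theorem~1 of \cite{GBW2023OND}, and it never records the ``positive summand'' $\sum_t\nu_t\lambda_{\min}(\hat A_{t-1})(F(\theta_{t-1})-F(\theta^*))<\infty$ that you are implicitly relying on. The paper instead proves a separate quantitative statement, Lemma~\ref{lem4}, giving $\|\theta_t-\theta^*\|^2=O(t^{-\mu})$ and $\|\theta_{t,\tau}-\theta^*\|^2=O(t^{-\mu})$ a.s.\ for any $\mu<2\gamma+2\nu-3$, and uses it precisely to make the cross-term small and to sum $R_{3,t},R_{4,t}$ (via the choice $c=t^{-(1+\mu/2)}$ in $(a+b)^2\le(1+c)a^2+(1+c^{-1})b^2$). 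You cannot complete your argument without either proving the weighted square-summability you claim, or --- much simpler --- invoking Lemma~\ref{lem4}, from which $\|\theta_t-\theta^*\|=O(t^{-\mu/2})$ with $\mu>0$ immediately yields $\sum_t t^{-1}\|\theta_t-\theta^*\|<\infty$, no Cauchy--Schwarz needed. In short: your bound on $E_t$ is a clean improvement, but the summability step is unproved as written and must be rerouted through Lemma~\ref{lem4} (or an explicit Robbins--Siegmund argument replicating it).
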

The proof is given in Section \ref{sec::lemlink}.
Finally, let us   bound 
\[
(*):= -2 \gamma_{t+1} \left\langle \pa{A_tW_t-I_d}\Sigma_{t} A_t  \mathbf{1}_{\acco{\norm{Q_t}_F\le\beta_{t+1}}}, A_{t}\Sigma_{t}A_{t} - I_{d} \right\rangle_{F}. 
\]
First, note that
\begin{align*}
(*)  = -\underbrace{2 \gamma_{t+1} \left\langle \pa{A_tW_t-I_d}\Sigma_{t} A_t  , A_{t}\Sigma_{t}A_{t} - I_{d} \right\rangle_{F}}_{=: K_{1,t}} \\ \quad+ \underbrace{ 2 \gamma_{t+1} \left\langle \pa{A_tW_t-I_d}\Sigma_{t} A_t , A_{t}\Sigma_{t}A_{t} - I_{d} \right\rangle_{F} \mathbf{1}_{\acco{\norm{Q_t}_F > \beta_{t+1}}}}_{=: K_{2,t}}.
\end{align*}
We now bound each term on the right-hand side of previous equality.
The following Lemma gives the rate of convergence of $K_{2,t}$.
\begin{lem}\label{lemk2}
Under the same assumptions as in Theorem \ref{theo::rate},
\[
\sum_{t\geq 0} \mathbb{E} \left[  K_{2,t} |\mathcal{F}_{t} \right] <  + \infty \quad a.s.
\]
\end{lem}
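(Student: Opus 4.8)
The plan is to exploit the fact that the indicator $\mathbf{1}_{\{\|Q_t\|_F > \beta_{t+1}\}}$ forces the event $K_{2,t} \neq 0$ to be rare, and on that event the quantity $\|Q_t\|_F = g_{t+1}(\theta_{t,\tau})^T A_t g_{t+1}(\theta_{t,\tau})$ is large, which by Markov's inequality has small probability once we have enough moments on the gradient. First I would bound $K_{2,t}$ deterministically: since $\norm{A_tW_t - I_d}_F \le \norm{A_t g_{t+1}(\theta_{t,\tau})}\,\norm{g_{t+1}(\theta_{t,\tau})}\,\norm{A_t}_{op}^{1/2}\cdot(\text{stuff}) + 1$ and $\norm{\Sigma_t A_t}_F$, $\norm{A_t \Sigma_t A_t - I_d}_F$ are all controlled by polynomial factors in $t$ (using $\lambda_{\max}(A_t) = O(t^{1-\gamma})$ from \eqref{maj::an} and the Lipschitz bound on $\Sigma$ together with consistency of $\theta_{t,\tau}$, so $\norm{\Sigma_t}$ is bounded), one gets a bound of the form $|K_{2,t}| \le C \gamma_{t+1}\, t^{a}\, \norm{g_{t+1}(\theta_{t,\tau})}^{4}\, \mathbf{1}_{\{\|Q_t\|_F > \beta_{t+1}\}}$ for some explicit exponent $a$ (depending on $\gamma$, since powers of $\lambda_{\max}(A_t)$ appear). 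The key point is that $\|Q_t\|_F > \beta_{t+1}$ together with $\lambda_{\min}(A_t) \ge \lambda_0/\beta_{t+1}$ — wait, more simply, $\|Q_t\|_F = g_{t+1}^T A_t g_{t+1} \le \lambda_{\max}(A_t)\,\norm{g_{t+1}}^2 = O(t^{1-\gamma})\norm{g_{t+1}}^2$, so the event forces $\norm{g_{t+1}(\theta_{t,\tau})}^2 \gtrsim \beta_{t+1}/\lambda_{\max}(A_t) \gtrsim c_\beta c_\gamma^{-1} t^{\beta + \gamma - 1}$.

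Next I would take conditional expectations. Using $\mathbf{1}_{\{\|Q_t\|_F > \beta_{t+1}\}} \le \mathbf{1}_{\{\norm{g_{t+1}(\theta_{t,\tau})}^2 \ge c\, t^{\beta+\gamma-1}\}}$ and then Cauchy–Schwarz plus Markov at order $p$,
\[
\mathbb{E}\left[ \norm{g_{t+1}(\theta_{t,\tau})}^4 \mathbf{1}_{\{\|Q_t\|_F > \beta_{t+1}\}} \,\middle|\, \mathcal{F}_t \right] \le \mathbb{E}\left[ \norm{g_{t+1}(\theta_{t,\tau})}^{2p} \middle| \mathcal{F}_t\right]^{2/p} \mathbb{P}\left( \norm{g_{t+1}(\theta_{t,\tau})}^2 \ge c\,t^{\beta+\gamma-1} \middle| \mathcal{F}_t \right)^{1 - 2/p},
\]
and Assumption \ref{ass::moment} (with $\theta_{t,\tau} \to \theta^*$ a.s., so $F(\theta_{t,\tau}) - F(\theta^*)$ is a.s. bounded and the conditional moment $\mathbb{E}[\norm{g_{t+1}(\theta_{t,\tau})}^{2p}|\mathcal{F}_t]$ is a.s. bounded) gives that the whole conditional expectation is $O\big( t^{-(p/2 - 1)(\beta+\gamma-1)} \big)$ a.s. Combining with the $\gamma_{t+1}t^{a} \sim t^{a-\gamma}$ prefactor, $\mathbb{E}[K_{2,t}|\mathcal{F}_t] = O\big( t^{a - \gamma - (p/2-1)(\beta+\gamma-1)} \big)$ a.s., and the series converges once $p$ is large enough — precisely this is the origin of the hypothesis $p > \frac{8-8\gamma}{\gamma+\beta-1}$, which is exactly what makes $(p/2-1)(\gamma+\beta-1)$ exceed $a - \gamma + 1$ for the relevant value of $a$ (one should get $a = 4(1-\gamma)$ or so from $\lambda_{\max}(A_t)^2$ appearing in the deterministic bound, consistent with the $8-8\gamma$ in the numerator).

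The main obstacle I anticipate is getting the deterministic polynomial exponent $a$ right and making sure it matches the stated moment condition: one has to carefully track how many factors of $\lambda_{\max}(A_t) = O(t^{1-\gamma})$ enter through $A_t W_t$, $\Sigma_t A_t$, and $A_t \Sigma_t A_t - I_d$, and also whether $\norm{\Sigma_t}$ is genuinely bounded (it is, since $\Sigma$ is Lipschitz and $\theta_{t,\tau}$ converges a.s., but this requires invoking Theorem \ref{theo::consistency}, hence the appeal to consistency is essential). A secondary subtlety is that the bound on $\mathbb{E}[\norm{g_{t+1}(\theta_{t,\tau})}^{2p}|\mathcal{F}_t]$ via Assumption \ref{ass::moment} involves $(F(\theta_{t,\tau})-F(\theta^*))^p$, which is $\mathcal{F}_t$-measurable and a.s. bounded but not deterministically bounded; this is harmless for a.s. summability but means all the resulting bounds must carry "a.s." and possibly a random constant, so the statement is an almost-sure one, as written. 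Once the exponent bookkeeping is done, the conclusion $\sum_t \mathbb{E}[K_{2,t}|\mathcal{F}_t] < \infty$ a.s. follows from comparison with a convergent $p$-series.
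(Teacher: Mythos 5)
Your proposal follows essentially the same route as the paper: split off the $\mathcal{F}_t$-measurable factors by Cauchy--Schwarz, recognize that $\{\|Q_t\|_F>\beta_{t+1}\}$ is a rare event (equivalently $\|g_{t+1}(\theta_{t,\tau})\|^2\gtrsim t^{\gamma+\beta-1}$), control its conditional probability by Markov at order $p$ using Assumption \ref{ass::moment}, and invoke the strong consistency of $\theta_{t,\tau}$ so that $\|\Sigma_t\|$ and the conditional moments of $\|g_{t+1}(\theta_{t,\tau})\|$ are a.s.\ bounded — exactly the mechanism the paper uses, and exactly where the condition $\gamma+\beta>1$ together with $p>\frac{8-8\gamma}{\gamma+\beta-1}$ enters.

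Two bookkeeping slips are worth correcting for a clean write-up, though neither affects the validity of the qualitative conclusion. First, because $W_t = g_{t+1}(\theta_{t,\tau})g_{t+1}(\theta_{t,\tau})^TA_t$ is rank one, the random factor inside the conditional expectation is only quadratic in $\|g_{t+1}\|$, not quartic: $\|A_tW_t-I_d\|_F\le\|A_t\|_F^2\|g_{t+1}\|^2+\sqrt{d}$, and the remaining factors $\|\Sigma_t A_t\|_F$, $\|A_t\Sigma_tA_t-I_d\|_F$ are $\mathcal{F}_t$-measurable, so the paper needs only a Cauchy--Schwarz split $\mathbb{E}[\|g\|^2\mathbf{1}|\mathcal{F}_t]\le(\mathbb{E}[\|g\|^4|\mathcal{F}_t])^{1/2}\mathbb{P}(\cdot|\mathcal{F}_t)^{1/2}$ rather than the general Hölder with exponent $p/2$. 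Second, in your Hölder step the power of the probability should be $1-2/p$, giving decay $t^{-(p-2)(\gamma+\beta-1)}$, not $t^{-(p/2-1)(\gamma+\beta-1)}$; with these two corrections the exponent arithmetic lines up with the paper's $\mathcal{O}(t^{3-4\gamma+p(1-\gamma-\beta)/2})$ and the stated moment condition, whereas the numbers as you wrote them do not quite match the hypothesis of Theorem \ref{theo::rate}.
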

The proof is given in Section \ref{sec::lemk2}.
Let us now focus on the {positivity of $\mathbb{E}\left[  K_{1,t} |\mathcal{F}_{t} \right]$.}
Let us denote $\widetilde{D}_t :=A_t\Sigma_tA_t-I_d$ and remark  that $\mathbb{E} \left[  K_{1,t} |\mathcal{F}_{t} \right]= 2\gamma_{t+1}\langle A_t\Sigma_t\widetilde{D}_t\; , \; \widetilde{D}_t \rangle_F.$ One has, since $\tilde{D}_{t}$ and $A_{t}\Sigma_{t}A_{t}$ commute and since $\tilde{D}_{t}$ is symmetric,
\begin{align*}
	\langle A_t\Sigma_t \widetilde{D}_t \; , \; \widetilde{D}_t \rangle_F = \text{tr}\pa{A_t\Sigma_t \widetilde{D}_t^2} =\text{tr}\pa{A_t\Sigma_t A_t A_t^{-1}\widetilde{D}_t^2}
	= \text{tr}\pa{A_t^{-1}A_t\Sigma_t A_t\widetilde{D}_t^2}.
\end{align*}
In a same way,
\begin{align*}
	\langle A_t\Sigma_t \widetilde{D}_t \; , \; \widetilde{D}_t \rangle_F &= \text{tr}\pa{A_t^{-1}(A_t\Sigma_t A_t\widetilde{D}_t)\widetilde{D}_t}   \\
	& =\text{tr}\pa{A_t^{-1} \widetilde{D}_t A_t\Sigma_t A_t \widetilde{D}_t} \\
	& =\text{tr}\pa{(A_t\Sigma_t A_t)(\widetilde{D}_tA_t^{-1}\widetilde{D}_t)}.
\end{align*}
Both $A_t\Sigma_t A_t$ and $\widetilde{D}_tA_t^{-1}\widetilde{D}_t$ are positive symmetric matrix, so that 
$$\langle A_t\Sigma_t \widetilde{D}_t \; , \; \widetilde{D}_t \rangle_F = \text{tr}\pa{(A_t\Sigma_t A_t)^{1/2}(\widetilde{D}_tA_t^{-1}\widetilde{D}_t)(A_t\Sigma_t A_t)^{1/2}} \ge 0.$$ 
Therefore, $K_{1,t}\ge0$ for all $t\ge0$.

\paragraph*{Upper bound of $\mathbb{E}\left[ \left\| D_{t+1} \right\|_{F}^{2} |\mathcal{F}_{t} \right]$ and first conclusions. } Resuming all previous bounds, one has
\begin{equation}\label{updt}
\mathbb{E}\left[ \left\| D_{t+1} \right\|_{F}^{2} |\mathcal{F}_{t} \right] \leq  \left( 1+ S_{1,t} \right) \left\| D_{t} \right\|_{F}^{2} + S_{2,t} - \mathbb{E}\left[  K_{1,t} |\mathcal{F}_{t} \right]
\end{equation}
with $K_{1,t}$ positive and
\[
\sum_{t \geq 1} S_{1,t} < + \infty \quad a.s \quad \quad \text{ and} \quad \quad \sum_{t \geq 1} S_{2,t} < + \infty \quad a.s.
\]
Then, applying Robbins-Siemund Theorem, $\left\| D_{t} \right\|_{F}^{2}:= \left\| A_{t}\Sigma_{t-1}A_{t} - I_{d} \right\|_{F}^{2}$ converges almost surely to a finite random variable. Observe that since $\Sigma_{t}$ converges almost surely to $\Sigma$ which is positive, this leads to 
\begin{equation}
\label{maj::At::O1} \lambda_{\max} \left( A_{t} \right) = O (1) \quad a.s.
\end{equation}
In addition, Robbins-Siegmund Theorem ensures that 
\[
\sum_{t \geq 1} \mathbb{E}\left[  K_{1,t} |\mathcal{F}_{t} \right] < + \infty \quad a.s.
\]
Remark that since for all symmetric positive matrices $A,B$, $\text{tr}(ABA) \geq \lambda_{\min}(A)^{2} Tr (B) \geq \lambda_{\min}(A)^{2}\| B\|_{F}^{2}$, and using this inequality three times,
\begin{align*}
\mathbb{E}\left[  K_{1,t} |\mathcal{F}_{t} \right]&= 2\gamma_{t+1} \text{tr}\pa{(A_t\Sigma_t A_t)^{1/2}(\widetilde{D}_tA_t^{-1}\widetilde{D}_t)(A_t\Sigma_t A_t)^{1/2}} \\ &  \geq 2 \gamma_{t+1}  \lambda_{\min} \left(  \left( A_{t}\Sigma_{t}A_{t} \right)^{1/2} \right)^{2} \text{tr} \left(  \widetilde{D}_tA_t^{-1}\widetilde{D}_t  \right) \\
&   \geq 2 \gamma_{t+1}  \lambda_{\min} \left(    A_{t} \right)^{2} \| \Sigma_{t}\|_{F}    \lambda_{\min} \left(  A_t^{-1} \right)\left\| \widetilde{D}_t^{2} \right\|_{F}  \\
\end{align*}
Then, 
\begin{align*}
\mathbb{E}\left[  K_{1,t} |\mathcal{F}_{t} \right]&    \geq 2 \gamma_{t+1} \frac{\lambda_{\min}\left( A_{t} \right)^{2}}{\lambda_{\max}\left( A_{t} \right)}   \left\| \Sigma_{t} \right\|_{F}\left\| A_{t} \Sigma_{t} A_{t} - I_{d} \right\|_{F}^{2} .
\end{align*}
Then, in order to conclude, one has to obtain a better lower bound of the smallest eigenvalue of $A_{t}$. This is given by the following lemma.

\begin{lem}\label{lem::eig0}
Under the same assumptions as in Theorem \ref{theo::rate}
$$\frac{1}{\lambda_{\min}(A_t)} = \mathcal{O}(\beta'_{t})= \mathcal{O}(t^{\frac{1-\gamma}{4}})\quad a.s.$$ 
\end{lem}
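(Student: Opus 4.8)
The plan is to bootstrap from the crude bound $\lambda_{\min}(A_t) \geq \lambda_0/\beta_{t+1}$ (equivalently $\lambda_{\max}(A_t^{-1}) = O(\beta_t)$) established in the proof of Theorem~\ref{theo::consistency}, using now the \emph{additional} information that $\|D_t\|_F^2 = \|A_t\Sigma_{t-1}A_t - I_d\|_F^2$ converges almost surely to a finite random variable and that $\lambda_{\max}(A_t) = O(1)$ a.s.\ (equations~\eqref{maj::At::O1}). Since $\Sigma_{t-1} \to \Sigma$ a.s.\ and $\Sigma$ is positive definite, the convergence of $\|A_t\Sigma_{t-1}A_t - I_d\|_F$ means that $A_t\Sigma_{t-1}A_t$ stays in a bounded region; writing $\lambda_{\min}(A_t)^2 \lambda_{\min}(\Sigma_{t-1}) \leq \lambda_{\min}(A_t\Sigma_{t-1}A_t)$ does not immediately give a positive lower bound (the limiting random variable could in principle be as large as to allow $\lambda_{\min}(A_t\Sigma_{t-1}A_t) \to 0$), so a direct argument from boundedness alone is not enough. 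Instead I would revisit the recursion for $\lambda_{\min}(A_t)$ and exploit the truncation level $\beta_{t+1}$ more carefully together with the now-available a priori bound $\lambda_{\max}(A_t) = O(1)$.

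Concretely, from the decomposition~\eqref{decompositionAt} and the inequality $\lambda_{\min}(A B A) \geq \lambda_{\min}(A^2)\lambda_{\min}(B)$ used in the consistency proof, one gets on the event $\{\|Q_t\|_F \leq \beta_{t+1}\}$:
\[
\lambda_{\min}(A_{t+1}) \geq \lambda_{\min}(A_t)\left(1 - \gamma_{t+1}\|Q_t\|_F\right) + \gamma_{t+1},
\]
but now $\|Q_t\|_F = g_{t+1}(\theta_{t,\tau})^T A_t g_{t+1}(\theta_{t,\tau}) \leq \lambda_{\max}(A_t)\|g_{t+1}(\theta_{t,\tau})\|^2 = O(1)\cdot\|g_{t+1}(\theta_{t,\tau})\|^2$, and the moment assumption~\ref{ass::moment} together with the consistency $\theta_{t,\tau}\to\theta^*$ controls $\|g_{t+1}(\theta_{t,\tau})\|^2$. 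The key quantitative point is that, after some (random) time, $\|Q_t\|_F$ is typically $O(1)$ rather than of order $\beta_{t+1} = c_\beta t^\beta$, so the multiplicative factor $1-\gamma_{t+1}\|Q_t\|_F$ is $1 - O(\gamma_{t+1})$ on the ``good'' steps; on the ``bad'' steps where $\|g_{t+1}(\theta_{t,\tau})\|^2$ is large the indicator may vanish, in which case $\lambda_{\min}$ does not decrease at all. I would therefore split the analysis according to whether the update indicator is active and the gradient norm is moderate or large, handle the large-gradient steps with a Borel–Cantelli / summability argument off the $2p$-th moment bound (this is where $p > 2(1/\gamma - 1)$ is used, to make $\sum_t \gamma_t^{?}\beta_t^{?}$-type series converge), and on the moderate steps iterate $\lambda_{\min}(A_{t+1}) \geq \lambda_{\min}(A_t)(1 - C\gamma_{t+1}) + \gamma_{t+1}$ to show $\lambda_{\min}(A_t)$ cannot go below a threshold of order $t^{(1-\gamma)/4}$ — i.e.\ $\beta'_t := t^{(1-\gamma)/4}$ arises as the balance point between the decay $\prod(1-C\gamma_k)$ and the additive injections $\gamma_k$, refined by the requirement that the discarded ``bad'' contributions, summed, are negligible relative to $1/\beta'_t$.

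The main obstacle I expect is making the ``bad-step'' contribution genuinely negligible: one must show that the multiplicative damage $\prod_{k \in \mathrm{bad}} (1 - \gamma_{k+1}\|Q_k\|_F)$ accumulated on steps where the indicator fires but $\|Q_k\|_F$ is close to $\beta_{k+1}$ is not enough to push $\lambda_{\min}(A_t)$ below $c\,\beta'_t{}^{-1}$. This requires a delicate interplay of the constraints $2\gamma - 2\beta > 1$, $\gamma + \beta > 1$, and the moment order $p > \frac{8-8\gamma}{\gamma+\beta-1}$: the first controls $\sum_t \gamma_t\beta_t < \infty$ would be too strong, so rather one needs $\sum_t \gamma_t^2\beta_t^2 < \infty$ or a weighted version, while the moment bound ensures that $\|Q_k\|_F \leq \beta_{k+1}$ \emph{and} $\|Q_k\|_F$ large simultaneously is a summably-rare event. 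I would organize this as: (i) a.s.\ eventual bound $\lambda_{\max}(A_t) \leq M$; (ii) for $t \geq T(\omega)$, classify steps; (iii) a discrete Grönwall-type lower recursion on the good steps giving $\lambda_{\min}(A_t) \geq c\, t^{-(1-\gamma)/4}$ for the product-of-good-factors part; (iv) Borel–Cantelli to bound the number and cumulative effect of bad steps by the moment assumption; (v) combine to conclude $1/\lambda_{\min}(A_t) = O(t^{(1-\gamma)/4}) = O(\beta'_t)$ a.s. The precise exponent $(1-\gamma)/4$ (rather than, say, $1-\gamma$ as in the crude bound) should emerge from optimizing the trade-off in step (iii)–(iv), and pinning down that this is the right power is the technically fiddly part.
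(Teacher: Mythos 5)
Your qualitative picture --- split iterations into ``moderate'' and ``large'' $\|Q_t\|_F$ events, iterate a lower recursion on moderate steps, and control the damage on large steps via the moment bound --- is the right shape, and you correctly foresee that $(1-\gamma)/4$ emerges from a trade-off. But two of your specific devices would fail, and the paper replaces them with different mechanisms.

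First, a \emph{constant} cutoff $M$ for ``moderate'' $\|Q_t\|_F$ does not work: the conditional probability $\mathbb{P}\left[\|Q_t\|_F > M \mid \mathcal{F}_t\right]$ is $\Theta(1)$, not summable, so Borel--Cantelli is inapplicable to the bad steps; moreover, iterating $\lambda_{\min}(A_{t+1}) \ge \lambda_{\min}(A_t)(1-C\gamma_{t+1}) + \gamma_{t+1}$ with fixed $C$ would stabilize $\lambda_{\min}(A_t)$ near the constant $1/C$, which is strictly stronger than what is claimed and cannot be what comes out once the bad-step damage is accounted for. The paper's key device is a \emph{growing auxiliary threshold} $\beta'_t := \beta_1 t^{(1-\gamma)/4}$, which sits strictly below the algorithm's truncation level $\beta_t = c_\beta t^\beta$ (since $(1-\gamma)/4 < \beta$ under the constraints). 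Comparing $\|Q_t\|_F$ to $\beta'_{t+1}$ rather than to a constant makes the bad-event probability decay like $t^{-p(1-\gamma)/4}$ and makes the good-step multiplier $(1-\gamma_{t+1}\beta'_{t+1})$ degrade at exactly the right rate. Second, the quantitative lower bound does not come from a Gr\"onwall estimate but from unrolling the recursion exactly,
\[
\lambda_{\min}(A_t) \ge \prod_{j=1}^t\bigl(1-\gamma_j\beta'_j\bigr)\lambda_{\min}(A_0) + \sum_{k=1}^t\prod_{j=k+1}^t\bigl(1-\gamma_j\beta'_j\bigr)\gamma_k - \mathcal{V}_t,
\]
and then telescoping the injection sum: writing $\gamma_k = \beta'^{-1}_k\cdot\gamma_k\beta'_k$ gives $\sum_{k=1}^t\prod_{j=k+1}^t(1-\gamma_j\beta'_j)\gamma_k \ge \beta'^{-1}_t\bigl(1-\prod_{j=1}^t(1-\gamma_j\beta'_j)\bigr) \ge \gamma_1\beta_1/\beta'_t$. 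This telescoping is precisely where the exponent $(1-\gamma)/4$ is pinned down, and it is absent from your plan. Finally, the penalty sum $\mathcal{V}_t$ is handled not by Borel--Cantelli but by a Doob-type split into a predictable part (controlled by Markov and a deterministic-sequence lemma) and a martingale part (controlled by an a.s.\ rate theorem giving $\mathcal{M}_t^2 = O(\gamma_t/\beta'_t)$), which exploits the decaying weights $\prod_{j>k}(1-\gamma_j\beta'_j)$ and succeeds even when $\sum_t \mathbb{P}\left[\|Q_t\|_F > \beta'_{t+1}\right]$ need not converge under the stated moment hypotheses. (A small typo on your side: you write ``cannot go below a threshold of order $t^{(1-\gamma)/4}$''; you mean $t^{-(1-\gamma)/4}$.)
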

The proof is given in Section \ref{sec::lem::eig0}.
Lemma \ref{lem::eig0} means that $\liminf \lambda_{\min}\left( A_{t} \right) t^{\frac{1-\gamma}{4}} > 0$ a.s so that
\begin{equation*}
	\sum_{t\ge 1}\gamma_{t+1}\lambda^2_{\min}(A_t) =+\infty\quad a.s. 
	\end{equation*}
and since $\lambda_{\max} \left( A_{t} \right) =O (1)$ a.s., it comes
\begin{equation*}
\label{sommeinfinie}	\sum_{t\ge 1}\gamma_{t+1}\frac{\lambda^2_{\min}(A_t)}{\lambda_{\max} \left( A_{t} \right)} =+\infty\quad a.s. 
\end{equation*}

\paragraph*{Conclusion 1} Observe that
\begin{align*}
\mathbb{E}\left[  K_{1,t} |\mathcal{F}_{t} \right]&   \geq 2\gamma_{t+1}\frac{\lambda_{\min}\left( A_{t} \right)^{2}}{\lambda_{\max}\left( A_{t} \right)}   \left\| \Sigma_{t} \right\|_{F}\left\| A_{t} \Sigma_{t} A_{t} - I_{d} \right\|_{F}^{2}  \geq \underbrace{ \gamma_{t+1}\frac{\lambda_{\min}\left( A_{t} \right)^{2}}{\lambda_{\max}\left( A_{t} \right)} \left\| \Sigma_{t} \right\|_{F} \left\| D_{t} \right\|_{F}^{2}}_{\tilde{K}_{1,t}} 
\\ &\quad-\underbrace{4 \gamma_{t+1} \frac{\lambda_{\min}\left( A_{t} \right)^{2}}{\lambda_{\max}\left( A_{t} \right)} \left\| \Sigma_{t} \right\|_{F} \left\| A_{t} \left( \Sigma_{t} - \Sigma_{t-1} \right) A_{t} \right\|_{F}^{2}}_{=: R_{6,t}}
\end{align*}
and one can remark that
\[
\frac{\lambda_{\max}\left( A_{t} \right)}{\lambda_{\min}\left( A_{t} \right)^{2}} R_{6,t} \leq 4\gamma_{t+1}  \left\| A_{t} \right\|_{F}^{5} \left\| \Sigma_{t} \right\|_{F} \frac{\ln t^{2\tau}}{\left( \sum_{k=0}^{t} \ln (k+1)^{\tau} \right)^{2}}L_{\Sigma}^{2} \left\| \theta_{t} - \theta_{t-1,\tau} \right\|^{2} = o \left( \frac{1}{t^{2}} \right) \quad a.s.
\]
i.e $\sum_{t \geq 1}R_{6,t} < + \infty$ a.s. and rewriting 
\[
\mathbb{E} \left[ \left\|  D_{t+1} \right\|_{F}^{2} |\mathcal{F}_{t} \right] \leq \left( 1+ S_{1,t} \right) \left\| D_{t} \right\|_{F}^{2} + S_{2,t} + R_{6,t} - \tilde{K}_{1,t}
\]
and applying Robbins-Siegmund Theorem, it comes
\[
\sum_{t \geq 1}\gamma_{t+1}\frac{\lambda_{\min}\left( A_{t} \right)^{2}}{\lambda_{\max}\left( A_{t} \right)} \left\| \Sigma_{t} \right\|_{F} \left\| D_{t} \right\|_{F}^{2} < + \infty \quad a.s.
\]
Then, equality \eqref{maj::At::O1} implies that $\liminf \left\| D_{t} \right\|_{F}^{2} = 0$ a.s, so that, since  $\left\| D_{t} \right\|_{F}^{2}$ converges almost surely to a finite random variable, $\left\| D_{t} \right\|_{F}^{2}$ converges almost surely to $0$, i.e
\[
A_{t}\Sigma_{t-1}A_{t} - I_{d} \xrightarrow[t\to + \infty]{a.s} 0
\]
and since $\Sigma_{t-1}$ converges almost surely to $\Sigma$, 
\[
A_{t} \xrightarrow[t\to + \infty]{a.s} \Sigma^{-1/2}.
\]

\paragraph*{Conclusion 2} Theorem 2 in \cite{GBW2023OND} states that under our assumptions, as soon as $A_{t,\tau'}$ converges almost surely to a positive matrix (which is satisfied), we have the desired rate of convergence, i.e we have
\[
\left\| \theta_{t} - \theta^{*} \right\|^{2} = O \left( \frac{\ln t}{t^{\nu}} \right) \quad a.s.
\]

\subsection{Proof of Theorem \ref{theo::tlc}}
The aim is to apply Theorem 4 in \cite{GBW2023OND}. This theorem states that under our assumptions, if there is $\delta > 0$ such that
\[
\frac{1}{\sum_{k=0}^{t} \ln (k+1)^{\tau}} \sum_{k=0}^{t} \ln (k+1)^{\tau +1/2 + \delta} \left\| A_{k+1,\tau '}^{-1} - A_{k,\tau '}^{-1} \right\|_{op} (k+1)^{\gamma /2} = O \left( \frac{1}{t^{\nu '}} \right) \quad a.s.
\]
for some $\nu ' > 1/2$, then we have the desired result.
Here, observe that 
\begin{align*}
\left\| A_{k+1,\tau '}^{-1} - A_{k,\tau '}^{-1} \right\|_{op} &  \leq \left\| A_{k+1,\tau '}^{-1} \right\| \left\| A_{k,\tau '}^{-1} \right\|_{op} \left\| A_{k+1,\tau '} - A_{k,\tau '} \right\|_{op}  \\
& \leq \left\| A_{k+1,\tau '}^{-1} \right\| \left\| A_{k,\tau '}^{-1} \right\|_{op} \frac{\ln (t+1)^{\tau '}}{\sum_{k=0}^{t} \ln (k+1)^{\tau '}} \left\| A_{k+1} - A_{k,\tau '} \right\| . 
\end{align*}
Since $A_{t}$ and $A_{t,\tau '}$ converge  almost surely to the positive matrix $\Sigma^{-1/2}$, it comes that 
\[
\left\| A_{k+1,\tau '}^{-1} - A_{k,\tau '}^{-1} \right\|_{op}  = o \left( \frac{1}{t} \right) \quad a.s.
\]
which concludes the proof since $\gamma < 1$.

\subsection{Proofs of Lemma \ref{lem0}}\label{sec::proof}
The aim is then to give an upper bound of the four terms composing $R_{t}$. This is given by the following lemma.

\begin{lem}\label{lem1}
Under the same assumtions as in Theorem \ref{theo::rate}, one has
\[
\sum_{t\geq 0} \frac{1}{\gamma_{t+1}^{2}\beta_{t+1}^{2}}\mathbb{E}\left[ \norm{\gamma_{t+1}^2A_tW_t\Sigma_{t} A_tW_t\mathbf{1}_{\acco{\norm{Q_t}_F\le\beta_{t+1}}}}^2_F |\mathcal{F}_{t} \right] < + \infty \quad a.s.
\]
\end{lem}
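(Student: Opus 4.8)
The plan is to control $\mathbb{E}\bigl[\norm{\gamma_{t+1}^2 A_tW_t\Sigma_t A_tW_t\mathbf{1}_{\{\norm{Q_t}_F\le\beta_{t+1}\}}}^2_F \mid \mathcal{F}_t\bigr]$ term by term, extracting all the deterministic decay from the step sizes $\gamma_{t+1}=c_\gamma(t+1)^{-\gamma}$ and the truncation level $\beta_{t+1}=c_\beta(t+1)^{\beta}$, and then invoke the moment assumption \ref{ass::moment} to bound the expectation of the remaining gradient factors. First I would rewrite $A_tW_t = A_t g_{t+1}(\theta_{t,\tau})g_{t+1}(\theta_{t,\tau})^T A_t$, so that $A_tW_t\Sigma_t A_tW_t$ has Frobenius norm at most $\norm{A_t}_{op}^4\,\norm{\Sigma_t}_{op}\,\norm{g_{t+1}(\theta_{t,\tau})}^4\cdot\bigl(g_{t+1}(\theta_{t,\tau})^T A_t g_{t+1}(\theta_{t,\tau})\bigr)$ up to a harmless constant; on the event $\{\norm{Q_t}_F\le\beta_{t+1}\}$ the last scalar factor is $\le\beta_{t+1}$. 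This already pulls out a factor $\gamma_{t+1}^2\beta_{t+1}/(\gamma_{t+1}^2\beta_{t+1}^2)=\beta_{t+1}^{-1}=O(t^{-\beta})$. More carefully, using $\norm{Q_t}_F = g_{t+1}^T A_t g_{t+1} = \norm{A_t^{1/2}g_{t+1}}^2 \le \beta_{t+1}$, one gets $\norm{A_t g_{t+1}}^2 \le \lambda_{\max}(A_t)\,\norm{A_t^{1/2}g_{t+1}}^2 \le \lambda_{\max}(A_t)\beta_{t+1}$, so $\norm{A_tW_t}_F = \norm{A_t^{1/2}g_{t+1}}^2\cdot\norm{A_t^{1/2}}_{op}\cdot(\dots)$ can be bounded by $\lambda_{\max}(A_t)\beta_{t+1}$ as well. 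Thus the full summand is bounded, on the truncation event, by a constant times $\frac{1}{\gamma_{t+1}^2\beta_{t+1}^2}\gamma_{t+1}^2\lambda_{\max}(A_t)^2\beta_{t+1}^2\,\norm{\Sigma_t}_{op}=\lambda_{\max}(A_t)^2\norm{\Sigma_t}_{op}$, which is $O(1)$ a.s.\ but not summable. So a cruder bound does not suffice: one must exploit that the extra gradient norm $\norm{g_{t+1}}^2$ beyond what the truncation controls is small in expectation.

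The correct route, I expect, is the following. Keep one factor $g_{t+1}^T A_t g_{t+1}\le\beta_{t+1}$ from the truncation, and bound the remaining dependence on $g_{t+1}$ by $\norm{A_t g_{t+1}}^2\norm{A_t g_{t+1}}^2\le\lambda_{\max}(A_t)^2\norm{g_{t+1}}^2\cdot g_{t+1}^TA_tg_{t+1}\le\lambda_{\max}(A_t)^2\norm{g_{t+1}}^2\beta_{t+1}$ — wait, that reintroduces $\norm{g_{t+1}}^2$ without decay. The resolution is to take the conditional expectation of $\norm{g_{t+1}(\theta_{t,\tau})}^{2}$ (or a higher power) and use Assumption \ref{ass::moment}: $\mathbb{E}[\norm{g_{t+1}(\theta_{t,\tau})}^{2p}\mid\mathcal{F}_t]\le C_p + C_p(F(\theta_{t,\tau})-F(\theta^*))^p$. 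Since Theorem \ref{theo::consistency} gives $\theta_{t,\tau}\to\theta^*$ a.s., the right-hand side is $O(1)$ a.s.; but $O(1)$ is still not summable. Hence the decay must come entirely from the step sizes, and the bookkeeping must be done so that a genuinely summable power of $t$ survives. Concretely: bound $\norm{A_tW_t\Sigma_tA_tW_t}_F \le \lambda_{\max}(A_t)^4\norm{\Sigma_t}_{op}\norm{g_{t+1}}^4 \cdot \mathbf{1}$, but split $\norm{g_{t+1}}^4 = \norm{g_{t+1}}^2\cdot\norm{g_{t+1}}^2$ and use $\norm{A_t^{1/2}g_{t+1}}^2\le\beta_{t+1}$ so that $\norm{g_{t+1}}^2 \le \lambda_{\min}(A_t)^{-1}\beta_{t+1}$; combined with Lemma \ref{lem::eig0} (which gives $\lambda_{\min}(A_t)^{-1}=O(t^{(1-\gamma)/4})$) and $\lambda_{\max}(A_t)=O(1)$ (eq.\ \eqref{maj::At::O1}) and taking the conditional expectation of the remaining $\norm{g_{t+1}}^2$ — no wait, we already used both copies. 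Let me instead keep three copies bounded by $\beta_{t+1}/\lambda_{\min}(A_t)$ each and one copy in expectation: then the summand is $O\!\bigl(\frac{\gamma_{t+1}^2}{\gamma_{t+1}^2\beta_{t+1}^2}\cdot\beta_{t+1}\cdot\frac{\beta_{t+1}^{3}}{\lambda_{\min}(A_t)^{3}}\bigr)=O\!\bigl(\frac{\beta_{t+1}^{2}}{\lambda_{\min}(A_t)^3}\bigr)$, which blows up. So using the truncation bound on too many gradient copies is wasteful.

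The honest plan is therefore: use the truncation $g_{t+1}^TA_tg_{t+1}\le\beta_{t+1}$ on exactly \emph{one} gradient-quadratic-form factor, bound the operator norms $\lambda_{\max}(A_t)=O(1)$ a.s.\ by eq.\ \eqref{maj::At::O1}, note $\norm{\Sigma_t}_{op}$ is bounded a.s.\ (as $\Sigma_t\to\Sigma$), and take the conditional expectation of the \emph{residual} $\norm{g_{t+1}(\theta_{t,\tau})}^{2}$ using the alternative form of Assumption \ref{ass::moment} — $\mathbb{E}[\norm{g_{t+1}(\theta_{t,\tau})}^2\mid\mathcal{F}_t]\le C_1+C_1(F(\theta_{t,\tau})-F(\theta^*))$, which is $O(1)$ a.s. This yields a summand of size $O\!\bigl(\frac{1}{\gamma_{t+1}^2\beta_{t+1}^2}\cdot\gamma_{t+1}^2\cdot\beta_{t+1}\cdot 1\bigr)=O(\beta_{t+1}^{-1})=O(t^{-\beta})$, still not summable since $\beta<1/2$. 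The genuine saving comes from recognizing that $\norm{A_tW_t\Sigma_tA_tW_t}_F$ involves $\norm{A_t^{1/2}g_{t+1}}^4$ on the truncation event, so it is bounded by $\beta_{t+1}^2$ deterministically times $\lambda_{\max}(A_t)^2\norm{\Sigma_t}_{op}$; hence the summand is $O\!\bigl(\frac{\gamma_{t+1}^2\beta_{t+1}^2}{\gamma_{t+1}^2\beta_{t+1}^2}\bigr)=O(1)$, and to get summability one must not divide by the full $\gamma_{t+1}^2\beta_{t+1}^2$ but observe that an \emph{extra} factor $\gamma_{t+1}^2$ is really present: going back to the definition of $R_t$, this term entered as $\norm{\gamma_{t+1}^2 A_tW_t\Sigma_tA_tW_t\mathbf{1}}_F$, i.e.\ there is a $\gamma_{t+1}^2$, not just $\gamma_{t+1}$. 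Hence squaring gives $\gamma_{t+1}^4$, and dividing by $\gamma_{t+1}^2\beta_{t+1}^2$ leaves $\gamma_{t+1}^2\beta_{t+1}^2 = O(t^{-2\gamma+2\beta})$, which is summable precisely because of the standing hypothesis $2\gamma-2\beta>1$. I would organize the write-up as: (i) the deterministic bound on the truncation event $\norm{A_tW_t\Sigma_tA_tW_t}_F \le \lambda_{\max}(A_t)^2\norm{\Sigma_t}_{op}\beta_{t+1}^2$ using $\norm{A_t^{1/2}g_{t+1}}^2\le\beta_{t+1}$; (ii) the a.s.\ boundedness $\lambda_{\max}(A_t)=O(1)$ from \eqref{maj::At::O1} and $\norm{\Sigma_t}_{op}=O(1)$; (iii) the resulting summand $O(\gamma_{t+1}^2\beta_{t+1}^2) = O(t^{2\beta-2\gamma})$; (iv) conclude by $2\gamma-2\beta>1$. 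The main obstacle is the careful bookkeeping of the powers of $\gamma_{t+1}$ and $\beta_{t+1}$ and making sure one uses $\norm{A_t^{1/2}g_{t+1}}^2 \le \beta_{t+1}$ (the truncation level) rather than a bound on $\norm{g_{t+1}}$ itself, so that \emph{no} uncontrolled gradient moment remains and the summability reduces cleanly to the hypothesis $2\gamma - 2\beta > 1$; the conditional expectation then plays essentially no role beyond confirming measurability, though if one prefers to bound $\norm{g_{t+1}}$ directly one invokes Assumption \ref{ass::moment} and the a.s.\ convergence of $\theta_{t,\tau}$ from Theorem \ref{theo::consistency}.
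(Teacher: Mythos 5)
Your step (i) — the deterministic bound $\|A_tW_t\|_F \le \lambda_{\max}(A_t)\beta_{t+1}$ on the truncation event, via $\|A_tW_t\|_F = \|A_t g_{t+1}(\theta_{t,\tau})\|^2 \le \lambda_{\max}(A_t)\,g_{t+1}^T A_t g_{t+1} = \lambda_{\max}(A_t)\|Q_t\|_F$ — is correct and clean. The problem is step (ii): you invoke $\lambda_{\max}(A_t)=O(1)$ a.s.\ from equation \eqref{maj::At::O1}, but that equation is \emph{derived after} Robbins–Siegmund has been applied to the recursion \eqref{updt}, while Lemma~\ref{lem1} is a building block of Lemma~\ref{lem0}, which is an \emph{input} to that very application of Robbins–Siegmund. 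Using \eqref{maj::At::O1} here is therefore circular.

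The only bound on $\lambda_{\max}(A_t)$ that is legitimately available at this stage is the crude one from the proof of Theorem~\ref{theo::consistency}, namely $\lambda_{\max}(A_t)\le\|A_t\|_F = O\left(t^{(1-\gamma)/2}\right)$ a.s.\ (equation \eqref{maj::an}). Plugging that into your estimate gives a summand of order $\gamma_{t+1}^2\beta_{t+1}^2\lambda_{\max}(A_t)^4 = O\left(t^{4+2\beta-6\gamma}\right)$, which is summable only if $6\gamma-2\beta>5$. Under the standing hypotheses of Theorem~\ref{theo::rate} ($\gamma>3/4$, $\gamma+\beta>1$, $2\gamma-2\beta>1$) this extra constraint is not implied — for instance $\gamma$ close to $3/4$ and $\beta$ close to $1/4$ satisfies all three but violates $6\gamma-2\beta>5$. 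So the fully deterministic route is too lossy: using the truncation $\|Q_t\|_F\le\beta_{t+1}$ on all four gradient factors wastes too many powers of $\beta_{t+1}$ and picks up too many powers of $\lambda_{\max}(A_t)$.

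The paper takes a different, sharper route, which is worth contrasting with yours. It uses the truncation to bound only \emph{part} of the $g$-dependence — essentially $\|A_t^{1/2}Q_tA_t^{1/2}\|_F^2\mathbf{1}_{\{\|Q_t\|_F\le\beta_{t+1}\}}\le\beta_{t+1}^2\|A_t\|_F^2$ — while the remaining factor $\|g_{t+1}(\theta_{t,\tau})\|^4$ is handled in conditional expectation via Assumption~\ref{ass::moment} with $p\ge 2$, giving $C_4+C_4(F(\theta_{t,\tau})-F(\theta^*))^2$, which is $O(1)$ a.s.\ by Theorem~\ref{theo::consistency}. After dividing by $\gamma_{t+1}^2\beta_{t+1}^2$, this leaves a coefficient of order $\gamma_{t+1}^2\|A_t\|_F^2 = O(t^{1-3\gamma})$, summable as soon as $\gamma>2/3$, and the remaining factor $\|A_t\Sigma_tA_t\|_F^2$ is split as $2\|A_t\Sigma_tA_t-I_d\|_F^2+2d$ so that the $\|D_t\|_F^2$ part feeds directly into Robbins–Siegmund (this is also why the lemma is formulated as a decomposition $R_{0,t}\|A_t\Sigma_tA_t-I_d\|_F^2+\tilde R_{0,t}$ rather than as a literal finite sum). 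Your own early instinct in the proposal — that ``one must exploit that the extra gradient norm $\|g_{t+1}\|^2$ beyond what the truncation controls is small in expectation'' — was exactly the right one; the purely deterministic plan you settled on abandons it prematurely, and the moment assumption is not optional here.
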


\begin{lem}\label{lem2}
Under the same assumptions as in Theorem \ref{theo::rate} there are random positive sequences $R_{1,t}$ and $R_{t}^{*}$
satisfying
\[
\sum_{t \geq 1 } R_{1,t} < + \infty \quad a.s. \quad \quad \text{and} \quad \quad 
\sum_{t \geq 1 } R_{2,t}  < + \infty \quad a.s.
\]
such that
\begin{align*}
\frac{1}{\gamma_{t+1}^{2}\beta_{t+1}^{2}}\mathbb{E}\cro{ \left\| \gamma^2_{t+1}\Sigma_{t}\pa{A_tW_t-I_d}\mathbf{1}_{\acco{\norm{Q_t}_F\le\beta_{t+1}}}\right\|_{F}^{2}| \mathcal{F}_t} \leq \left( 1+ R_{1,t} \right) \left\| D_{t} \right\|_{F}^{2} + R_{2,t}.
\end{align*}

\end{lem}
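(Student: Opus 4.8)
\noindent\textbf{Proof proposal for Lemma \ref{lem2}.}
The plan is to first absorb the power of $\gamma_{t+1}$: since $\frac{1}{\gamma_{t+1}^{2}\beta_{t+1}^{2}}\norm{\gamma_{t+1}^{2}M}_{F}^{2}=\frac{\gamma_{t+1}^{2}}{\beta_{t+1}^{2}}\norm{M}_{F}^{2}$, the left-hand side equals $\frac{\gamma_{t+1}^{2}}{\beta_{t+1}^{2}}\mathbb{E}\cro{\norm{\Sigma_{t}\pa{A_{t}W_{t}-I_{d}}\mathbf{1}_{\acco{\norm{Q_t}_F\le\beta_{t+1}}}}_{F}^{2}\mid\mathcal{F}_{t}}$. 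Because $\Sigma_{t}=\Sigma(\theta_{t,\tau})$ and $A_{t}$ are $\mathcal{F}_{t}$-measurable, one can pull $\Sigma_{t}$ out of the Frobenius norm, obtaining the bound $\frac{\gamma_{t+1}^{2}}{\beta_{t+1}^{2}}\norm{\Sigma_{t}}_{op}^{2}\,\mathbb{E}\cro{\norm{A_{t}W_{t}-I_{d}}_{F}^{2}\mathbf{1}_{\acco{\norm{Q_t}_F\le\beta_{t+1}}}\mid\mathcal{F}_{t}}$. Since Assumptions \ref{ass1}, \ref{ass::moment} and \ref{ass::hess} hold, Theorem \ref{theo::consistency} gives $\theta_{t,\tau}\to\theta^{*}$ a.s., so by the $L_{\Sigma}$-Lipschitz property of $\Sigma$ (Assumption \ref{ass::sigma::lip}) we get $\norm{\Sigma_{t}}_{op}=O(1)$ a.s.

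The second step exploits the truncation. Recall that $A_{t}W_{t}=A_{t}\widehat{\Sigma}_{t+1}A_{t}=\pa{A_{t}g_{t+1}(\theta_{t,\tau})}\pa{A_{t}g_{t+1}(\theta_{t,\tau})}^{T}$ is of rank one, hence $\norm{A_{t}W_{t}}_{F}=\norm{A_{t}g_{t+1}(\theta_{t,\tau})}^{2}=g_{t+1}(\theta_{t,\tau})^{T}A_{t}^{2}g_{t+1}(\theta_{t,\tau})\le\lambda_{\max}(A_{t})\,g_{t+1}(\theta_{t,\tau})^{T}A_{t}g_{t+1}(\theta_{t,\tau})=\lambda_{\max}(A_{t})\norm{Q_{t}}_{F}$, where we used $A_{t}^{2}\preceq\lambda_{\max}(A_{t})A_{t}$ since $A_{t}$ is positive. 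On the event $\acco{\norm{Q_t}_F\le\beta_{t+1}}$ this is at most $\lambda_{\max}(A_{t})\beta_{t+1}$, so $\norm{A_{t}W_{t}-I_{d}}_{F}\mathbf{1}_{\acco{\norm{Q_t}_F\le\beta_{t+1}}}\le\lambda_{\max}(A_{t})\beta_{t+1}+\sqrt{d}$, and since this bound is $\mathcal{F}_{t}$-measurable,
\[
\mathbb{E}\cro{\norm{A_{t}W_{t}-I_{d}}_{F}^{2}\mathbf{1}_{\acco{\norm{Q_t}_F\le\beta_{t+1}}}\mid\mathcal{F}_{t}}\le 2\lambda_{\max}(A_{t})^{2}\beta_{t+1}^{2}+2d .
\]

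The third step is to conclude by plugging in. By \eqref{maj::an}, $\lambda_{\max}(A_{t})=O(t^{1-\gamma})$ a.s., so the left-hand side of the lemma is bounded by $\norm{\Sigma_{t}}_{op}^{2}\pa{2\gamma_{t+1}^{2}\lambda_{\max}(A_{t})^{2}+2d\,\gamma_{t+1}^{2}\beta_{t+1}^{-2}}=O(t^{2-4\gamma})+O(t^{-2\gamma-2\beta})$ a.s. The hyperparameter conditions of Theorem \ref{theo::rate} force $\gamma>3/4$ (add $2\gamma-2\beta>1$ to $2(\gamma+\beta)>2$) as well as $\gamma>1/2$ and $\beta>0$, so both series converge almost surely. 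One may therefore take $R_{1,t}\equiv 0$ and $R_{2,t}:=\norm{\Sigma_{t}}_{op}^{2}\pa{2\gamma_{t+1}^{2}\lambda_{\max}(A_{t})^{2}+2d\,\gamma_{t+1}^{2}\beta_{t+1}^{-2}}$, which is summable a.s. The only point requiring care is the measurability bookkeeping (that $\Sigma_{t}$ and $A_{t}$ are $\mathcal{F}_{t}$-measurable while $W_{t}$ and $Q_{t}$ are not) together with the use of the already-established facts \eqref{maj::an} and Theorem \ref{theo::consistency}; there is no genuine analytic obstacle, since the truncation renders $\norm{A_{t}W_{t}-I_{d}}_{F}^{2}\mathbf{1}$ essentially deterministic given $\mathcal{F}_{t}$. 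If a genuine $\norm{D_{t}}_{F}^{2}$ contribution is desired, one may instead write $A_{t}W_{t}-I_{d}=A_{t}(\widehat{\Sigma}_{t+1}-\Sigma_{t})A_{t}+(A_{t}\Sigma_{t}A_{t}-I_{d})$ and apply Lemma \ref{lemlink} to the second summand, but this refinement is not needed here.
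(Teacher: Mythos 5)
Your proof is correct, and it takes a genuinely different route from the paper's. The paper's argument discards the truncation indicator outright, bounding $\mathbb{E}[\|A_tW_t-I_d\|_F^2\mathbf{1}\mid\mathcal{F}_t]\le\mathbb{E}[\|A_tW_t-I_d\|_F^2\mid\mathcal{F}_t]$ and then invoking Assumption~\ref{ass::moment} (fourth moment of the gradient) together with the identity $\|A_t\|_F^4\lesssim\|A_t\Sigma_tA_t-I_d\|_F^2+\mathrm{cst}$ to push everything onto $\|D_t\|_F^2$; this is why their $R_{1,t}$ is nonzero. You instead exploit the truncation head-on: since $A_tW_t=(A_tg_{t+1})(A_tg_{t+1})^T$ has rank one, $\|A_tW_t\|_F=g_{t+1}^TA_t^2g_{t+1}\le\lambda_{\max}(A_t)\,g_{t+1}^TA_tg_{t+1}=\lambda_{\max}(A_t)\|Q_t\|_F\le\lambda_{\max}(A_t)\beta_{t+1}$ on the event, which is $\mathcal{F}_t$-measurable. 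This makes the conditional expectation trivial and bypasses Assumption~\ref{ass::moment} entirely, yielding a fully summable bound (so $R_{1,t}\equiv 0$) — strictly stronger than what the lemma asks for. The hyperparameter bookkeeping is also right: $2\gamma-2\beta>1$ and $\gamma+\beta>1$ give $\gamma>3/4$, hence $t^{2-4\gamma}$ is summable, and $\gamma+\beta>1$ gives summability of $t^{-2\gamma-2\beta}$. The trade-off is that the paper's moment-based route is more uniform with the treatment of the other terms in $R_t$ (Lemmas~\ref{lem1} and~\ref{lem3}) and is the one that actually generalizes verbatim to the streaming setting; your truncation-based bound is cleaner and self-contained but specific to the piece at hand. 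Both are valid proofs of the stated inequality.
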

\begin{lem}\label{lem3}
Under the same assumptions as in Theorem \ref{theo::rate},
\[
\sum_{t \geq 0} \frac{1}{\gamma_{t+1}^{2}\beta_{t+1}^{2}}\mathbb{E}\cro{  \left\|  \gamma^2_{t+1}\Sigma_{t}\mathbf{1}_{\acco{\norm{Q_t}_F\le\beta_{t+1}}} \right\|_{F}^{2}| \mathcal{F}_t} < + \infty \quad a.s.
\]
\end{lem}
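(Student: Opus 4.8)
The plan is to observe that in the matrix $\gamma_{t+1}^{2}\Sigma_{t}\mathbf{1}_{\acco{\norm{Q_t}_F\le\beta_{t+1}}}$, the only factor that is not $\mathcal{F}_{t}$-measurable is the truncation indicator, which is bounded by $1$. Since $\gamma_{t+1}$ and $\beta_{t+1}$ are deterministic and $\Sigma_{t}=\Sigma(\theta_{t,\tau})$ is $\mathcal{F}_{t}$-measurable, conditioning on $\mathcal{F}_{t}$ gives
\[
\frac{1}{\gamma_{t+1}^{2}\beta_{t+1}^{2}}\mathbb{E}\cro{\norm{\gamma^{2}_{t+1}\Sigma_{t}\mathbf{1}_{\acco{\norm{Q_t}_F\le\beta_{t+1}}}}_{F}^{2}\,\big|\,\mathcal{F}_{t}}=\frac{\gamma_{t+1}^{2}}{\beta_{t+1}^{2}}\norm{\Sigma_{t}}_{F}^{2}\,\mathbb{E}\cro{\mathbf{1}_{\acco{\norm{Q_t}_F\le\beta_{t+1}}}\,\big|\,\mathcal{F}_{t}}\le\frac{\gamma_{t+1}^{2}}{\beta_{t+1}^{2}}\norm{\Sigma_{t}}_{F}^{2}.
\]

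Next I would control $\norm{\Sigma_{t}}_{F}$ uniformly in $t$, almost surely. Assumption \ref{ass::sigma::lip} gives $\norm{\Sigma_{t}}_{F}\le\norm{\Sigma(\theta^{*})}_{F}+L_{\Sigma}\norm{\theta_{t,\tau}-\theta^{*}}$, and Theorem \ref{theo::consistency}, whose hypotheses are all implied by those of Theorem \ref{theo::rate}, yields $\theta_{t,\tau}\to\theta^{*}$ a.s.; hence $\norm{\Sigma_{t}}_{F}^{2}=O(1)$ a.s. Substituting $\gamma_{t+1}=c_{\gamma}(t+1)^{-\gamma}$ and $\beta_{t+1}=c_{\beta}(t+1)^{\beta}$, the general term of the series is therefore $O\pa{(t+1)^{-2(\gamma+\beta)}}$ a.s.

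It then only remains to invoke the exponent constraints. Among the hypotheses of Theorem \ref{theo::rate} we have $\gamma+\beta>1$, so in particular $2(\gamma+\beta)>1$ and $\sum_{t\ge 0}(t+1)^{-2(\gamma+\beta)}<+\infty$, which gives the claimed almost sure summability. The argument is essentially routine: the only points that need care are the measurability bookkeeping that allows the indicator to be replaced by $1$ after conditioning on $\mathcal{F}_{t}$, and the use of the almost sure consistency of $\theta_{t,\tau}$ from Theorem \ref{theo::consistency} — rather than any convergence rate, which is not yet available at this stage of the proof of Theorem \ref{theo::rate} — to upgrade the Lipschitz estimate into an $O(1)$ bound on $\norm{\Sigma_{t}}_{F}^{2}$. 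No genuine obstacle is anticipated.
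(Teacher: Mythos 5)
Your proof is correct and is exactly the ``quite straightforward'' argument the paper alludes to but does not write out. The measurability bookkeeping ($\Sigma_t=\Sigma(\theta_{t,\tau})$ is $\mathcal{F}_t$-measurable, the indicator is bounded by $1$), the $O(1)$ bound on $\|\Sigma_t\|_F$ via the Lipschitz assumption and the consistency of $\theta_{t,\tau}$ from Theorem \ref{theo::consistency} (whose hypotheses are implied by those of Theorem \ref{theo::rate}), and the summability of $(t+1)^{-2(\gamma+\beta)}$ under $\gamma+\beta>1$ are all exactly what is needed — and you correctly avoid appealing to any convergence rate not yet established at this point in the argument.
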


The proofs of Lemma \ref{lem1} and \Ref{lem2} are given thereafter and the one of Lemma \ref{lem3} is quite straightforward. Observe that we juste give the upper bound for $\gamma_{t+1}^{-2}\beta_{t+1}^{-2}\mathbb{E}\left[ R_{t} |\mathcal{F}_{t} \right]$ since the upper bound of $2\mathbb{E}\left[ R_{t} |\mathcal{F}_{t} \right]$ is direct application of these last ones (and we have $\gamma_{t+1}^{-2}\beta_{t+1}^{-2} \xrightarrow[t\to + \infty]{} + \infty$).

\begin{proof}[Proof of Lemma \ref{lem1}]
Thanks to Assumption \ref{ass::moment}, since $\| AB \|_{F} \leq \| A \|_{F} \| B\|_{F}$, we have
\begin{align*}
\mathbb{E} \left[ 	\norm{A_tW_t}_F^4\mathbf{1}_{\acco{\norm{Q_t}_F\le\beta_{t+1}}} |\mathcal{F}_{t} \right] &\le \mathbb{E} \left[  \norm{g_{t+1}\pa{\theta_{t,\tau}}}^4\norm{A_tg_{t+1}\pa{\theta_{t,\tau}}}^4\mathbf{1}_{\acco{\norm{Q_t}_F\le\beta_{t+1}}}|\mathcal{F}_{t} \right] \\
	&\le \mathbb{E} \left[  \norm{g_{t+1}\pa{\theta_{t,\tau}}}^4\norm{A_t^{1/2}Q_tA_t^{1/2}}_F^2\mathbf{1}_{\acco{\norm{Q_t}_F\le\beta_{t+1}}} |\mathcal{F}_{t} \right]\\
	&\le \beta_{t+1}^2 \left(C_{4} + C_{4}\left( F \left( {\theta}_{t,\tau} \right) - F\left(\theta^{*} \right) \right)^2\right)\norm{A_t}_F^2 .
\end{align*}

Then, remark that
\begin{align}
\notag  \frac{1}{\gamma_{t+1}^{2}\beta_{t+1}^{2}}   & \mathbb{E} \left[ 	\norm{\gamma_{t+1}^2A_tW_t\Sigma_{t} A_tW_t\mathbf{1}_{\acco{\norm{Q_t}_F\le\beta_{t+1}}}}^2_F |\mathcal{F}_{t} \right]   \leq \frac{\gamma_{t+1}^{2}}{\beta_{t+1}^{2}} \mathbb{E} \left[ \left\| A_{t}W_{t} \right\|_{F}^{4} |\mathcal{F}_{t} \right] \left\| A_{t} \Sigma_{t} A_{t} \right\|^{2} \\ 
\notag &\quad\le   {\gamma_{t+1}^2 \left(C_{4} + C_{4}\left( F \left( {\theta}_{t,\tau} \right) - F\left(\theta^{*} \right) \right)^2\right)\norm{A_t}_F^2 } \norm{A_t\Sigma_tA_t}^2_F \\
&\notag \quad\le \underbrace{2  {\gamma_{t+1}^2 \left(C_{4} + C_{4}\left( F \left( {\theta}_{t,\tau} \right) - F\left(\theta^{*} \right) \right)^2\right)\norm{A_t}_F^2 }}_{R_{0,t}}\norm{A_t\Sigma_tA_t-I_d}^2_F\\
&\qquad+\underbrace{2  {\gamma_{t+1}^2 \left(C_{4} + C_{4}\left( F \left( {\theta}_{t,\tau} \right) - F\left(\theta^{*} \right) \right)^2\right)\norm{A_t}_F^2 }}_{\tilde{R}_{0,t}}.\label{def::R0}
\end{align}
 {Thanks to Theorem \ref{theo::consistency},  $\norm{A_t}^2_F = \mathcal{O}\pa{t^{1-\gamma}}$, and since $\gamma >3/4$}, one has
\begin{equation*}
\sum_{t\geq 0} R_{0,t}  < + \infty \quad a.s. \quad \quad \text{ and } \quad \quad  \sum_{t\geq 0} \tilde{R}_{0,t}  < + \infty \quad a.s.
\end{equation*}
\end{proof}

\begin{proof}[Proof of Lemma \ref{lem2}]
First, note that since $\left\| A_{t} W_{t} - I_{d} \right\|_{F}^{2} \leq 2 \left\| A_{t} W_{t} \right\|_{F}^{2} +2 \left\| I_{d} \right\|_{F}^{2} \leq 2 \left\| A_{t} \right\|_{F}^{2} \left\|  W_{t} \right\|_{F}^{2} +2 d$, it comes
\begin{align*} 
  \mathbb{E}\cro{\norm{A_tW_t-I_d}^2_F\mathbf{1}_{\acco{\norm{Q_t}_F\le\beta_{t+1}}}| \mathcal{F}_t} &\leq  
 \mathbb{E}\cro{\norm{A_tW_t-I_d}^2_F | \mathcal{F}_t} \\
 &\leq 2\| A_{t}\|_{F}^{4} \mathbb{E}\left[ \| g_{t+1}\pa{\theta_{t,\tau}} g_{t+1}\pa{\theta_{t,\tau}}^{T} \|^2_{F} |\mathcal{F}_{t} \right] + 2d.
\end{align*}
Thanks to Assumption \ref{ass::moment}, one has 
\begin{align*}
\mathbb{E}& \cro{\norm{A_tW_t-I_d}^2_F| \mathcal{F}_{t}}   \leq  2\| A_{t}\|_{F}^{4} \left(C_{4} + C_{4}\left( F \left( {\theta}_{t,\tau} \right) - F\left(\theta^{*} \right) \right)^2\right) + 2d .
\end{align*}
Observe that $\Sigma_{t}$ converges almost surely to $\Sigma$ which is positive, so that
\begin{align*}
\notag \left\| A_{t} \right\|_{F}^{4} &  \leq \left\| A_{t} \right\|_{F}^{4}\mathbf{1}_{\lambda_{\min}(\Sigma_{t}) \geq \lambda_{\min}(\Sigma /2)} + \left\| A_{t} \right\|_{F}^{4}\mathbf{1}_{\lambda_{\min}(\Sigma_{t}) > \lambda_{\min}(\Sigma /2)} \\
& \leq  \frac{1}{\lambda_{\min}\left( \Sigma_{t} \right)^{2}}\left\| A_{t} \right\|_{F}^{4}\mathbf{1}_{\lambda_{\min}(\Sigma_{t}) \geq \lambda_{\min}(\Sigma /2)} + \left\| A_{t} \right\|_{F}^{4}\mathbf{1}_{\lambda_{\min}(\Sigma_{t}) > \lambda_{\min}(\Sigma /2)}
\end{align*}	
Then, since $\| A \|_{F}^{2} \leq 2\| A-B \|_{F}^{2} + 2\| B \|^{2}$,
\begin{align}\label{maj::an::sauvage}
\notag \left\| A_{t} \right\|_{F}^{4} & \leq \frac{4}{\lambda_{\min}(\Sigma)^{2}}\left\| A_{t} \Sigma_{t} A_{t} \right\|_{F}^{2} + \left\| A_{t} \right\|^{4}  \mathbf{1}_{\lambda_{\min}(\Sigma_{t}) <\lambda_{\min}(\Sigma)/2} \\
& \leq \frac{8}{\lambda_{\min}(\Sigma)^{2}}\left\| A_{t} \Sigma_{t} A_{t} - I_{d} \right\|_{F}^{2} + \frac{8}{\lambda_{\min}(\Sigma)^{2}}d  + \left\| A_{t} \right\|^{4}  \mathbf{1}_{\lambda_{\min}(\Sigma_{t}) <\lambda_{\min}(\Sigma)/2},
\end{align}
and
\begin{align}
\label{def::R1::tilde}\mathbb{E} & \cro{\norm{A_tW_t-I_d}^2_F\mathbf{1}_{\acco{\norm{Q_t}_F\le\beta_{t+1}}}| \mathcal{F}_{t}}   \leq \overbrace{ \left(C_{4}+C_{4} \left( F \left(  \theta_{t,\tau} \right)  - F\left(  \theta^{*} \right) \right)^{2} \right)  \frac{16}{\lambda_{\min}^2(\Sigma)}}^{=: \tilde{R}_{1,t}} \| A_{t}\Sigma_{t}A_{t} - I_d \|_{F}^2 \\
\label{def::R2::tilde} &  +  \underbrace{\mathcal{C}_t  \frac{16}{\lambda_{\min}(\Sigma)^2}d +2d  +  2\| A_{t} \|_{F}^{4} \mathbf{1}_{\lambda_{\min}(\Sigma_{t}) <  \lambda_{\min}(\Sigma)/2}\mathcal{C}_t}_{=:\tilde{R}_{2,t}} 
\end{align}
where $\mathcal{C}_t = C_{4} + C_{4}\left( F \left( {\theta}_{t,\tau}\right) - F \left( \theta^{*} \right) \right)^{2}$.
Since $\mathbf{1}_{\lambda_{\min}(\Sigma_{t}) < \lambda_{\min}(\Sigma)/2}$  converges almost surely to $0$
\[
\sum_{t \geq 1} \| A_{t} \|_{F}^{4} \frac{1}{\gamma_{t+1}^{2}\beta_{t+1}^{2}} \mathbf{1}_{\lambda_{\min}(\Sigma_{t}) <  \lambda_{\min}(\Sigma)/2}(C_4 + C_4\left( F \left( {\theta}_{t,\tau} \right) - F\left(\theta^{*} \right) \right)^2) < + \infty \quad a.s.
\]
Then, 
\begin{align}
\label{def::R1}\frac{1}{\gamma_{t+1}^{2}\beta_{t+1}^{2}} \mathbb{E}\left[ \left\| \gamma^2_{t+1}\Sigma_{t}\pa{A_tW_t-I_d}\mathbf{1}_{\acco{\norm{Q_t}_F\le\beta_{t+1}}}\right\|_{F} |\mathcal{F}_{t} \right] &  \leq \overbrace{\frac{\gamma_{t+1}^{2}}{\beta_{t+1}^{2}}\left\| \Sigma_{t} \right\|_{F}^{2} \tilde{R}_{1,t}}^{=: R_{1,t}} \left\| A_{t}\Sigma_{t}A_{t} - I_{d} \right\|_{F} \nonumber \\
& + \underbrace{\frac{\gamma_{t+1}^{2}}{\beta_{t+1}^{2}}\left\| \Sigma_{t} \right\|_{F}^{2} \tilde{R}_{2,t}}_{=: R_{2,t}}
\end{align}
with 
\[
\sum_{t \geq 0} R_{1,t} < + \infty \quad a.s \quad \quad \text{ and } \quad \quad \sum_{t \geq 0} R_{2,t} < + \infty \quad a.s 
\]
\end{proof}

\subsection{Proof of Lemma \ref{lemlink}}\label{sec::lemlink}

Observe that
\begin{align*}
	\norm{A_{t}\Sigma_{t}A_{t} - I_{d}}_F &\le \norm{A_t\Sigma_{t-1}A_t - I_{d}}_F + \norm{A_t\pa{\Sigma_{t-1}-\Sigma_{t}}A_t}_F.
\end{align*}
In addition, thanks to Assumption \ref{ass::sigma::lip}
\begin{align*}
	\norm{A_t\pa{\Sigma_{t-1}-\Sigma_{t}}A_t}_F^{2} & \le \left\| A_{t} \right\|_{F}^{4} \left\| \Sigma_{t-1} - \Sigma_{t} \right\|_{F}^{2}  \\ 
	&  \leq \left\| A_{t} \right\|_{F}^{4} L_{\Sigma} \left\| \theta_{t,\tau} - \theta_{t-1,\tau} \right\|^{2}  \\
	& \leq  \left\| A_{t} \right\|_{F}^{4} L_{\Sigma} \frac{2\ln t^{2\tau }}{\left( \sum_{k=0}^{t-1}\ln (k+1)^{\tau }\right)^{2}} \left( \left\| \theta_{t-1,\tau} - \theta^{*} \right\|^{2} + \left\| \theta_{t} - \theta^{*} \right\|^{2} \right) 
\end{align*}
With the same arguments as for inequality \eqref{maj::an::sauvage}, it comes
\begin{align*}
	\norm{A_t\pa{\Sigma_{t-1}-\Sigma_{t}}A_t}_F^{2}& \leq \left(  \frac{16}{\lambda_{\min}(\Sigma)^{2}}\left\| A_{t-1} \Sigma_{t-1} A_{t} - I_{d} \right\|_{F}^{2} + \frac{16}{\lambda_{\min}(\Sigma)^{2}}d  + \left\| A_{t} \right\|^{4}  \mathbf{1}_{\lambda_{\min}(\Sigma_{t-1}) <\lambda_{\min}(\Sigma)/2} \right) \\
	& \times L_{\Sigma} \frac{2\ln t^{2\tau }}{\left( \sum_{k=0}^{t-1}\ln (k+1)^{\tau }\right)^{2}} \left( \left\| \theta_{t-1,\tau} - \theta^{*} \right\|^{2} + \left\| \theta_{t} - \theta^{*} \right\|^{2} \right) 
	\end{align*}
In order to avoid problems in application of Robbins-Siegmund Theorem, we  now have  to give a first rate of converence of $\theta_{t}$ and $\theta_{t,\tau}$.
This is given by the following lemma.
\begin{lem}\label{lem4}
Under the same assumptions as in Theorem \ref{theo::rate},
for all $\mu < 2 \gamma  + 2\nu - 3$, 
\[
\left\| \theta_{t} - \theta^{*} \right\|^{2} = \mathcal{O}\left( t^{-\mu} \right) \quad a.s. \quad \quad \text{and} \quad \quad \left\| \theta_{t,\tau} - \theta^{*} \right\|^{2} = \mathcal{O}\left( t^{-\mu} \right) \quad a.s.
\]
\end{lem}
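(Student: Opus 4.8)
The plan is to establish the $\mathcal{O}(t^{-\mu})$ rate for $\theta_t$ and $\theta_{t,\tau}$ by invoking the abstract convergence-rate result of \cite{GBW2023OND} (Theorem 2 there, or its quantitative refinement), whose hypotheses require only (i) strong consistency of $\theta_t$, which is Theorem \ref{theo::consistency}, and (ii) control of the eigenvalues of the random step matrices $A_{t,\tau'}$, namely $\lambda_{\max}(A_{t,\tau'}) = \mathcal{O}(t^{1-\gamma})$ and $\lambda_{\max}(A_{t,\tau'}^{-1}) = \mathcal{O}(\beta_t)$ almost surely. Both of these eigenvalue bounds have already been proved in the course of Section \ref{sec::proConsisTheta} — see \eqref{maj::an} and \eqref{min::lambda} — but crucially they were obtained \emph{without} assuming anything on the rate of $\theta_t$, so there is no circularity. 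The point of this lemma (as opposed to Conclusion 2 of the proof of Theorem \ref{theo::rate}) is that it does not presuppose $A_t \to \Sigma^{-1/2}$; it only uses the crude polynomial envelopes on the spectrum, which is exactly why it can be used as an ingredient inside the proof of Lemma \ref{lemlink} and hence of Theorem \ref{theo::rate} itself.

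Concretely, I would proceed as follows. First, recall the SGD-with-random-preconditioner recursion \eqref{thetaT}: $\theta_{t+1} = \theta_t - \nu_{t+1}\hat A_t g_{t+1}(\theta_{t,\tau})$ with $\hat A_t = A_{t,\tau}$. Write a Taylor expansion of $\nabla F$ at $\theta^*$ using Assumption \ref{ass::hess} (so $\nabla F$ is $L_{\nabla F}$-Lipschitz), decompose $g_{t+1}(\theta_{t,\tau}) = \nabla F(\theta_{t,\tau}) + \xi_{t+1}$ with $\xi_{t+1}$ a martingale increment, and form the Lyapunov function $V_t := \|\theta_t - \theta^*\|^2$ (or a weighted/telescoped version of it). Expanding $V_{t+1}$ conditionally on $\mathcal{F}_t$, the cross term produces a contraction of order $\nu_{t+1}\lambda_{\min}(A_{t,\tau}) \gtrsim \nu_{t+1}\beta_t^{-1} = c\, t^{-\nu-\beta}$ (using $\beta_t = c_\beta t^\beta$), while the quadratic noise term is controlled by $\nu_{t+1}^2 \lambda_{\max}(A_{t,\tau})^2 \mathbb{E}[\|g_{t+1}\|^2|\mathcal{F}_t] = \mathcal{O}(t^{-2\nu} t^{2(1-\gamma)} (1 + V_t + \dots))$ via Assumption \ref{ass::moment} with $p=1$. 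One then runs the standard Robbins–Siegmund / induction-on-the-rate argument: assuming $V_t = \mathcal{O}(t^{-\mu})$ for the current candidate $\mu$, the recursion $\mathbb{E}[V_{t+1}|\mathcal{F}_t] \le (1 - c\,t^{-\nu-\beta} + \text{sppt})V_t + \mathcal{O}(t^{-2\nu+2-2\gamma})$ is summable-error-type and forces $V_t = \mathcal{O}(t^{-\mu'})$ for any $\mu'$ below the "noise exponent minus step-decay exponent" threshold, which after bookkeeping is $2\nu + 2\gamma - 3$; since $2\gamma + 2\nu > 3$ this threshold is positive, and one iterates (finitely many bootstraps) to reach any $\mu < 2\gamma + 2\nu - 3$. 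The bound for $\theta_{t,\tau}$ then follows because it is a log-weighted average of the $\theta_k$, and a weighted average of sequences that are $\mathcal{O}(k^{-\mu})$ is itself $\mathcal{O}(t^{-\mu})$ (up to logarithmic factors absorbed by taking $\mu$ strictly smaller).

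The main obstacle, and the reason the exponent is only $2\gamma+2\nu-3$ rather than the optimal $\nu$, is the presence of the random preconditioner $A_{t,\tau}$ whose smallest eigenvalue is only known to decay no faster than $\beta_t^{-1} = t^{-\beta}$: the effective contraction rate in the Lyapunov recursion is $\nu_{t+1}\lambda_{\min}(A_{t,\tau})$, which is $t^{-(\nu+\beta)}$, strictly weaker than the $t^{-\nu}$ one would have with a well-conditioned preconditioner, while simultaneously the largest eigenvalue inflates the noise term by $t^{2(1-\gamma)}$. Balancing these two effects — contraction $t^{-(\nu+\beta)}$ against noise variance $t^{-2\nu+2-2\gamma}$ — and verifying that the resulting Robbins–Siegmund inequality has summable remainder (which uses $p=1$ in Assumption \ref{ass::moment} and the already-established $\lambda_{\max}(A_t) = \mathcal{O}(t^{1-\gamma})$) is the delicate bookkeeping; everything else is a routine stochastic-approximation estimate. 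I would handle it by invoking the corresponding lemma from \cite{GBW2023OND} verbatim once the two eigenvalue envelopes are cited, so that the proof reduces to checking that $2\gamma + 2\nu - 3 > 0$ and that $\mu$ can be taken arbitrarily close to it.
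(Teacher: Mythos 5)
Your overall strategy — form a weighted Lyapunov quantity $t^\mu V_t$, use the eigenvalue envelopes $\lambda_{\max}(A_{t,\tau'})=\mathcal{O}(t^{1-\gamma})$ and $\lambda_{\min}(A_{t,\tau'})\gtrsim \beta_t^{-1}$ from Section~\ref{sec::proConsisTheta} (which, you rightly note, were obtained without any rate information on $\theta_t$, so there is no circularity), and run Robbins--Siegmund to extract the exponent $2\gamma+2\nu-3$ from summability of the noise term — is essentially the paper's strategy, and your bookkeeping of the exponent is correct. However, there is one genuine gap in the concrete part of your sketch: you take $V_t := \|\theta_t-\theta^*\|^2$ as the Lyapunov function and claim the cross term "produces a contraction of order $\nu_{t+1}\lambda_{\min}(A_{t,\tau})$." With this $V_t$ the cross term is $-2\nu_{t+1}(\theta_t-\theta^*)^T A_{t,\tau}\nabla F(\theta_t)$, and there is no inequality of the form $(\theta_t-\theta^*)^T A_{t,\tau}\nabla F(\theta_t)\ge c\,\lambda_{\min}(A_{t,\tau})\|\theta_t-\theta^*\|^2$ in general: when the preconditioner $A_{t,\tau}$ and the Hessian $H$ do not commute, the symmetric part of $A_{t,\tau}H$ can fail to be positive semi-definite, so this term can even be negative and does not give a clean contraction. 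The paper sidesteps this precisely by working with $V_t := F(\theta_t)-F(\theta^*)$: there the cross term is $-\nu_{t+1}\nabla F(\theta_t)^T A_{t,\tau'}\nabla F(\theta_t)\le -\nu_{t+1}\lambda_{\min}(A_{t,\tau'})\|\nabla F(\theta_t)\|^2$ (always true by positivity of $A_{t,\tau'}$, regardless of commutation), and the local Polyak--Lojasiewicz inequality supplied by Assumption~\ref{ass1} converts $\|\nabla F(\theta_t)\|^2$ back into a multiple of $V_t$ near $\theta^*$, with an indicator term $\mathbf{1}_{\|\theta_t-\theta^*\|>r_0}$ (vanishing a.s.\ by Theorem~\ref{theo::consistency}) to handle the excursions. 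Only at the very end does local strong convexity convert $F(\theta_t)-F(\theta^*)=\mathcal{O}(t^{-\mu})$ into $\|\theta_t-\theta^*\|^2=\mathcal{O}(t^{-\mu})$. You should make this change of Lyapunov function; without it the "cross-term contraction" step of your proposal does not go through.

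Two smaller remarks. First, your opening suggestion to cite "Theorem 2 of \cite{GBW2023OND}, or its quantitative refinement" as a black box whose hypotheses are just consistency plus the eigenvalue envelopes is not supported by the paper: in the paper, Theorem~2 of \cite{GBW2023OND} is invoked only in Conclusion~2 of the proof of Theorem~\ref{theo::rate}, after $A_{t,\tau'}\to\Sigma^{-1/2}$ has been established, and it delivers the stronger rate $\mathcal{O}(\ln t/t^\nu)$; it is not the tool used for the crude $t^{-\mu}$ bound of this lemma. Second, no bootstrap iteration is needed: the paper fixes a single $\mu<2\gamma+2\nu-3$, shows that $t^\mu V_t$ is an almost-supermartingale (noise term summable because $2\gamma+2\nu-\mu-2>1$, contraction coefficient $\zeta_t\le1$ eventually, indicators vanish by consistency), and applies Robbins--Siegmund once. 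Also, a typographical slip: in \eqref{thetaT} the $\theta$-update uses $g_{t+1}(\theta_t)$, not $g_{t+1}(\theta_{t,\tau})$.
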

The proof is given in Section \ref{sec::prooflem4}.

\paragraph*{Upper bound of $\left\| A_{t}\Sigma_{t}A_{t} - I_{d} \right\|_{F}^{2}$.}
For all $\mu < 2 \gamma  + 2\nu - 3$, since for all $a,b,c$ with $c> 0$, one has $(a+b)^{2} \leq (1+c)a^{2} + (1+c^{-1})b^{2}$, it comes 
\[
\left\| A_{t}\Sigma_{t}A_{t} - I_{d} \right\|_{F}^{2} \leq \left( 1 + \frac{1}{t^{1+\mu/2}} \right) \left\| D_{t} \right\|^{2} + \left( 1+ t^{1+\mu /2} \right) \norm{A_t\pa{\Sigma_{t-1}-\Sigma_{t}}A_t}_F^{2}
\]
it comes
\begin{align*}
\left\| A_{t}\Sigma_{t}A_{t} - I_{d} \right\|_{F}^{2} \leq \left( 1+ R_{3,t} \right) \left\| D_{t} \right\|_{F}^{2} + R_{4,t}
\end{align*}
with
\begin{align}
\label{def::R3} R_{3,t} & = \frac{1}{t^{1+\mu/2}} +  \frac{16L_{\Sigma}}{\lambda_{\min}(\Sigma)^{2}} t^{1+\mu /2 }L_{\Sigma} \frac{\ln t^{2\tau }}{\left( \sum_{k=0}^{t-1}\ln (k+1)^{\tau }\right)^{2}} \left( \left\| \theta_{t-1,\tau} - \theta^{*} \right\|^{2} + \left\| \theta_{t} - \theta^{*} \right\|^{2} \right)   \\
\notag   R_{4,t}& = \left( 1+ t^{1+\mu/2} \right)\left( \frac{16}{\lambda_{\min}(\Sigma)^{2}}d^{2} + \left\| A_{t} \right\|^{4}  \mathbf{1}_{\lambda_{\min}(\Sigma_{t-1}) <\lambda_{\min}(\Sigma)/2} \right)   \\
\label{def::R4} & \times L_{\Sigma} \frac{2\ln t^{2\tau }}{\left( \sum_{k=0}^{t-1}\ln (k+1)^{\tau }\right)^{2}} \left( \left\| \theta_{t-1,\tau} - \theta^{*} \right\|^{2} + \left\| \theta_{t} - \theta^{*} \right\|^{2} \right) 
\end{align}
and it comes from Lemma \ref{lem4} that
\[
\sum_{t \geq 1} R_{3,t} < + \infty \quad a.s \quad \quad \text{ and } \quad \quad \sum_{t \geq 1} R_{4,t} < + \infty \quad a.s 
\]

\subsection{Proof of Lemma \ref{lemk2}}\label{sec::lemk2}
\paragraph*{Upper bound of $K_{2,t}$.}
Thanks to Cauchy–Schwarz inequality, we have 
\begin{align*}
	\mathbb{E} \left[ \left| K_{2,t} \right| |\mathcal{F}_{t} \right] \le 2\gamma_{t+1}\norm{A_t}_F\norm{\Sigma_{t}}_F\norm{A_t\Sigma_tA_t-I_d}_F\mathbb{E}\cro{\norm{A_tW_t-I_d}_F\mathbf{1}_{\acco{\norm{Q_t}_F>\beta_{t+1}}} | \mathcal{F}_t} 
\end{align*}
In addition, since $\left\| A_{t}W_{t} - I_{d} \right\|_{F} \leq \left\| A_{t}W_{t} \right\|_{F} + \left\|  I_{d} \right\|_{F} \leq   \left\| A_{t} \right\|_{F} \left\| W_{t} \right\|_{F} + \sqrt{d} $,
\begin{align*}
	\mathbb{E}\cro{\norm{A_tW_t-I_d}_F\mathbf{1}_{\acco{\norm{Q_t}_F>\beta_{t+1}}} | \mathcal{F}_t} &\le \norm{A_t}^2\mathbb{E}\cro{\norm{g_{t+1}\pa{\theta_{t,\tau}}g_{t+1}^T\pa{\theta_{t,\tau}}}_F\mathbf{1}_{\acco{\norm{Q_t}_F>\beta_{t+1}}} | \mathcal{F}_t} \\
	&\quad + \sqrt{d}\mathbb{P}\cro{\norm{Q_t}_F>\beta_{t+1}| \mathcal{F}_t}
\end{align*}
With the help of Assumption \ref{ass::moment} and Markov's inequality, since $\| Q_{t} \|_{F} \leq \| A_{t} \|_{F} \| g_{t+1} (\theta_{t,\tau} ) \|^{2}$, since $\theta_{t,\tau}$ converges almost surely to $\theta^{*}$, and thanks to Theorem \ref{theo::consistency},
\begin{align*}
	\mathbb{P}\cro{\norm{Q_t}_F>\beta_{t+1}| \mathcal{F}_t} \le \frac{\mathbb{E}\cro{\norm{Q_t}^p_F| \mathcal{F}_t}}{\beta_{t+1}^p}\le \frac{ \norm{A_t}_{F}^{p}\mathbb{E}\cro{\norm{g_{t+1}\pa{\theta_{t,\tau}}}^{2p} | \mathcal{F}_t}}{\beta_{t+1}^p} = \mathcal{O}(t^{p(1-\gamma-\beta)}) \quad a.s..
\end{align*}
In a same way, one can check that
\begin{align*}
\mathbb{E}\cro{\norm{g_{t+1}\pa{\theta_{t,\tau}}g_{t+1}^T\pa{\theta_{t,\tau}}}_F\mathbf{1}_{\acco{\norm{Q_t}_F>\beta_{t+1}}} | \mathcal{F}_t} = \mathcal{O}(t^{p(1-\gamma-\beta)/2}) \quad a.s.
\end{align*}
Then, with the help of equality \eqref{maj::an},
\begin{equation}\label{maj::K2}
R_{5,t} := \mathbb{E} \left[ \left| K_{2,t} \right| | \mathcal{F}_{t} \right] =  \mathcal{O}(n^{3-4\gamma + p(1-\gamma-\beta)/2}) \quad a.s.
\end{equation}
Note that $p>\frac{8-8\gamma}{\gamma+\beta-1}$ gives us
$3-4\gamma + p(1-\gamma-\beta)/2<-1.$

\subsection{Proof of Lemma \ref{lem4}}\label{sec::prooflem4}
\begin{proof}[Proof of Lemma \ref{lem4}.]
With the help of a Taylor's expansion of the functional $F$ and thanks to Assumption \ref{ass::hess}, we obtain, denoting $V_{t} = F ( \theta_{t} ) - F ( \theta^{*})$,
\begin{align*}
  V_{t+1}  &  \leq V_{t} +    \nabla F \left( \theta_{t} \right)^{T} \left( \theta_{t+1} - \theta_{t} \right)  + \frac{L_{\nabla F}}{2}    \left\|   \theta_{t+1} - \theta_{t} \right\|^{2} \\
  &   = V_{t} - \nu_{t+1} \nabla F \left( \theta_{t} \right)^{T}A_{t,\tau'} g_{t+1}\left( \theta_{t} \right)  + \frac{L_{\nabla F}}{2}\nu_{t+1}^{2}   \left\|  A_{t,\tau '}g_{t+1}\left( \theta_{t} \right) \right\|^{2}   .
\end{align*}
Taking the conditional expectation, it comes
\begin{align*}
\mathbb{E} \left[ V_{t+1} |\mathcal{F}_{t} \right] \leq V_{t} - \nu_{t+1} \nabla F \left( \theta_{t} \right)^{T}A_{t,\tau'} \nabla F \left( \theta_{t} \right) + \frac{L_{\nabla F}}{2}\nu_{t+1}^{2} \mathbb{E}\left[ \left\|  A_{t,\tau '}g_{t+1}\left( \theta_{t} \right) \right\|^{2} |\mathcal{F}_{t} \right] .
\end{align*}
Then, thanks to Assumption \ref{ass::moment}
\begin{align*}
	\mathbb{E}\left[V_{t+1} |\mathcal{F}_t\right] &\le \left(1+\frac{L_{\nabla F}C_{p}^{1/p}}{2}\nu_{t+1}^2\lambda_{\max}(A_{t,\tau '})^2\right)V_{t}
	 - \nu_{t+1}\nabla F(\theta_t)^T A_{t,\tau '} \nabla F(\theta_t) \\
	 &  + \frac{L_{\nabla F}C_{p}^{1/p}}{2}\nu_{t+1}^2\lambda_{\max}(A_{t,\tau '})^2.
\end{align*}
Thanks to Assumption \ref{ass1}, the functional $G$
 is locally strongly convex, meaning that there exists a neighborhood of $\theta^{*}$  where the smallest eigenvalues of the Hessian are uniformly bounded below by a positive constant. Then, the Polyak-Lojasiewicz inequality is locally satified, i.e there are positive constants  $r_{0},c_{0}$ such that for all $\theta \in \mathcal{B}\left( \theta^{*} , r_{0} \right)$,  
\[
\left\| \nabla G (\theta ) \right\|^{2} \geq c_{0} \left( F(\theta) - F(\theta^{*}) \right)
\] 
Then, since $\mathbf{1}_{\left\| \theta_{t} - \theta^{*} \right\| \leq r_{0}} = 1-  \mathbf{1}_{\left\| \theta_{t} - \theta^{*} \right\| > r_{0}}$, it comes
\begin{align*}
\|\nabla F(\theta_t)\|^2 & \ge  c_{0} \left(F(\theta_t)-F(\theta^{*})\right)\mathbf{1}_{\left\| \theta_{t} - \theta^{*} \right\| \leq  r_{0}} \\
& \ge  c_{0} \left(F(\theta_t)-F(\theta)\right) - c_{0} \left(F(\theta_t)-F(\theta^{*})\right)\mathbf{1}_{\left\| \theta_{t} - \theta^{*} \right\| > r_{0}}.
\end{align*}
Given \(2\gamma+2\nu-2>1\), there exists \(\mu>0\) such that \(\mu < 2\gamma+2\nu-3\). We define \(\widetilde{V}_t := t^{\mu}V_{t}\), thus
\begin{align*}
	\mathbb{E}\left[\widetilde{V}_{t+1}|\mathcal{F}_t\right] &= \left(\frac{t+1}{t}\right)^\mu\left(\left(1+\frac{L_{\nabla F}C }{2}\nu_{t+1}^2\lambda_{\max}(A_{t,\tau '})^2\right) - c_0\nu_{t+1}\lambda_{\min}(A_{t,\tau '})\right)\widetilde{V}_t\\
	&+ \frac{L_{\nabla F}C}{2}\nu_{t+1}^2\lambda_{\max}(A_{t,\tau '})^2(t+1)^\mu + c_{0}\lambda_{\min}(A_{t,\tau '})\nu_{t+1}(t+1)^{\mu}{V}_{t}\mathbf{1}_{\left\| \theta_{t} - \theta^{*} \right\| > r_{0}}.
\end{align*}
Let \(\zeta_t :=\left(\frac{t+1}{t}\right)^\mu\left(\left(1+\frac{L_{\nabla F}C}{2}\nu_{t+1}^{2}\lambda_{\max}(A_{t,\tau '})^2\right) - c_0\nu_{t+1} \lambda_{\min}(A_{t,\tau '})\right)\), then
\begin{align*}
	\mathbb{E}\left[\widetilde{V}_{t+1}|\mathcal{F}_t\right] &\le \widetilde{V}_t + \frac{L_{\nabla F}C}{2}\nu_{t+1}^2\lambda_{\max}(A_{t,\tau '})^2(t+1)^\mu + \widetilde{V}_t\mathbf{1}_{\zeta_t>1} \\
	&  + c_{0}\lambda_{\min}(A_{t,\tau '})\nu_{t+1}(t+1)^{\mu} {V}_{t}\mathbf{1}_{\left\| \theta_{t} - \theta^{*} \right\| > r_{0}}.
\end{align*}
As \(\nu+\beta <1\) and with the help of equality \eqref{min::lambda}, it comes  that ${1}_{\zeta_t>1}$ converges almost surely to $0$. Moreover, since $\theta_{t}$ is strongly consistant,  $\mathbf{1}_{\left\| \theta_{t} - \theta^{*} \right\| > r_{0}}$ converges almost surely to $0$. Then, 
\[
\sum_{t\geq 1}\widetilde{V}_t\mathbf{1}_{\zeta_t>1} < + \infty \quad a.s. \quad \quad \text{and} \quad \quad \sum_{t \geq 1}\lambda_{\min}(A_{t,\tau '})\nu_{t+1}(t+1)^{\mu} {V}_{t}\mathbf{1}_{\left\| \theta_{t} - \theta^{*} \right\| > r_{0}} < + \infty \quad a.s.
\]
In addition, thanks to equality \eqref{maj::an} and since $ 2 \gamma +2\nu -\mu -2 > 1$, so that
\[
\sum_{t \geq 1} \nu_{t+1}^2\lambda_{\max}(A_{t,\tau '})^2(t+1)^\mu < + \infty  \quad a.s.
\]  
Applying  Robbins-Siegmund Theorem, it follows that $\tilde{V}_{t}$ converges almost surely to a random finite variable, i.e  
\[
F(\theta_{t})-F(\theta^{*}) = \mathcal{O}(t^{-\mu})
\]
for all \(\mu < 2\gamma+2\nu-3\). Due to the local strong convexity of \(G\) (Assumption \ref{ass1}), and since $\theta_{t,\tau}$ is a convex combination of $\theta_{k}$, $k=0, \ldots ,t$, it leads to 

\begin{equation}\label{prem::vit}
\|\theta_t-\theta^{*}\|^2 = \mathcal{O}(t^{-\mu}) \quad a.s \quad \quad \text{ and } \quad \quad \|\theta_{t,\tau '}-\theta^{*}\|^2 = \mathcal{O}(t^{-\mu}) \quad a.s.
\end{equation}
\end{proof}

\subsection{Proof of Lemma \ref{lem::eig0}}\label{sec::lem::eig0}
We denote $\beta'_{t} = \beta_{1} t^{\frac{1-\gamma}{4}}$ for all $t\ge0$. With the same expression of $A_{t+1}$ that we have seen in \eqref{decompositionAt}, we can prove that
\begin{align*}
	\lambda_{\min}(A_{t+1}) \ge \lambda_{\min}(A_t)\pa{1-\gamma_{t+1}\beta'_{t+1}}+\gamma_{t+1}-\gamma_{t+1}\pa{1+\lambda_{\min}(A_t)\norm{W_t}_F}\mathbf{1}_{\acco{\norm{Q_t}_F>\beta'_{t+1}}}.
\end{align*}
By induction, we have for all $t\ge1$ that
\begin{align*}
	\lambda_{\min}(A_{t}) \ge \prod_{j=1}^{t}\pa{1-\gamma_{j}\beta_{j}'}\lambda_{\min}(A_0)+\sum_{k=1}^{t}\prod_{j=k+1}^t(1-\gamma_j\beta'_{j})\gamma_{k} -\mathcal{V}_t,
\end{align*}
where
$$\mathcal{V}_t:=\sum_{k=1}^{t}\prod_{j=k+1}^t(1-\gamma_j\beta'_{j})\gamma_{k}\pa{1+\lambda_{\min}(A_{k-1})\norm{Q_{k-1}}_F}\mathbf{1}_{\acco{\norm{Q_{k-1}}_F>\beta'_{k}}}.$$
In addition, $\mathcal{V}_{t} = \mathcal{V}_{t}' + \mathcal{M}_{t} $ with
\begin{align*}
\mathcal{V}_t '&:=\sum_{k=1}^{t}\prod_{j=k+1}^t(1-\gamma_j\beta'_{j})\gamma_{k} \mathbb{E}\left[ \pa{1+\lambda_{\min}(A_{k-1})\norm{W_{k-1}}_F}\mathbf{1}_{\acco{\norm{W_{k-1}}_F>\beta'_{k}}} |\mathcal{F}_{k-1} \right] , \\
\mathcal{M}_t& :=\sum_{k=1}^{t}\prod_{j=k+1}^t(1-\gamma_j\beta'_{j})\gamma_{k} \mathcal{E}_{k}
\end{align*}
and $\mathcal{E}_{k}  =   \pa{1+\lambda_{\min}(A_{k-1})\norm{Q_{k-1}}_F}\mathbf{1}_{\acco{\norm{Q_{k-1}}_F>\beta'_{k}}}    - \mathbb{E}\left[ \pa{1+\lambda_{\min}(A_{k-1})\norm{Q_{k-1}}_F}\mathbf{1}_{\acco{\norm{Q_{k-1}}_F>\beta'_{k}}} |\mathcal{F}_{k-1} \right]  $ is a sequence of martingale differences. Then, applying Theorem 6.1 in \cite{cenac2020efficient}, one has since $\| A_{t} \|_{F} = O(1)$ a.s. (see equation \eqref{maj::At::O1}), 
\[
\mathcal{M}_{t}^{2} = O \left( \frac{\gamma_{t}}{\beta_{t}'} \right) \quad a.s.
\]
and this term is negligible since $\frac{\gamma}{2} - \frac{1-\gamma}{8} > \frac{1-\gamma}{4}$ (since $\gamma > 3/7$).
In addition, following the same reasoning as for the upper bound of $R_{2,t}$ and since we now know that $\| A_{t} \|_{F} = O (1)$ a.s. (see equation \eqref{maj::At::O1} in the proof of Lemma \ref{lem4}), one has
\[
\mathbb{E}\left[ \pa{1+\lambda_{\min}(A_{t-1})\norm{Q_{t-1}}_F}\mathbf{1}_{\acco{\norm{Q_{t-1}}_F>\beta'_{t}}} |\mathcal{F}_{t-1} \right] = O \left( t^{-p\beta ' /2} \right) a.s.
\]
and applying Lemma 6.1 in \cite{godichon2024online}, it comes that for any $a_{p} < p \beta '/2$,
\[
\mathcal{V}_{t}' = o \left( t^{-a_{p}} \right) \quad a.s
\]
which is negligible as soon as $p>2$.

Finally, 
\begin{align*}
	\sum_{k=1}^{t}\prod_{j=k+1}^t(1-\gamma_j\beta'_{j})\gamma_{k} &\ge \sum_{k=1}^{t}\frac{1}{\beta'_k}\prod_{j=k+1}^t(1-\gamma_j\beta'_{j})\gamma_{k}\beta'_{k} \\
	&=\sum_{k=1}^{t}\frac{1}{\beta'_k}\pa{\prod_{j=k+1}^t(1-\gamma_j\beta'_{j})-\prod_{j=k}^t(1-\gamma_j\beta'_{j})}\\
	&\ge \frac{1}{\beta_{t}'}\pa{1-\prod_{j=1}^t(1-\gamma_j\beta'_{j})}\\
	&\ge \frac{\gamma_1\beta_1}{\beta_{t}'}.
\end{align*}
Since $\prod_{j=0}^{t}\pa{1-\gamma_{t}\beta_{t}'}\lambda_{\min}(A_0) \ge 0$, we have
$$\frac{1}{\lambda_{\min}(A_t)} = \mathcal{O}(\beta'_{t})= \mathcal{O}(t^{\frac{1-\gamma}{4}})\quad a.s.$$

\subsection{Proof of Theorem \ref{theo::rate::stream}}
The proof is analogous to the one of Theorem \ref{theo::rate}. We so just give the main difference here. Observe that in this case, $W_{t} = ng_{t+1}\left( \theta_{t,\tau} \right) g_{t+1} \left( \theta_{t,\tau} \right)^{T}$ and $g_{t+1}\left( \theta_{t,\tau} \right) := \frac{1}{n}   \sum_{i=1}^{t} \nabla_{\theta}f \left( X_{t+1},\theta_{t,\tau} \right)$. We so have  to consider the filtration defined for all $t \geq 1$ by $\mathcal{F}_{t} = \sigma \left( X_{1,1} , \ldots , X_{1,n} , \ldots ,X_{t,1} , \ldots ,X_{t,n} \right)$.

\paragraph*{New values of $\tilde{R}_{1,t}$ and $\tilde{R}_{2,t}$.} Observe that in the streaming case, one has
\begin{align*}
  \mathbb{E}\cro{\norm{A_tW_t-I_d}^2_F\mathbf{1}_{\acco{\norm{Q_t}_F\le\beta_{t+1}}}| \mathcal{F}_t} & 
 &\leq 2n^{2}\| A_{t}\|_{F}^{4} \mathbb{E}\left[ \| g_{t+1}\pa{\theta_{t,\tau}} g_{t+1}\pa{\theta_{t,\tau}}^{T} \|^2_{F} |\mathcal{F}_{t} \right] + 2d.
\end{align*}
and with the help of Assumption \ref{ass::moment}
\begin{align*}
\mathbb{E}\left[ \left\| g_{t+1}\left( \theta_{t,\tau} \right) g_{t+1} \left( \theta_{t,\tau} \right) \right\|_{F}^{2} |\mathcal{F}_{t} \right] & \leq  \mathbb{E} \left[ \left\| g_{t+1} \left( \theta_{t,\tau} \right) \right\|^{4} |\mathcal{F}_{t} \right] \\
& \leq \left( \frac{1}{n} \sum_{i=1}^{n} \left( \mathbb{E} \left[ \left\|  \nabla_{\theta}\left(f(X_{t+1,i},\theta_{t,\tau}\right) \right\|^{4} |\mathcal{F}_{t}	 \right] \right)^{\frac{1}{4}} \right)^{4} \\ &\leq C_{4} + C_{4}\left( F \left( {\theta}_{t,\tau} \right) - F\left(\theta^{*} \right) \right)^{2}
\end{align*}
Then, in the streaming case, $\tilde{R}_{1,t}$ and $\tilde{R}_{2,t}$ defined by \eqref{def::R1::tilde} and \eqref{def::R2::tilde} have to be replaced by
\begin{align}
  &  \tilde{R}_{1,t} :=   n \left(C_{4}+C_{4} \| \theta_{t,\tau} - \theta^{*} \|^{4} \right)  \frac{16}{\lambda_{\min}^2(\Sigma)} \\
& \tilde{R}_{2,t} :=  n   \left(C_{4}+C_{4} \| \theta_{t,\tau} - \theta^{*} \|^{4} \right)  \frac{16}{\lambda_{\min}(\Sigma)^2}d +2d  +  2\| A_{t} \|_{F}^{4} \mathbf{1}_{\lambda_{\min}(\Sigma_{t}) <  \lambda_{\min}(\Sigma)/2}\left(C_{4} + C_{4}\| {\theta}_{t,\tau} - \theta^{*} \|^4\right) .
\end{align}

\paragraph*{New values in the upper bound of $K_{2,t}$. }
Analogously, the only difference there is that  
\[
\mathbb{P}\left[ \left\| Q_{t} \right\|_{F} > \beta_{t+1} |\mathcal{F}_{t} \right] \leq \frac{n^{p}  \left\| A_{t} \right\|^{p} \left( C_{p} + C_{p} \left(  F \left( \theta_{t,\tau} \right) - F \left( \theta^{*} \right) \right)^{p} \right)}{\beta_{t+1}^{p}} .
\]

\paragraph*{Main difference with the proof of Theorem \ref{theo::rate}.} The main difference results in $\mathbb{E} \left[ K_{1,t} | \mathcal{F}_{t} \right]$. Indeed, in the streaming case, since $X_{t+1,i}$ and $X_{t+1,j}$ are independent as soon as $i \neq j$, it comes
\begin{align*}
\mathbb{E}\left[ n g_{t+1}\left( \theta_{t,\tau} \right) g_{t+1}\left( \theta_{t,\tau} \right)^{T} |\mathcal{F}_{t} \right] & = \frac{1}{n}\sum_{i=1}^{n}\mathbb{E}\left[ \nabla_{\theta}f\left( X_{t+1,i}, \theta_{t,\tau} \right)\nabla_{\theta}f\left( X_{t+1,i}, \theta_{t,\tau} \right)^{T} |\mathcal{F}_{t} \right] \\
&  + \frac{1}{n}\sum_{i=1}^{n}\sum_{j \neq i} \mathbb{E}\left[ \nabla_{\theta}f\left( X_{t+1,i}, \theta_{t,\tau} \right)\nabla_{\theta}f\left( X_{t+1,j}, \theta_{t,\tau} \right)^{T} |\mathcal{F}_{t} \right] \\
&  = \Sigma_{t} + (n-1) \nabla F \left( \theta_{t,\tau }\right) \nabla F \left( \theta_{t,\tau} \right)^{T}.
\end{align*}
Then, in the streaming case, one has
\[
\mathbb{E}\left[ K_{1,t} |\mathcal{F}_{t} \right] \geq \left\langle A_{t} \Sigma_{t} \tilde{D}_{t}, \tilde{D}_{t} \right\rangle_{F} + (n-1) \left\langle A_{t} \Sigma_{t}   A_{t} \nabla F\left( \theta_{t,\tau} \right)\nabla F\left( \theta_{t,\tau} \right)^{T}A_{t}   , \tilde{D}_{t}  \right\rangle_{F}  
\]
Following the same reasoning as in the proof of Theorem \ref{theo::rate}, for all $\mu < 2\gamma +2\mu -3$, 
\[
\left\| \theta_{t} - \theta^{*} \right\|^{2} = o \left( \frac{1}{t^{\mu}} \right) \quad a.s \quad \quad\text{ and } \quad \quad \left\| \theta_{t,\tau} - \theta^{*} \right\|^{2} = o \left( \frac{1}{t^{\mu}} \right) \quad a.s
\] 
and since $\nabla F$ is $L_{\nabla F}$ Lispchitz, 
\[
\left\| \nabla F \left(  \theta_{t} \right) \right\|^{2} = o \left( \frac{1}{t^{\mu}} \right) \quad a.s \quad \quad\text{ and } \quad \quad \left\|  \nabla F \left(  \theta_{t,\tau}  \right) \right\|^{2} = o \left( \frac{1}{t^{\mu}} \right) \quad a.s.
\]
In addition, for all $  \mu '  > 0$, one has
\begin{align*}
\gamma_{t+1}\left| \left\langle A_{t} \Sigma_{t}   A_{t} \nabla F\left( \theta_{t,\tau} \right)\nabla F\left( \theta_{t,\tau} \right)^{T}A_{t}   , \tilde{D}_{t}  \right\rangle_{F} \right|   &\leq      \left( 1+ \frac{1}{t^{1+\mu '}} \right) \left\| \tilde{D}_{t} \right\|_{F}^{2} 
\\ &\quad+ \underbrace{ t^{1+\mu '}  \gamma_{t+1}^{2}  \left\| A_{t} \right\|_{F}^{6} \left\| \Sigma_{t} \right\|_{F} \left\|  \nabla F \left( \theta_{t,\tau} \right) \right\|^{4} }_{=: R_{n,t}}
\end{align*}

Then,
\[
R_{n,t} = o \left( \frac{1}{t^{8\gamma - 7 + 2\mu - \mu ' } }\right) \quad a.s
\]
Taking $\mu > 4-4\gamma$ (since $\mu < 2 \gamma +2\nu -3$, this is possible as soon as $6\gamma +2\nu > 7$) and $\mu ' < 8\gamma - 7 + 2\mu- 1$, one has
\[
\sum_{t \geq 1} R_{n,t}  < + \infty \quad a.s.
\]

\paragraph*{Conclusion} One can so rewrite  the upper bound of $\mathbb{E}\left[ \left\| D_{t+1} \right\|_{F}^{2} |\mathcal{F}_{t} \right] $ given by \eqref{updt} can be replaced by 
\[
\mathbb{E}\left[ \left\| D_{t+1} \right\|_{F}^{2} |\mathcal{F}_{t} \right] \leq  \left( 1+ S_{1,t} \right) \left\| D_{t} \right\|_{F}^{2} + S_{2,t} - \gamma_{t+1} \left\langle A_{t} \Sigma_{t} \tilde{D}_{t}, \tilde{D}_{t} \right\rangle_{F}
\]
with
\begin{align*}
S_{1,t} &  =    \left( \frac{1}{t^{1+\mu '}}+  16\gamma_{t+1}^{2}\beta_{t+1}^{2} \left( R_{0,t} + 2R_{1,t} \right) + 8\left( R_{0,t}+ 2R_{1,t} \right) \right)\left( 1+ R_{3,t} \right) + R_{3,t}  \\
S_{2,t}&  =  R_{n,t}+  16\gamma_{t+1}^{2}\beta_{t+1}^{2} \left( \tilde{R}_{0,t} + 2R_{2,t} \right) +  8\left( \tilde{R}_{0,t}+ 2R_{2,t} \right) + R_{4,t} + R_{5,t}
\end{align*}
and conclude as in the proof of Theorem \ref{theo::rate}.

\bibliographystyle{apalike}
\bibliography{Biblio_Rapport}

\end{document}